\newtheorem{theorem}{Theorem}[section]
\newtheorem{lemma}[theorem]{Lemma}
\theoremstyle{definition}
\newtheorem{definition}[theorem]{Definition}
\newtheorem{remark}{Remark}
\title[Computer Assisted Proofs of Attracting Invariant Tori for ODEs] %Use the shortened version of the full title
      {Computer Assisted Proofs of Two-Dimensional Attracting Invariant Tori for ODEs}
\author[M.J. Capi\'nski, E. Fleurantin and J.D. Mireles James]{}
\subjclass{34C45, % "Invariant manifolds" (MSC2010)
 70K43, % "Quasi-periodic motions and invariant tori" (MSC2010)
 37G35, % "Attractors and their bifurcations" (MSC2010)
 65P20, % "Numerical chaos" (MSC2010)
 65G20 % "Algorithms with automatic result verification" (MSC2010)
 }
 \keywords{Attracting invariant tori, computer assisted proof, dynamics of ordinary differential equations}
 \email{maciej.capinski@agh.edu.pl}
 \email{efleurantin2013@fau.edu}
 \email{jmirelesjames@fau.edu}
\thanks{The first author is supported by by the NCN grant  2018/29/B/ST1/00109 and by the Faculty of Applied Mathematics AGH UST statutory tasks 11.11.420.004 within subsidy of Ministry of Science and Higher Education. The work has been conducted during the visit to FAU sponsored by the Fulbright Foundation. 
The third author was partially supported by  National Science Foundation grant DMS 1813501. }
\thanks{$^*$ Corresponding author: maciej.capinski@agh.edu.pl }
\newcommand{\correction}[2]{#2}
\begin{document}
%\input{00_corrections}

%\marginpar{Jay, could you take care of the 2010 MSC and keywords? Thanks! M.}
\maketitle

% Enter the first author's name and address:
\centerline{\scshape Maciej J. Capi\'nski$^*$}
\medskip
{\footnotesize
% please put the address of the first author
 \centerline{AGH University of Science and Technology}
  % \centerline{Other lines}
   \centerline{ al. Mickiewicza 30, 30-059 Krak\'ow, Poland }
} % Do not forget to end the {\footnotesize by the sign }

\medskip

\centerline{\scshape Emmanuel Fleurantin and J.D. Mireles James}
\medskip
{\footnotesize
 % please put the address of the second  and third author
 \centerline{ Florida Atlantic University}
   %\centerline{Other lines}
   \centerline{777 Glades Rd., Boca Raton, FL 33431, USA}
}

\bigskip

% The name of the associate editor will be entered by an editorial staff
% "Communicated by the associate editor name" is not needed for special issue.
 \centerline{(Communicated by the associate editor name)}

%The abstract of your paper
%\marginpar{changed `applies to explicit application problems' to `applies to explicit problems'.}
\begin{abstract}
This work studies existence and regularity questions for attracting invariant tori in three dimensional dissipative systems of ordinary differential equations. Our main result is a constructive method of computer assisted proof which applies to explicit problems in non-perturbative regimes.  We obtain verifiable lower bounds on the regularity of the attractor in terms of the ratio of the expansion rate on the torus with the contraction rate near the torus.  We consider separately two important cases of rotational and resonant tori.  
In the rotational case we obtain $C^k$ lower bounds on the regularity of the embedding.  
In the resonant case we verify the existence of tori which are only $C^0$ and neither star-shaped nor Lipschitz.   
\end{abstract}

%TCIDATA{Version=5.00.0.2606}
%TCIDATA{LaTeXparent=0,0,MMFedit.tex}

\section{Introduction} \label{sec:intro}
Questions about the existence, topology, and regularity
of invariant sets have organized the qualitative theory of 
nonlinear dynamics since the foundational work of Poincar\'e at the 
end of the Nineteenth Century.  In modern times numerical simulations 
play a crucial role in this theory, providing deeper insights into the
fine structure of phase space than can be obtained by any other means.  
The digital computer
has emerged as a kind of dynamical systems laboratory, where one runs
experiments on nonlinear systems far from a trivial solution or other 
perturbative regime.     

In response to this development last four decades have seen 
a number of researchers put
tremendous energy into developing and deploying computer assisted
methods of proof which bridge the gap between numerical conjecture
and mathematically rigorous theorems.    The 
 work of Lanford, Eckman, and Collet on the computer assisted 
proof of the Feigenbaum Conjectures \cite{lanford_CAP,compProofFeig},
and the resolution of Smale's 14th problem by Tucker \cite{tucker1,tucker2}
provide excellent examples of this trend.
The recent review articles \cite{reviewCAP,MR3990999} provide historical context 
and more complete discussion of the literature.

The present work focuses on computer assisted methods of proof for 
attracting invariant tori in dissipative vector fields.  
Invariant tori typically appear in systems where there are two or more competing 
natural frequencies.  Two common mechanisms are periodic/quasi-periodic 
perturbations of a system with an attracting periodic orbit, and when a 
periodic orbit with complex conjugate Floquet multipliers loses stability -- 
triggering a 
\correction{comment 17}{Neimark-Sacker} %% 
bifurcation in a Poincar\'e section
\cite{MR0132256,MR2615427}.  Both situations are treated in the 
present work.  Some classic references on dissipative dynamical systems 
having invariant tori are 
\cite{MR1005055,MR1115870,MR709899,MR880159,MR1488520,MR3435117,MR3279518,MR1093209,Langford},
and we refer also to the works of 
\cite{Haro1,Haro2,MR2299977,MR3713932,MR3713933,MR3309008}
for a functional analytic approach to this topic.  
Of course the study of robust invariant manifolds, or \textit{normally 
hyperbolic invariant manifolds} (NHIMs) goes back to the classic works 
of Fenichel \cite{MR287106}, and of Hirsch, Pugh, and Shub \cite{MR0271991,MR292101,MR0501173}.
See also the works of 
\cite{MR754826,MR791842,MR1391508,MR1460262,MR2136745,MR3751167}
and the references therein for some numerical investigations of NHIMs.

Related techniques for computer assisted proof of invariant tori are 
found in the works of  
\cite{celettiCAP1,cellettiCAP2,rana1,rana2,CAPmodernKAM,Haro_aPosKAM},
and the references therein.
It should be remarked that the works just cited deal with analytic invariant 
KAM tori in symplectic/Hamiltonian systems, where the torus 
cannot be attracting and the dynamics 
on the torus are conjugate to a Diophantine irrational rotation.
The present work deals with attracting invariant 
tori in dissipative systems. These objects are necessarily of 
lower regularity \cite{Llave} -- $C^k$ sometimes with $0 \leq k < \infty$ -- 
and computer assisted existence proofs require  
different strategies.

Our analysis is formulated in terms of topological and geometric hypotheses 
which we check using mathematically rigorous computational 
techniques for numerical integration of vector fields and their variational 
equations.  To make the presentation 
as self contained as possible we focus on the case of 3D fields and include 
elementary proofs of our arguments.
We implement our method in two illustrative examples.  The first example
is a periodic perturbation of a planar vector field where the unperturbed
system has an attracting periodic orbit.  Here we prove the existence
of $C^k$ invariant tori with rotational dynamics.  The second example is a 
is an autonomous vector field where resonant invariant tori appear naturally after a 
 \correction{comment 17}{Neimark-Sacker bifurcation}. 

The two situations require different analysis.  
In the rotational case we develop an \textit{outer approximation} 
of the torus via coverings by polygons and cone conditions. The union of 
the polygons is eventually shown to contain a torus.  
In the case of the autonomous vector field, where the invariant torus appears in a 
 \correction{comment 17}{Neimark-Sacker bifurcation}, the tori we consider 
 are resonant.  This means that they 
can be decomposed into attracting and saddle periodic orbits, where the unstable 
manifold of the saddle is absorbed completely into a trapping neighborhood 
of the attracting orbit.  In the resonant case we provide an \textit{inner approximation}, 
in the sense that we build the torus out of invariant pieces whose union  
is shown to be the desired torus.  
In both the rotational and resonant cases we study the tori away from the 
perturbative case.  
We remark that, because our theoretical arguments are formulated for maps 
(in our case Poincar\'{e} maps),  
our implementations rely heavily on the validated 
$C^k$ integrators developed over the last decade by Wilczak and Zgliczy\'nski
\cite{cnLohner,c1Lohner}.

A technical remark is that our analysis of the rotational tori 
is formulated for a star-shaped region in an appropriate surface of section.
The star-shaped hypothesis is an implementation detail which allows us to proceed without 
making a technical digression into the setting of vector bundles. 
Nevertheless, we indicate in Section \ref{sec:generalization}
how to proceed more generally. 
\correction{comment 35}{
A second technical remark is that the torus may be smoother than we are actually
able to prove.   Put another way, we prove that the rotational torus is \textit{at least} $C^k$ though it may in fact 
be smoother.  We do not claim that our regularity results are sharp.}
On the other hand the resonant tori we study are globally only $C^0$, and in this case the 
regularity is sharp as the tori are not globally Lipschitz.  

The remainder of the paper is organized as follows.
In Section \ref{sec:prelims} we review some preliminary 
notions and definitions from dynamics and validated numerics.  
In Section \ref{sec:contractiing-maps}
we state and prove our main theorem on 
the existence of attracting invariant Lipschitz curves, and investigate conditions which imply their differentiability. 
%\marginpar{reformulated here since we do not prove $C^k$ smoothness but call upon external results (I formulated the word `prove')}
%We discuss also the computer assisted validation of the hypotheses 
%of these theorems.

Section \ref{sec:homoclinic-maps} treats computer assisted 
methods of validation for the existence of homoclinic/heteroclinic
orbits for planar maps.  More explicitly we develop techniques for
 proving the existence of attracting fixed points and obtaining
lower bounds on the size of the basin of attraction.  Then 
we recall some tools for 
validating bounds on the local stable/unstable manifolds attached to
saddle fixed points from \cite{Cap-Lyap,Zgliczynski-cone-cond}.
%\marginpar{reformulated here since we do not give proofs of $W^u$ but just state results from these papers}
Finally we prove the existence of heteroclinic connections from the 
saddle to the attractor.  When these techniques are applied in a 
Poincar\'e section for an ODE we obtain connections between periodic 
orbits of the differential equation.

In Section \ref{sec:tori-3d} we show how to apply the methods of Section  \ref{sec:homoclinic-maps}
to prove the existence of invariant tori for ODEs.   The main idea is to propagate an invariant circle from a Poincar\'e section by the flow of the ODE.
%\marginpar{reformulated the last sentence.}  

Section \ref{sec:examples} is devoted to the implementation 
\correction{comment 8}{of} %%% here I inserted the first correction, this was just change  from "or" to "of".
 our methods
in two example applications.  We consider a periodically forced Van der Pol 
equation where the natural attracting periodic orbit in the unforced system 
gives an attracting invariant torus after the application of the forcing. 
Here we prove the existence of $C^k$ invariant tori.
We also consider an autonomous differential equation with an 
attracting resonant tori.  There is an attracting periodic orbit in the 
invariant the torus which has complex conjugate multipliers, 
hence the torus is only $C^0$.

%TCIDATA{Version=5.00.0.2606}
%TCIDATA{LaTeXparent=0,0,MMFedit.tex}

\section{Preliminaries} \label{sec:prelims}

For a set $A$ we write $\overline{A}$ to denote its closure,
$\mathrm{int}A$ to denote its interior, and write $\mathbb{S}^{1}$ for a
one dimensional circle. 
\correction{comment 18}{Throughout the paper, for $x\in \mathbb{R}^n$ we shall use $\| x \|$ to stand for the Euclidean norm.}
Let $B(p,r)$ denote the
open ball of radius $r$ centered at $p$.

Suppose that $f:\mathbb{R}^{n}\rightarrow \mathbb{R}^{n}$ 
is a diffeomorphism and let $%
p^{\ast }\in \mathbb{R}^{n}$ be a hyperbolic fixed point of $f$ (i.e. the
eigenvalues of $Df(p^{\ast })$ are \correction{comment 9}{not on} the unit circle). We shall
use the notation $W^{u}(p^{\ast })$ and $W^{s}(p^{\ast })$ to stand for the
unstable and the stable manifold of $p^{\ast }$, respectively, i.e. 
\begin{eqnarray*}
W^{u}\left( p^{\ast }\right)  &=&\left\{ p:\left\Vert f^{n}(p)-p^{\ast
}\right\Vert \rightarrow 0\text{ as }n\rightarrow -\infty \right\} , \\
W^{s}\left( p^{\ast }\right)  &=&\left\{ p:\left\Vert f^{n}(p)-p^{\ast
}\right\Vert \rightarrow 0\text{ as }n\rightarrow \infty \right\} .
\end{eqnarray*}

For $f:\mathbb{R}^{n}\rightarrow\mathbb{R}^{n}$ and $B\subset\mathbb{R}^{n}$
define $\left[ Df\left( B\right) \right] \subset\mathbb{R}^{n}\times%
\mathbb{R}^{n}$ as%
\begin{equation*}
\left[ Df\left( B\right) \right] :=\left\{ \left( a_{ij}\right)
_{i,j=1}^{n}:a_{ij}\in\left[ \inf_{p\in B}\frac{\partial f_{i}}{\partial
x_{j}}\left( p\right) ,\sup_{p\in B}\frac{\partial f_{i}}{\partial x_{j}}%
\left( p\right) \right] \text{for }i,j=1,\ldots,n\right\} . 
\end{equation*}
We refer to $[ Df\left( B\right) ] $ as the interval enclosure of the derivative of $f$ on $B$,
and write $Id$ for the identity matrix.

For an interval matrix $\mathbf{A}$, i.e. a set $\mathbf{A}\subset$ 
\correction{comment 10}{$\mathbb{R}^{n \times n}$}, we will write%
\begin{equation*}
\left\Vert \mathbf{A}\right\Vert :=\sup\left\{ \left\Vert Ax\right\Vert
:\left\Vert x\right\Vert =1,A\in\mathbf{A}\right\} . 
\end{equation*}
We say that $\mathbf{A}$ is invertible if each $A\in \mathbf{A}$ is
invertible. We define $\mathbf{A}^{-1}:=\{A^{-1}:A\in \mathbf{A}\}\subset 
\mathbb{R}^{n}\times\mathbb{R}^{n}$.

The following lemma is a version of the mean value theorem, which is useful in a number of places throughout the
paper.

\begin{lemma}
\label{lem:Df-difference}Let $f:\mathbb{R}^{n}\rightarrow\mathbb{R}^{n}$ be $%
C^{1}$, let $B\subset\mathbb{R}^{n}$ be a cartesian product of closed
intervals in $\mathbb{R}^{n}$ and let $p_{1},p_{2}\in B$, then we can choose
an $n\times n$ matrix $A\in\left[ Df\left( B\right) \right] $ for which we
will have 
\begin{equation*}
f\left( p_{1}\right) -f\left( p_{2}\right) =A\left( p_{1}-p_{2}\right) . 
\end{equation*}
with
\correction{comment 11}{
\begin{equation*}
A=\int_{0}^{1}Df\left( p_{2}+t\left( p_{1}-p_{2}\right) \right) dt. 
\end{equation*}}
%(The matrix $A$ depends on $p_{1},p_{2}$.)
\end{lemma}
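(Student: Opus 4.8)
The plan is to take the displayed integral formula as the \emph{definition} of $A$ and then verify the two required properties in turn. First I would set $\gamma(t) := p_{2}+t(p_{1}-p_{2})$ for $t\in[0,1]$. Since $B$ is a cartesian product of closed intervals it is convex, so $\gamma(t)\in B$ for every $t$, and because $f$ is $C^{1}$ the integrand $Df(\gamma(t))$ is continuous in $t$; hence the matrix $A=\int_{0}^{1}Df(\gamma(t))\,dt$, understood entrywise, is well defined.

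For the identity $f(p_{1})-f(p_{2})=A(p_{1}-p_{2})$, I would apply the fundamental theorem of calculus to the $C^{1}$ curve $g(t):=f(\gamma(t))$. By the chain rule $g'(t)=Df(\gamma(t))(p_{1}-p_{2})$, so
\[
f(p_{1})-f(p_{2})=g(1)-g(0)=\int_{0}^{1}g'(t)\,dt=\left(\int_{0}^{1}Df(\gamma(t))\,dt\right)(p_{1}-p_{2})=A(p_{1}-p_{2}),
\]
where the constant vector $p_{1}-p_{2}$ is pulled out of the componentwise integral.

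It then remains to check that $A\in[Df(B)]$. Writing $A=(a_{ij})$ we have $a_{ij}=\int_{0}^{1}\frac{\partial f_{i}}{\partial x_{j}}(\gamma(t))\,dt$, and since $\gamma(t)\in B$ for all $t\in[0,1]$ we get
\[
\inf_{p\in B}\frac{\partial f_{i}}{\partial x_{j}}(p)\le \frac{\partial f_{i}}{\partial x_{j}}(\gamma(t))\le \sup_{p\in B}\frac{\partial f_{i}}{\partial x_{j}}(p)
\]
for every $t$; integrating over the unit interval preserves these inequalities, so $a_{ij}$ lies in $\left[\inf_{p\in B}\frac{\partial f_{i}}{\partial x_{j}}(p),\ \sup_{p\in B}\frac{\partial f_{i}}{\partial x_{j}}(p)\right]$, which is exactly the condition defining $[Df(B)]$.

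The argument is essentially routine: it is simply a repackaging of the mean value theorem and the fundamental theorem of calculus in a form convenient for interval arithmetic. The only points that require any attention are that convexity of $B$ is what forces the whole segment $\gamma([0,1])$ to lie inside $B$ (so the interval-enclosure bounds apply along the entire path), and that the $C^{1}$ hypothesis makes the integrand continuous so that the chain-rule computation and the componentwise integral are legitimate; I do not anticipate a genuine obstacle.
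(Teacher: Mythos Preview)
Your argument is correct and is exactly the standard proof of this mean value inequality in integral form. The paper in fact does not supply a separate proof of this lemma: it states the integral formula for $A$ as part of the lemma itself and treats the result as a well-known version of the mean value theorem, so your write-up simply fills in the routine details the authors chose to omit.
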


%\begin{proof}
%We compute%
%\begin{align*}
%f\left( p_{1}\right) -f\left( p_{2}\right) & =\int_{0}^{1}\frac{d}{dt}%
%f\left( p_{2}+t\left( p_{1}-p_{2}\right) \right) dt \\
%& =\int_{0}^{1}Df\left( p_{2}+t\left( p_{1}-p_{2}\right) \right) dt\left(
%p_{1}-p_{2}\right) ,
%\end{align*}
%hence the result follows by taking 
%\begin{equation*}
%A=\int_{0}^{1}Df\left( p_{2}+t\left( p_{1}-p_{2}\right) \right) dt. 
%\end{equation*}
%This concludes our proof.
%\end{proof}

We use the following classical result.

\begin{theorem}
\label{th:interval-Newton}\cite{Al} (Interval Newton method) Let $f:\mathbb{R%
}^{n}\rightarrow\mathbb{R}^{n}$ be a $C^{1}$ function and $B$ be a cartesian
product of closed intervals in $\mathbb{R}^{n}$. If $[Df(B)]$ is invertible
and there exists an $x_{0}$ in $B$ such that%
\begin{equation*}
N(x_{0},B):=x_{0}-\left[ Df(B)\right] ^{-1}f(x_{0})\subset B, 
\end{equation*}
then there exists a unique point $x^{\ast}\in B$ such that $f(x^{\ast})=0.$
\end{theorem}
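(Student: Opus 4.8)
The plan is to establish uniqueness directly from the mean value equality of Lemma~\ref{lem:Df-difference}, and to obtain existence from Brouwer's fixed point theorem applied to a continuous pointwise Newton map assembled from the integral form of that same lemma.

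Uniqueness is the easy half. If $f(x_1)=f(x_2)=0$ with $x_1,x_2\in B$, then since $B$ is a cartesian product of closed intervals Lemma~\ref{lem:Df-difference} supplies a matrix $A\in[Df(B)]$ with $0=f(x_1)-f(x_2)=A(x_1-x_2)$. By hypothesis $[Df(B)]$ is invertible, so $A$ is invertible and $x_1=x_2$.

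For existence I would fix the point $x_0\in B$ from the hypothesis and, for $x\in B$, define
\[
A(x):=\int_0^1 Df\bigl(x_0+t(x-x_0)\bigr)\,dt .
\]
Convexity of $B$ keeps the segment $x_0+t(x-x_0)$ inside $B$, so each entry of $A(x)$ is an average over $B$ of the corresponding partial derivative of $f$; hence $A(x)\in[Df(B)]$ (this is exactly the integral form recorded in Lemma~\ref{lem:Df-difference}) and in particular $A(x)$ is invertible. Since $f$ is $C^1$, $Df$ is uniformly continuous on the compact set $B$, so $x\mapsto A(x)$ is continuous, and therefore $x\mapsto A(x)^{-1}$ is continuous as well by continuity of matrix inversion on the invertible matrices. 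Now set $g(x):=x_0-A(x)^{-1}f(x_0)$. Because $A(x)^{-1}\in[Df(B)]^{-1}$ we get $g(x)\in x_0-[Df(B)]^{-1}f(x_0)=N(x_0,B)\subset B$, so $g$ is a continuous self-map of the compact convex set $B$. Brouwer's fixed point theorem then gives $x^\ast\in B$ with $x^\ast=g(x^\ast)$, i.e.\ $A(x^\ast)(x^\ast-x_0)=-f(x_0)$; combining this with the identity $f(x^\ast)=f(x_0)+A(x^\ast)(x^\ast-x_0)$ from Lemma~\ref{lem:Df-difference} yields $f(x^\ast)=0$.

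This argument is short, so there is no single hard step; the only points requiring care are the bookkeeping between the set-valued objects $[Df(B)]$, $[Df(B)]^{-1}$ and the pointwise continuous selection $A(x)$ — specifically, that $A(x)$ really lies in the interval enclosure (so that it is invertible and $A(x)^{-1}f(x_0)$ lands in $[Df(B)]^{-1}f(x_0)$) and that $A(x)^{-1}$ varies continuously with $x$. It is worth noting that the hypothesis $N(x_0,B)\subset B$ is used only to make $g$ map $B$ into itself, and that no strict-inclusion or ``thin interval matrix'' assumption is needed, because a fixed point of $g$ is automatically a zero of $f$ regardless of whether it lies in $\mathrm{int}\,B$ or on $\partial B$.
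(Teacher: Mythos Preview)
Your argument is correct. The paper does not actually prove this theorem: it is quoted as a classical result with a citation to \cite{Al}, so there is no in-paper proof to compare against. Your use of the integral mean-value matrix $A(x)=\int_0^1 Df(x_0+t(x-x_0))\,dt$ from Lemma~\ref{lem:Df-difference} to build a continuous selection inside $[Df(B)]$, followed by Brouwer on $g(x)=x_0-A(x)^{-1}f(x_0)$, is a clean and standard route; the uniqueness half via invertibility of every $A\in[Df(B)]$ is likewise the expected one-line argument.
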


\begin{figure}[tbp]
\begin{center}
\includegraphics[height=5cm]{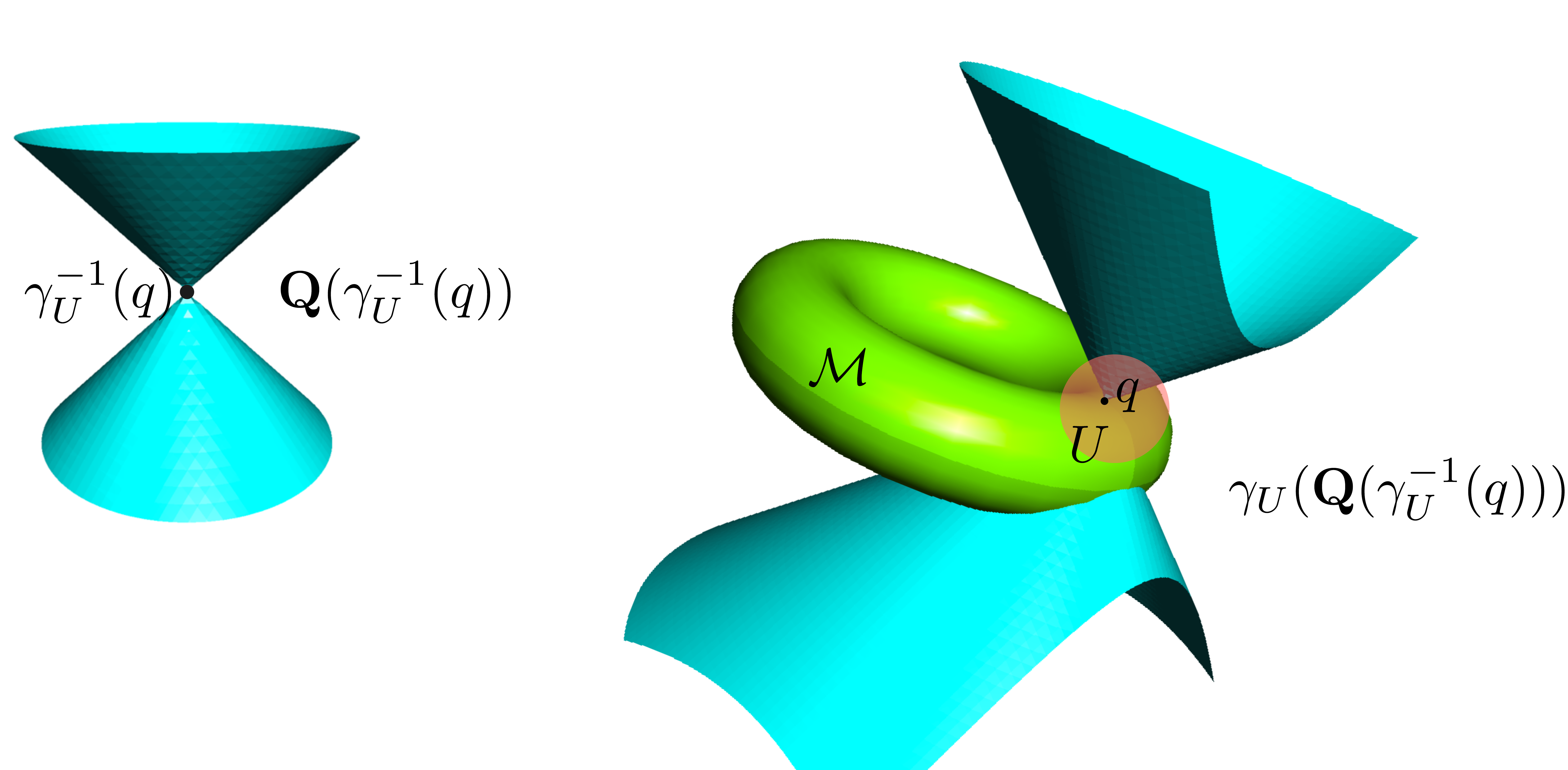}
\end{center}
\caption{On the left we have a cone attached at the point $\protect\gamma%
_U^{-1}(q)$ in the case when $k=2$ and $n=3$. Note that the cone is \emph{not%
} the blue (cone shaped) set. The cone $\mathbf{Q}(\protect\gamma_U^{-1}(q))$
is the complement of the blue set in $\mathbb{R}^3$; i.e. the white region
outside of the blue set. On the right we have an example of a Lipschitz
manifold. }
\label{fig:Lip-cone}
\end{figure}

%We now introduce the notion of a Lipschitz manifold in the context
%of this paper. 

The notion of a Lipschitz manifold requires us to define certain   
cone conditions. 
\correction{comment 23}{}\correction{comment 12}{}\correction{comment 22}{Fix $1 \leq k  \leq n$ and for 
a point $x=\left( x_{1},\ldots ,x_{n}\right) \in \mathbb{R}^{n}$ write 
$\pi_{x_1, \ldots, x_k}(x):=\left( x_1,\ldots ,x_{k}\right)$
and 
$\pi_{x_{k+1},\ldots ,x_{n}}(x) = (x_{k+1}, \ldots, x_n)$.
%$\pi _{n-k}x:=\left(
%x_{k+1},\ldots ,x_{n}\right) $. 
For a point $p\in \mathbb{R}^{n}$ we define
the cone attached to $p$ as (see Figure \ref{fig:Lip-cone})%
\begin{equation*}
\mathbf{Q}_{k} \left( p\right) :=\left\{ x\in \mathbb{R}^{n}:a\left\Vert 
\pi_{x_1, \ldots, x_k}\left( p-x\right) \right\Vert \geq \left\Vert \pi_{x_{k+1}, \ldots, x_n} \left(
p-x\right) \right\Vert \right\},
\end{equation*}
where $0<a \in \mathbb{R}$ is a fixed constant. We suppress the $k$ subscript and simply write $\mathbf{Q}(p)$ when 
$k$ is clear from context.
}
%For a point $%
%x=\left( x_{1},\ldots ,x_{n}\right) \in \mathbb{R}^{n}$ we shall write $\pi
%_{k}x:=\left( x_{1},\ldots ,x_{k}\right) $ and $\pi _{n-k}x:=\left(
%x_{k+1},\ldots ,x_{n}\right) $. For a point $p\in \mathbb{R}^{n}$ we define
%the cone attached to $p$ as (see Figure \ref{fig:Lip-cone})%
%\begin{equation*}
%\mathbf{Q}\left( p\right) :=\left\{ x\in \mathbb{R}^{n}:\left\Vert \pi
%_{k}\left( p-x\right) \right\Vert \geq \left\Vert \pi _{n-k}\left(
%p-x\right) \right\Vert \right\} .
%\end{equation*}

\begin{definition}
\label{def:lip-manifold} 
\correction{comment 24}{ Let $\mathcal{M} \subset \mathbb{R}^n$ be a $k$-dimensional
compact topological manifold. 
We say that $\mathcal{%
M}\subset \mathbb{R}^{n}$ is Lipschitz,
if it satisfies cone conditions in the following sense:}
 any point $p\in \mathcal{M}$
there exists an open neighborhood $U $ of $p$ in $\mathbb{R}^{n}$, an open
set $B\subset \mathbb{R}^{n}$, and a $C^{1}$ diffeomorphism $\gamma
_{U}:B\rightarrow U$ such that for any $q\in \mathcal{M} \cap U$ \correction{comment 21}{(see Figures \ref
{fig:Lip-cone}, \ref{fig:curve})}
\begin{equation*}
\mathcal{M}\cap U\subset \gamma _{U}\left( \mathbf{Q}\left( \gamma
_{U}^{-1}\left( q\right) \right) \cap B\right) .
\end{equation*}
\end{definition}

%TCIDATA{Version=5.00.0.2606}
%TCIDATA{LaTeXparent=0,0,MMFedit.tex}

\section{Attracting invariant circles for maps on $\mathbb{R}^{2}$\label%
{sec:contractiing-maps}}

In this Section we discuss how to establish the existence of attracting invariant curves
for planar maps. The methodology is based on taking a neighborhood of the
curve and validating that this neighborhood maps into itself. This on its own ensures
only existence of an invariant set, and not that the set is a curve. We
therefore consider two additional conditions. The first is that we have a
`well aligned cone field' which also maps into itself, and the second is that we
have uniform contraction inside of the considered set.

The proposed method is similar in spirit to \cite{CZ0,CZ}. The main difference is that 
the papers just mentioned work with a vector bundle around the manifold. In the 
present work we formulate our results in local coordinates that roughly 
cover (see Figure \ref{fig:curve}) the investigated invariant curve,
removing the need for vector bundle coordinates. 
\correction{comment 13}{
%This simplifies application of the method.
This simplifies the implementation of the method.  
}

\correction{comment 16}{We give our proof in the setting of closed star-shaped invariant curves. Our results can be directly generalized to the setting where we haver a vector bundle  based on a closed curve (not necessarily star-shaped) in $\mathbb{R}^2$. We present this generalization in Section \ref{sec:generalization}. We give our proofs in the star-shaped setting due to the simplicity of the setup. The arguments for the more general case of vector bundles are analogous.}

In Section \ref{sec:Lip-curves} we present the
method which ensures the existence of Lipschitz invariant curves, in Section %
\ref{sec:Ck} we add conditions which ensure the $C^{k}$ smoothness, and in
Section \ref{sec:Lip-curves-validation} we discuss how the 
assumptions are validated in practice.

\subsection{Establishing closed attracting star-shaped curves\label%
{sec:Lip-curves}}

Let%
\begin{equation*}
f:\mathbb{R}^{2}\rightarrow\mathbb{R}^{2}
\end{equation*}
be a $C^{1}$ diffeomorphism\footnote{%
For the purposes of this Section we could assume that $f$ is a
homeomorphism, however the validation of the required assumptions is easier %\correction{comment 14}{easier}
using the derivative of $f$.  This is why we assume $C^{1}$ smoothness
straightaway.}. Assume that $B_{1},B_{2}\subset\mathbb{R}^{2}$ are
homeomorphic to two dimensional open balls in $\mathbb{R}^{2}$ and that $%
\overline{B_{1}}\subset B_{2}$. Let $U:=\overline{B_{2}}\setminus B_{1}$ and
assume that 
\begin{equation}
U=\bigcup_{i=1}^{N}U_{i}=\bigcup_{i=1}^{N}\gamma_{i}\left( M_{i}\right) , \label{eq:U-set}
\end{equation}
\correction{comment 25}{
where  for each $i=1,\ldots,N$, the $M_{i}=\left[ -R_{i},R_{i}\right] \times\left[ -r_{i},r_{i}\right] $
for some fixed sequence of constants $0<r_{i},$ $R_{i}\in\mathbb{R}$, 
 and  $\gamma_i \colon M_i  \rightarrow \mathbb{R}^2$ are diffeomorphisms onto their image}
%$\gamma_{1},\ldots,\gamma_{N}:\mathbb{R}^{2}\rightarrow%
%\mathbb{R}^{2}$ are diffeomorphisms. 
(See Figure \ref{fig:curve}). We
think of $\gamma_{i}$ as local coordinates on the set $U$.

\begin{figure}[ptb]
\begin{center}
\includegraphics[height=4.5cm]{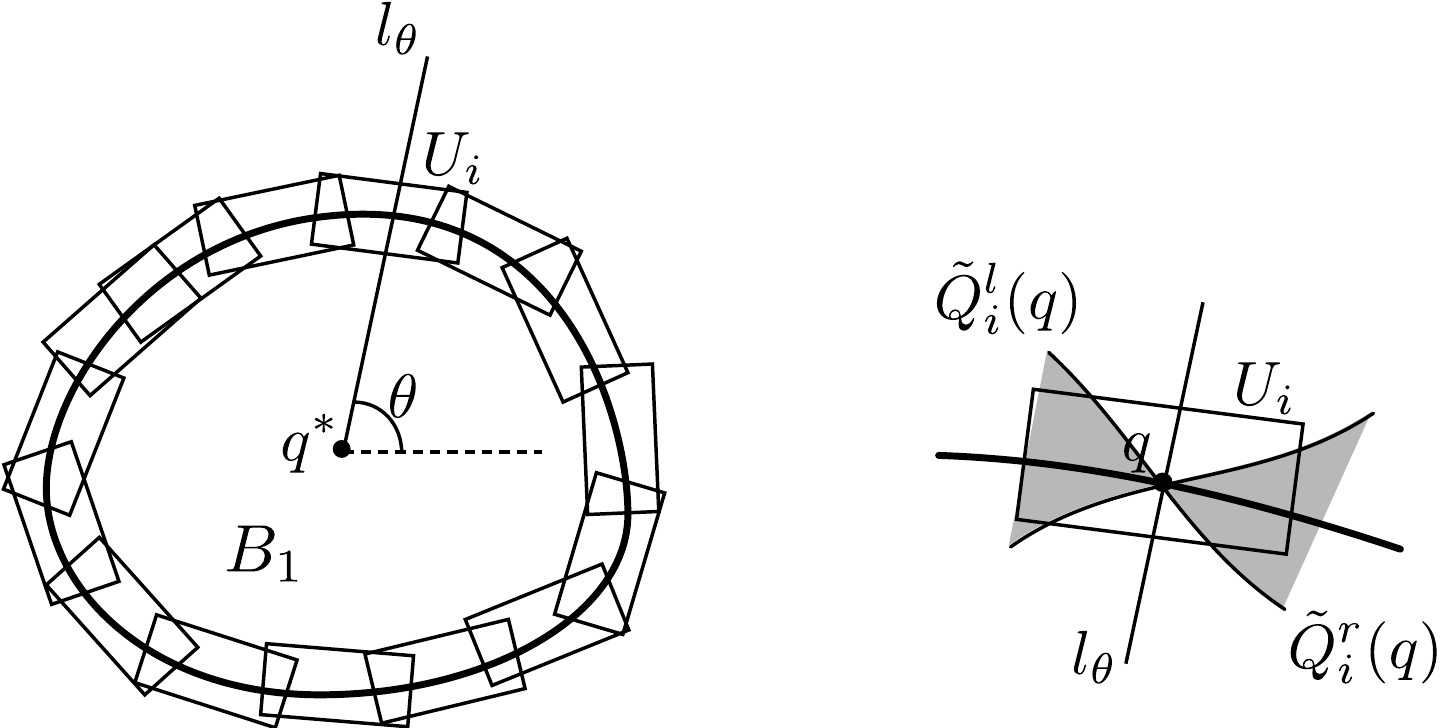}
\end{center}
\caption{The set $U$ is a collection of boxes, and we prove the 
existence of a star-shaped invariant closed
curve around $q^{\ast}$ which satisfies the cone conditions.}
\label{fig:curve}
\end{figure}

Our objective is to provide conditions ensuring the existence
of a star-shaped Lipschitz closed curve in $U$ homeomorphic to a circle and
invariant under $f$.

We equip each box $M_{i}$ with cones as follows. 
For $p\in\mathbb{R}^{2}$ define%
\begin{align}
Q_{i}\left( p\right) & =\left\{ \left( x,y\right) :\left\vert y-\pi
_{y}p\right\vert \leq a_{i}\left\vert x-\pi_{x}p\right\vert \right\} ,
\label{eq:cones-Lip-torus} \\
Q_{i}^{r}\left( p\right) & =Q_{i}\left( p\right) \cap\left\{ \left(
x,y\right) :x>\pi_{x}p\right\} ,  \notag \\
Q_{i}^{l}\left( p\right) & =Q_{i}\left( p\right) \cap\left\{ \left(
x,y\right) :x<\pi_{x}p\right\} ,  \notag
\end{align}
where $0<a_{1},\ldots,a_{N}\in\mathbb{R}$ are fixed constants. The
superscripts $r$ and $l$ stand for `right' and `left', respectively.
Define (see Figure \ref{fig:curve}) 
\begin{align*}
\tilde{Q}_{i}(q) & :=\gamma_{i}\left( Q_{i}\left( \gamma_{i}^{-1}(q)\right)
\right) , \\
\tilde{Q}_{i}^{\kappa}(q) & :=\gamma_{i}\left( Q_{i}^{\kappa}\left(
\gamma_{i}^{-1}(q)\right) \right) ,\qquad\text{for }\kappa\in\left\{
r,l\right\},
\end{align*}
and choose $q^{\ast}\in B_{1}\subset\mathbb{R}^{2}$.
(From now on the $q^{\ast}$ will remain fixed.) We define the half line
emanating from $q^{\ast}$ at an angle $\theta$ as%
\begin{equation*}
l_{\theta}:=\left\{ p\in\mathbb{R}^{2}:p=q^{\ast}+t\left( \cos\theta
,\sin\theta\right) \text{ for }t>0\right\} .
\end{equation*}

\begin{definition}\label{def:well-aligned-cones}
We say that the cones $Q_{i}$ are well aligned if for any $\theta\in
\lbrack0,2\pi),$ $i\in\left\{ 1,\ldots,N\right\} $ and $p\in\gamma_{i}^{-1}%
\left( U_{i}\cap l_{\theta}\right) $ we have%
\begin{equation*}
Q_{i}\left( p\right) \cap\gamma_{i}^{-1}(l_{\theta})=p,
\end{equation*}
and $\gamma_{i}^{-1}(l_{\theta})$ intersects $\left\{ y-\pi_{y}p=a_{i}\left(
x-\pi_{x}p\right) \right\} $ and $\left\{ y-\pi_{y}p=-a_{i}\left(
x-\pi_{x}p\right) \right\} $ transversally. (See Figure \ref%
{fig:well-aligned-cone}.)
\end{definition}

\begin{figure}[ptb]
\begin{center}
\includegraphics[height=2cm]{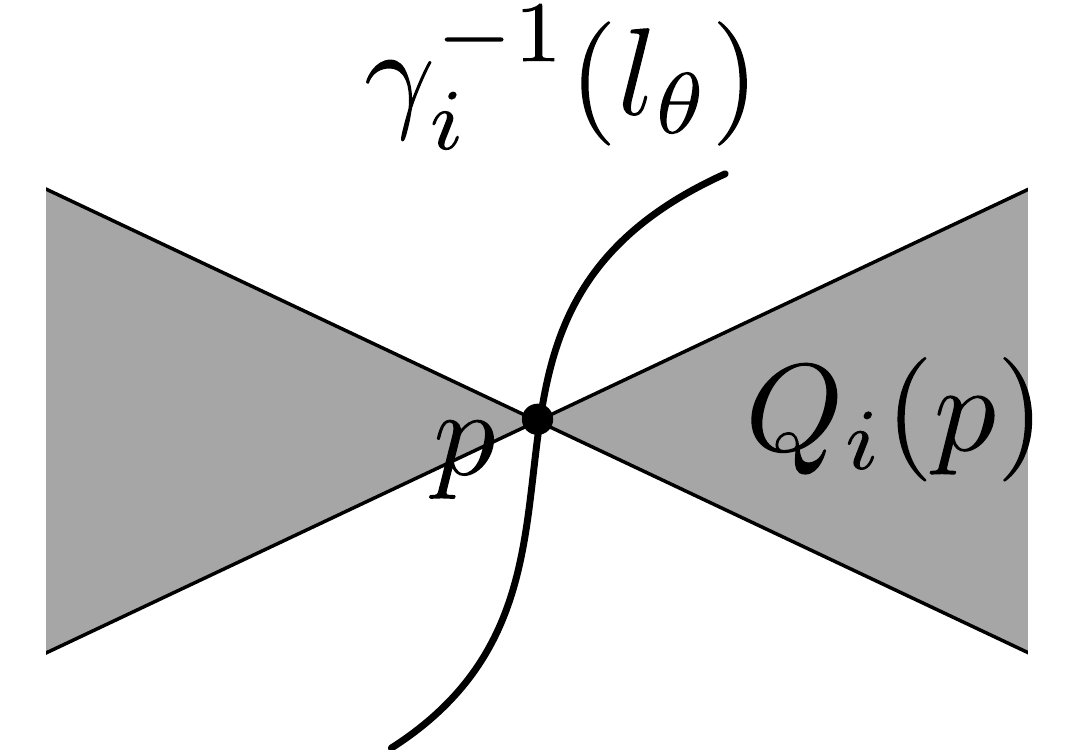}
\end{center}
\caption{A well aligned cone.}
\label{fig:well-aligned-cone}
\end{figure}

\begin{definition}
Let $h:\mathbb{S}^{1}\rightarrow\mathbb{R}^{2}$ be a continuous function. We
say that $h$ is a star-shaped closed curve around $q^{\ast}$ if %
\begin{equation*}
h\left( \mathbb{S}^{1}\right) \cap l_{\theta}=h\left( \theta\right),
\end{equation*}
\correction{comment 2}{for all $\theta \in \mathbb{S}^1$.}
\end{definition}

\begin{definition}\label{def:closed-curve-cone-cond}
We say that $h:\mathbb{S}^{1}\rightarrow\mathbb{R}^{2}$ is a
star-shaped closed curve
which satisfies cone conditions, if it is a closed curve around $q^{\ast}$,
and for any $\theta$ there exists an $i\in\left\{ 1,\ldots,N\right\} $ and $%
r>0$ such that (see Figure \ref{fig:curve})%
\begin{equation}
h\left( \mathbb{S}^{1}\right) \cap B\left( h\left( \theta\right) ,r\right)
\subset\tilde{Q}_{i}\left( h\left( \theta\right) \right) .  \label{eq:h-cc}
\end{equation}
\end{definition}

\begin{definition}
We say that $f$ satisfies cone conditions if for any $i\in\left\{
1,\ldots,N\right\} $ and any $p\in\mathrm{int}U_{i}$ there exists an $r>0$
and $j\in\left\{ 1,\ldots,N\right\} $ such that 
\begin{equation}
f\left( \tilde{Q}_{i}^{r}(p)\cap B(p,r)\right) \subset\tilde{Q}%
_{j}^{r}\left( f(p)\right) \quad\text{and\quad}f\left( \tilde{Q}%
_{i}^{l}(p)\cap B(p,r)\right) \subset\tilde{Q}_{j}^{l}\left( f(p)\right)
\label{eq:f-cc1}
\end{equation}
or%
\begin{equation}
f\left( \tilde{Q}_{i}^{l}(p)\cap B(p,r)\right) \subset\tilde{Q}%
_{j}^{r}\left( f(p)\right) \quad\text{and\quad}f\left( \tilde{Q}%
_{i}^{r}(p)\cap B(p,r)\right) \subset\tilde{Q}_{j}^{l}\left( f(p)\right) .
\label{eq:f-cc2}
\end{equation}
\end{definition}

\begin{definition}
Let $\mu$ be the Lebesgue measure on $\mathbb{R}^{2}$. We shall say that $f$
is uniformly attracting on $U$, if there exists a constant %
\correction{comment 26}{$0 \leq \lambda < 1$} such that for any Borel set $A\subset U$ we
have $\mu\left( f(A)\right)$ \correction{comment 26}{$\leq$} $\lambda\mu\left( A\right) $.
\end{definition}

We now formulate our main result with which we establish the existence
of attracting invariant curves.

\begin{theorem}
\label{th:lip-curve}Assume that the cones $Q_{i}$ are well aligned. Assume
also that there exists a sequence of points in $U$, such that the piecewise
affine circle which results from joining these points is a closed curve
which satisfies cone conditions around $q^{\ast}$. If $f$ is uniformly
contracting on $U$, and if $f$ satisfies cone conditions, and if $f\left( U\right)
\subset\mathrm{int}U$, then there exists a closed curve $h^{\ast}$ around $q^*$, which
satisfies cone conditions, such that $f\left( h^{\ast}\right) =h^{\ast}$.
\correction{comment 3}{ Moreover, for any $p\in U$, the orbit $\{f^{n}(p)\}_{n = 0}^\infty$ 
accumulates on the curve $h^{\ast}$.  That is, the $\omega$-limit set of 
the orbit is contained in $h^{\ast}$.}
\end{theorem}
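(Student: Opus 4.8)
The plan is to realize the invariant curve as the global attractor $\Lambda:=\bigcap_{n\geq 0}f^n(U)$, accessed through a limit of forward iterates of the given initial polygon.

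\emph{Step 1 (the attractor).} Since $f(U)\subset\mathrm{int}\,U\subset U$ and $U$ is compact, the sets $f^n(U)$ form a nested sequence of compact connected sets, so $\Lambda$ is nonempty, compact, connected, and $f(\Lambda)=\Lambda$; uniform contraction gives $\mu(f^n(U))\leq\lambda^n\mu(U)\to 0$, so $\mu(\Lambda)=0$, while $q^*\in B_1$ forces $q^*\notin U\supset\Lambda$. The $\omega$-limit inclusion is then immediate: for $p\in U$ one has $\{f^m(p):m\geq n\}\subset f^n(U)$, hence $\omega(p)\subset\bigcap_n f^n(U)=\Lambda$. It therefore suffices to produce a star-shaped closed curve $h^*$ around $q^*$ satisfying the cone conditions with $h^*(\mathbb{S}^1)=\Lambda$; then $f(h^*)=h^*$ holds automatically.

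\emph{Step 2 (iterating the curve).} The crucial lemma is that $f$ carries star-shaped closed curves around $q^*$ satisfying the cone conditions to star-shaped closed curves around $q^*$ satisfying the cone conditions, as long as the image stays in $U$. Given such an $h$, applying the cone conditions \eqref{eq:f-cc1}--\eqref{eq:f-cc2} for $f$ to the one-sided pieces $\tilde{Q}_i^{r},\tilde{Q}_i^{l}$ of the local cone at $h(\theta)$ shows that $f\circ h$ again satisfies \eqref{eq:h-cc} near each of its points, with the \emph{same} constants $a_i$; and the well-aligned hypothesis forces any closed curve satisfying the cone conditions to meet each $l_\theta$ transversally, after which a short degree argument shows it meets $l_\theta$ exactly once, i.e. is star-shaped (in particular simple). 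Applying this to the piecewise affine curve $h_0$ from the hypothesis and putting $h_n(\theta):=f^n(h_0(\mathbb{S}^1))\cap l_\theta$, one obtains for every $n$ a star-shaped closed curve $h_n$ around $q^*$, satisfying the cone conditions with the constants $a_i$, with $h_n(\mathbb{S}^1)=f^n(h_0(\mathbb{S}^1))\subset f^n(U)$.

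\emph{Step 3 (convergence and identification).} Let $D_n$ be the Jordan domain bounded by $h_n(\mathbb{S}^1)$ and $A_n:=\overline{D_n}\,\triangle\,\overline{D_{n+1}}$ the symmetric-difference region between consecutive curves. Using $B_1\subset D_n$ and $\overline{D_n}\subset\overline{B_2}$ one checks $A_n\subset U$; and since $f$ is a homeomorphism of the plane, $\overline{D_{n+1}}=f(\overline{D_n})$, whence $A_{n+1}=f(A_n)$ and $\mu(A_n)\leq\lambda^n\mu(A_0)\to 0$. On the other hand the radial functions $\theta\mapsto\|h_n(\theta)-q^*\|$ are Lipschitz with a constant independent of $n$ (from the uniform cone conditions, the well-aligned hypothesis, and $U$ being bounded away from $q^*$), so a region of maximal radial width $w$ between $h_n$ and $h_{n+1}$ has area at least $c\,w^2$ for a fixed $c>0$; therefore $\sup_\theta\|h_{n+1}(\theta)-h_n(\theta)\|\leq C\lambda^{n/2}$ and $(h_n)$ is uniformly Cauchy. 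Its limit $h^*$ is a continuous closed curve around $q^*$, it satisfies the cone conditions because \eqref{eq:h-cc} passes to uniform limits, and $h^*(\theta)\in\bigcap_n f^n(U)\cap l_\theta=\Lambda\cap l_\theta$, so $h^*(\mathbb{S}^1)\subseteq\Lambda$. For the reverse inclusion I would show that $\Lambda$ itself satisfies the cone conditions of Definition \ref{def:lip-manifold}: were there distinct $p,q\in\Lambda$ in a common chart $\gamma_i$ with $\gamma_i^{-1}(q)$ lying in the cone-violating cone at $\gamma_i^{-1}(p)$, then, since both points have full orbits in $\Lambda\subset U$, the cone conditions for $f$ propagate this violation along the backward orbit while area-contraction forces the transverse separation (in chart coordinates) to grow geometrically, eventually exceeding the size of $U$ -- a contradiction. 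Together with connectedness of $\Lambda$, this makes $\Lambda$ a simple closed curve meeting each $l_\theta$ exactly once, so $\Lambda=h^*(\mathbb{S}^1)$ and the theorem follows.

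\emph{Main obstacle.} I expect two delicate points. First, the lemma of Step 2: the cone conditions and the well-aligned property are local, so upgrading them to the statement that each iterate is an honest star-shaped Jordan curve -- rather than a merely immersed loop -- needs a careful degree/monotonicity argument. Second, and more serious, the quantitative core of Step 3: converting the purely measure-theoretic contraction hypothesis into a uniform $C^0$ contraction of the curves $h_n$ at a geometric rate, with constants uniform in $n$ and across the finitely many overlapping charts $\gamma_i$, and the parallel amplification argument used to rule out extra points of $\Lambda$. The remaining ingredients -- compactness, connectedness, invariance, and the $\omega$-limit inclusion -- are soft.
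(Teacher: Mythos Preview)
Your Steps 2--4 are essentially the paper's proof: the paper also runs a graph transform $\mathcal G$ on star-shaped cone-satisfying curves, argues (via a continuity/covering argument on $\theta\mapsto f(h(\mathbb S^1))\cap l_\theta$) that the image is again such a curve, and then converts the area bound $A_n\leq\lambda^{n-1}A_1$ on the region between $\mathcal G^{n-1}(h)$ and $\mathcal G^n(h)$ into $\|q_n-q_{n-1}\|\leq(\sqrt\lambda)^{n-1}\sqrt{A_1/C}$ using exactly your ``area $\geq c\,w^2$'' triangle argument. So on the existence and convergence of $h^*$ your outline and the paper coincide.

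Where you go further than the paper is in the $\omega$-limit assertion: the paper's written proof actually stops at the convergence of $\mathcal G^n(h)$ and does not argue the ``moreover'' clause. Your attractor set $\Lambda=\bigcap_n f^n(U)$ is a clean way to get $\omega(p)\subset\Lambda$ for free, but it shifts the burden to proving $\Lambda=h^*(\mathbb S^1)$. Here your Step 5 has a real gap. The backward-amplification claim---that the uniform \emph{area} contraction forces the \emph{transverse} separation of two vertically related points to grow geometrically under $f^{-1}$---does not follow from the hypotheses. Cone preservation for $f$ gives, by contrapositive, that ``vertical'' pairs stay vertical under $f^{-1}$, but $|\det Df|<1$ is compatible with \emph{contraction} of vertical vectors (e.g.\ $A=\mathrm{diag}(2,0.4)$ has $|\det A|<1$, preserves any horizontal cone, yet $A^{-1}$ contracts near the cone boundary in norm; and nondiagonal examples show the $y$-separation need not grow either). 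The pointwise backward expansion you invoke is essentially a rate condition, which in the paper only enters later (Definition \ref{def:rate-cond}), not in Theorem \ref{th:lip-curve}.

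A repair that stays within the paper's toolkit: first observe that any two $f$-invariant star-shaped closed curves around $q^*$ satisfying the cone conditions coincide, since the region between them is $f$-invariant, has measure $\leq\lambda\cdot(\text{itself})$, hence measure zero, and your area $\geq c\,w^2$ bound then forces the curves to agree. For the $\omega$-limit, rather than identifying all of $\Lambda$, sandwich an arbitrary $p\in U$ between two star-shaped cone-satisfying curves $h_0^{\rm in},h_0^{\rm out}$ (small radial perturbations of the given polygon); their iterates both converge to $h^*$ by the same Cauchy estimate (and the uniqueness above), and $f^n(p)$ stays radially between them, so $d(f^n(p),h^*(\mathbb S^1))\to 0$. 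This avoids any backward-expansion claim.
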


\begin{proof}
The proof is based on the following graph transform type argument. Let $h$
be a closed curve around $q^{\ast}$, which satisfies cone conditions. 
We show that $f\left( h\left( \mathbb{S}^{1}\right)
\right) $ is the image of another closed curve around $q^{\ast} $, which
satisfies cone conditions. Then we show  iterates of $h$
converge to $h^*$.
Below we provide the details.

Since $f$ is a homeomorphism $f(h(\mathbb{S}^{1}))$ is a circle. We claim
that 
\begin{equation}
f(h(\mathbb{S}^{1}))\cap l_{\theta}\neq\emptyset\text{ \qquad for any }%
\theta\in\mathbb{S}^{1}.  \label{eq:proof-tmp-1}
\end{equation}
To see this, let $g:\mathbb{S}^{1}\rightarrow\mathbb{R}$ be defined as 
\begin{equation*}
g\left( \theta\right) =\left\{ 
\begin{array}{lll}
1 &  & \text{if }f(h(\mathbb{S}^{1}))\cap l_{\theta}\neq\emptyset, \\ 
0 &  & \text{otherwise.}%
\end{array}
\right.
\end{equation*}
Once we show that $g$ is continuous, this will prove
 (\ref{eq:proof-tmp-1}).  This is  because from $f(h(\mathbb{S}^{1}))\subset
f(U)\subset U$ and $U\cap q^{\ast}=\emptyset$ it follows that \correction{comment 27a}{$q^{\ast}\notin f(h(\mathbb{S}^{1}))$}
so for at least one $\theta\in\mathbb{S}^{1}$ we must have $%
g\left( \theta\right) =1$; then, by continuity, we will have $g\equiv1$. Since $g\equiv1$, this circle intersects $l_{\theta}$ for every $\theta \in \mathbb{S}^2$, which implies (\ref{eq:proof-tmp-1}).

To establish the continuity 
we start by showing that if $g\left( \theta\right) =0$, then for $\beta$
sufficiently close to $\theta$ we will have $g\left( \beta\right) =0.$
Suppose that $g\left( \theta\right) =0$. Since $f(h(\mathbb{S}^{1}))\subset
f(U)\subset U$ we see that $f(h(\mathbb{S}^{1}))$ and $l_{\theta}\cap U$ are
disjoint compact sets. This means that we can find their open neighborhoods,
which will also be disjoint. Therefore $f(h(\mathbb{S}^{1}))\cap l_{\beta
}=\emptyset$ for $\beta$ close to $\theta$, hence $g\left( \beta\right) =0$,
as required. 
\begin{figure}[ptb]
\begin{center}
\includegraphics[height=3.8cm]{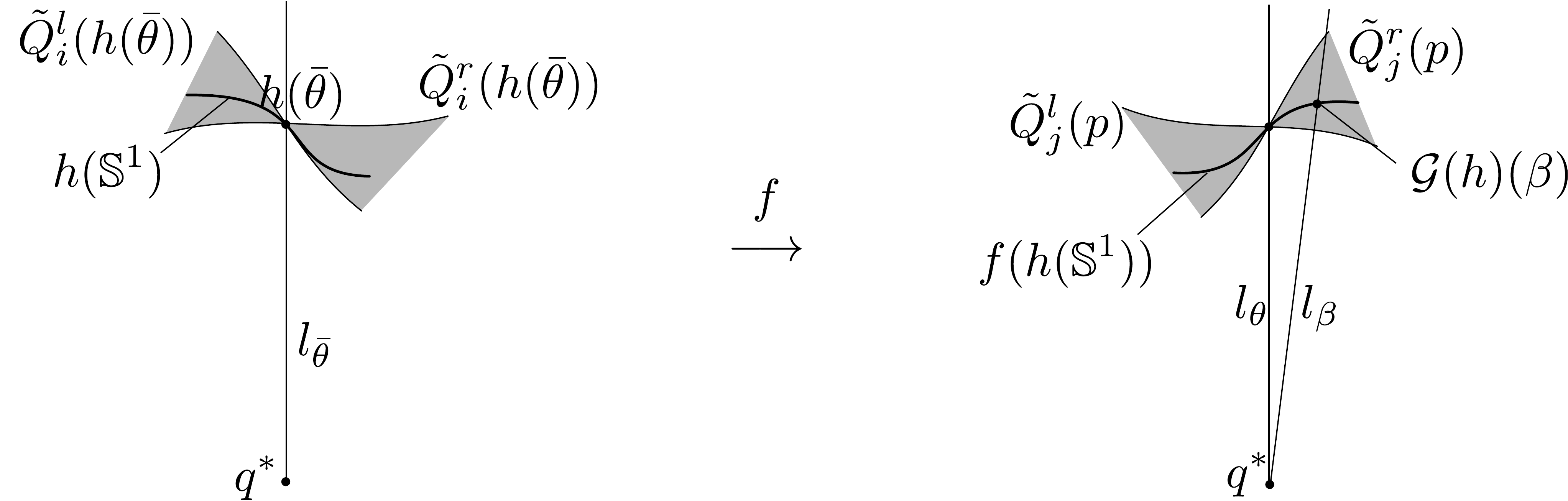}
\end{center}
\caption{Construction of $\mathcal{G}(h)$. }
\label{fig:graph-transform}
\end{figure}

We now show that if $g\left( \theta\right) =1$, then for $\beta$
sufficiently close to $\theta$ we will have $g\left( \beta\right) =1.$ Let $%
p\in f(h(\mathbb{S}^{1}))\cap l_{\theta}$. There existes a $\bar{\theta}$
such that $p=f\left( h\left( \bar{\theta}\right) \right) $. Take $i$ such
that $h\left( \bar{\theta}\right) \in U_{i}.$ From (\ref{eq:h-cc}), for $%
\bar{\beta}$ close to $\bar{\theta}$%
\begin{equation*}
h\left( \bar{\beta}\right) \in\tilde{Q}_{i}\left( h\left( \bar{\theta }%
\right) \right) .
\end{equation*}
Since $h$ is a closed curve around $q^{\ast}$ we see that we 
have either 
\begin{equation}
\bar{\beta}<\bar{\theta}\implies h\left( \bar{\beta}\right) \in\tilde{Q}%
_{i}^{l}\left( h\left( \bar{\theta}\right) \right) \quad\text{and\quad }\bar{%
\beta}>\bar{\theta}\implies h\left( \bar{\beta}\right) \in\tilde {Q}%
_{i}^{r}\left( h\left( \bar{\theta}\right) \right) ,
\label{eq:alignment-1-tmp}
\end{equation}
or%
\begin{equation}
\bar{\beta}<\bar{\theta}\implies h\left( \bar{\beta}\right) \in\tilde{Q}%
_{i}^{r}\left( h\left( \bar{\theta}\right) \right) \quad\text{and\quad }\bar{%
\beta}>\bar{\theta}\implies h\left( \bar{\beta}\right) \in\tilde {Q}%
_{i}^{l}\left( h\left( \bar{\theta}\right) \right) .
\label{eq:alignment-2-tmp}
\end{equation}
Without loss of generality let us assume that we have (\ref%
{eq:alignment-1-tmp}). (If we have (\ref{eq:alignment-2-tmp}) then the proof
follows from mirror arguments.) We know that $f$ satisfies cone conditions.
Let us therefore assume that we have (\ref{eq:f-cc1}), from which it follows
that \correction{comment 27b}{for some $j$ 
\begin{align}
f\left( h\left( \bar{\beta}\right) \right) & \in \tilde{Q}_{j}^{r}\left(
f(h\left( \bar{\theta}\right) )\right) =\tilde{Q}_{j}^{r}\left( p\right)
\qquad\text{for }\bar{\beta}>\bar{\theta},  \label{eq:cc-proof-1} \\
f\left( h\left( \bar{\beta}\right) \right) & \in \tilde{Q}_{j}^{l}\left(
f(h\left( \bar{\theta}\right) )\right) =\tilde{Q}_{j}^{l}\left( p\right)
\qquad\text{for }\bar{\beta}<\bar{\theta}.  \label{eq:cc-proof-2}
\end{align}}(If we have (\ref{eq:f-cc2}) then the proof will follow from mirror
arguments.) For any $\beta$ sufficiently close to $\theta$ there will
therefore exist a $\bar{\beta}$ such that $f\left( h\left( \bar{\beta }%
\right) \right) \in l_{\beta}$; see Figure \ref{fig:graph-transform}. Since $%
f\left( h\left( \bar{\beta}\right) \right) \in f\left( h\left( \mathbb{S}%
^{1}\right) \right) $ we see that for $\beta$ sufficiently close $g\left(
\beta\right) =1$, as required.

We have established (\ref{eq:proof-tmp-1}). \correction{comment 27c}{We will now define a function $%
\mathcal{G}(h):\mathbb{S}^{1}\rightarrow U$, which we will prove to be a closed curve around $q^*$ which has the same graph as $f(h(\mathbb{S}^1))$.
(We use the notation $\mathcal{G}(h)$ since the function follows from a
\textquotedblleft graph transform" type construction.) 
We start by taking $\theta=0$ and defining $\mathcal{G}(h)(0)$ to be any point from $f(h(\mathbb{S}^1))\cap l_{\theta}$. At this stage we do not know if such point is unique, so we choose an arbitrary point from the intersection. Take $j_0$ such that $\mathcal{G}(h)(0)\in U_{j_0}$. From (\ref{eq:cc-proof-1}--\ref{eq:cc-proof-2}) we see that  for $\theta>0$ close to zero, we can extend $\mathcal{G}(h)$ to obtain a curve by defining 
$\mathcal{G}(h)(\theta)=f(h(\mathbb{S}^1))\cap l_{\theta} \cap \tilde Q_{j_0} (\mathcal{G}(h)(0))$, 
as long as $\mathcal{G}(h)(\theta)$ remains in $U_{j_0}$. Let $\theta_1>0$ be some angle such that $\mathcal{G}(h)(\theta_1)\in U_{j_0}\cap U_{j_1}$, for some index $j_1$. We can then continue our construction for $\theta>\theta_1$ as 
$\mathcal{G}(h)(\theta)=f(h(\mathbb{S}^1))\cap l_{\theta} \cap \tilde Q_{j_1} (\mathcal{G}(h)(\theta_1))$. Continuing in this manner, we can reach $\theta=2\pi$. We are sure that at $\theta =2\pi$ we return to $\mathcal{G}(h)(0)$, since if this were not the case then we could continue with the construction and $f(h(\mathbb{S}^1))$ would contain an infinite spiral. This is not possible since $f(h(\mathbb{S}^1))$ is homeomorphic to a circe. 
 }From (\ref%
{eq:cc-proof-1}--\ref{eq:cc-proof-2}) we see that $\mathcal{G}(h)$ satisfies
cone conditions, which also implies that it is continuous.

We now show that by starting with the closed curve $h$ which connects
the points from the assumption of the theorem, then as we iterate the above
defined graph transform we shall converge to the curve we seek; i.e. 
\begin{equation*}
\lim_{n\rightarrow\infty}\mathcal{G}^{n}(h)=h^{\ast}.
\end{equation*}
Convergence follows from the assumption
that $f$ is uniformly contracting on $U$.

\begin{figure}[ptb]
\begin{center}
\includegraphics[height=4cm]{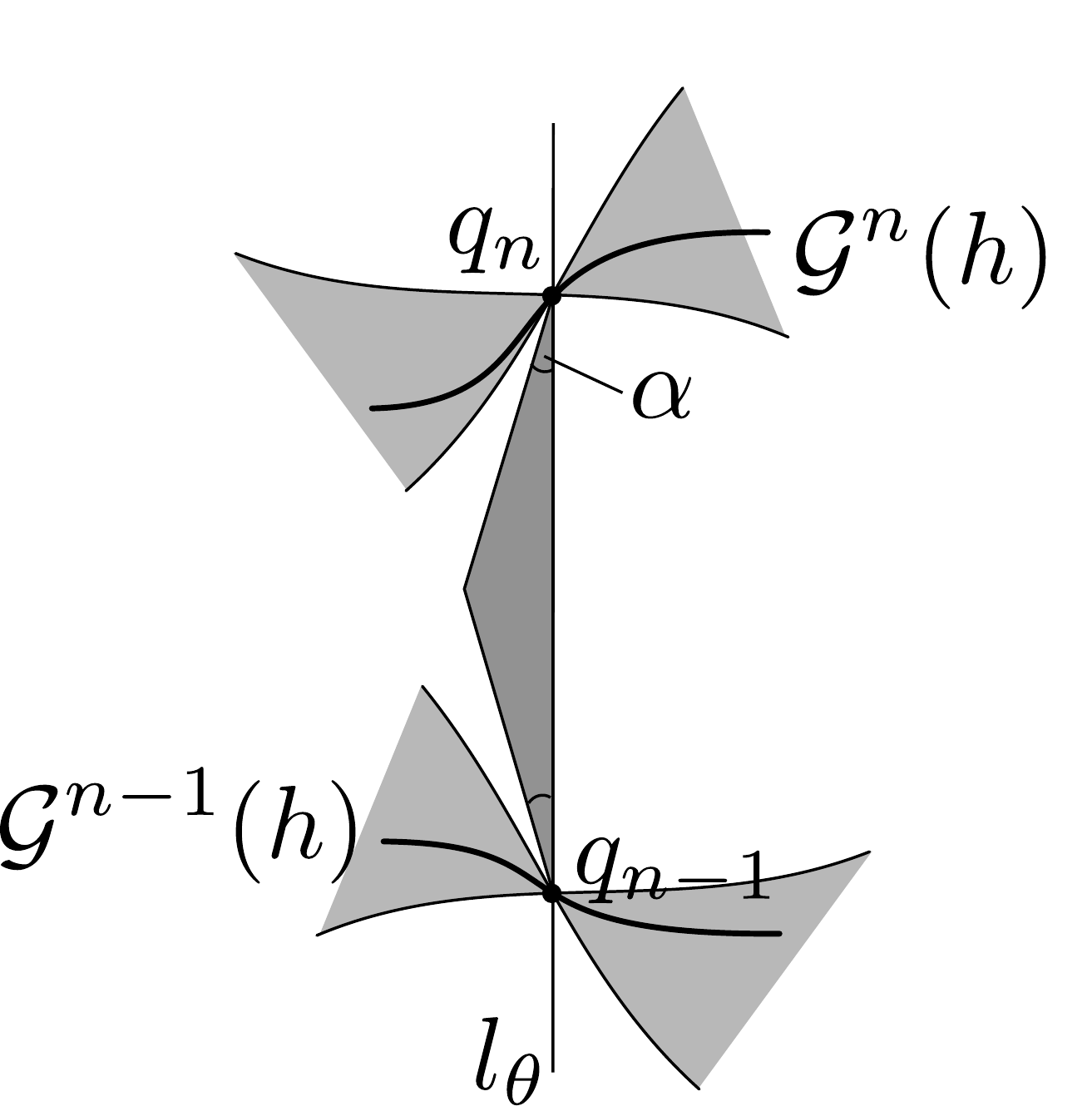}
\end{center}
\caption{Since $\mathcal{G}^{n}(h)$ and $\mathcal{G}^{n-1}(h)$ satisfy cone
conditions, we can find an angle $\protect\alpha$, such that the isosceles
triangle, with base joining $q_{n}$ and $q_{n-1}$, as in above plot, will
fit between $\mathcal{G}^{n}(h)$ and $\mathcal{G}^{n-1}(h)$. By compactness
of $U$ and the fact that we have a finite number of $C^{1}$ local maps $%
\protect\gamma_{i}$, the $\protect\alpha$ can be chosen independently of $n,%
\protect\theta,q_{n}$ and of $q_{n-1}$. This means that the area between $%
\mathcal{G}^{n}(h)$ and $\mathcal{G}^{n-1}(h)$ is bounded from below by $%
C\Vert q_{n}-q_{n-1}\Vert^{2}$, where $C>0$ is some constant independent
from $n$ and $\protect\theta$. }
\label{fig:area}
\end{figure}

For $n=1,2,\ldots $ let $A_{n}$ be the area between the curves $\mathcal{G}%
^{n}(h)$ and $\mathcal{G}^{n-1}(h)$. Since $f$ is uniformly contracting, $%
A_{n}\leq \lambda ^{n-1}A_{1}$. Let us consider two points, $q_{n}\in 
\mathcal{G}^{n}(h)\cap l_{\theta }$ and $q_{n-1}\in \mathcal{G}^{n-1}(h)\cap
l_{\theta }$, for some $\theta \in \mathbb{S}^{1}$. Since the curves $%
\mathcal{G}^{n}(h)$ and $\mathcal{G}^{n-1}(h)$ satisfy cone conditions the
area between them has to be at least $C\left\Vert q_{n}-q_{n-1}\right\Vert
^{2}$, where $C>0$ is a constant independent of the choice of $\theta $ and $%
n$. See Figure \ref{fig:area} and the caption below it. This gives us%
\begin{equation*}
C\left\Vert q_{n}-q_{n-1}\right\Vert ^{2}\leq A_{n}\leq \lambda ^{n-1}A_{1}
\end{equation*}%
so%
\begin{equation*}
\left\Vert q_{n}-q_{n-1}\right\Vert \leq \left( \sqrt{\lambda }\right) ^{n-1}%
\sqrt{\frac{A_{1}}{C}},
\end{equation*}%
from which, by the fact that $\sqrt{\lambda }<1$, it follows that the
sequence $q_{n}$ is convergent. This means that we can define \correction{comment 27d}{$h^{\ast
}\left( \theta \right) :=\lim_{n\to \infty}\mathcal{G}^{n}(h)\left( \theta \right) $}. All $%
\mathcal{G}^{n}(h)$ are closed curves around $q^{\ast }$, which satisfy cone
conditions. This property is preserved when passing to the limit, which
concludes the proof.
\end{proof}

\subsection{Smoothness\label{sec:Ck}}

In this Section we discuss how to establish that the invariant curve $%
h^{\ast }$ from Theorem \ref{th:lip-curve} is smooth. We first need to
introduce some notation.

Consider local maps $f_{ji}:\left[ -R_{i},R_{i}\right] \times \left[
-r_{i},r_{i}\right] \supset $domain$f_{ji}\rightarrow \mathbb{R}^{2}$
defined as%
\begin{equation*}
f_{ji}:=\gamma _{j}^{-1}\circ f\circ \gamma _{i}.
\end{equation*}%
(The domain of $f_{ji}$ can be empty.) We now define the following constants 
\begin{eqnarray*}
\xi &:=&\inf \left\{ \frac{\partial \left( \pi _{x}f_{ji}\right) }{\partial x%
}\left( q\right) -a_{j}\left\vert \frac{\partial \left( \pi
_{x}f_{ji}\right) }{\partial y}\left( q\right) \right\vert :i,j=1,\ldots
,N,~q\in \text{domain}f_{ij}\right\} , \\
\mu &:=&\sup \left\{ \left\vert \frac{\partial \left( \pi _{y}f_{ji}\right) 
}{\partial y}\left( q\right) \right\vert +a_{j}\left\vert \frac{\partial
\left( \pi _{x}f_{ji}\right) }{\partial y}\left( q\right) \right\vert
:i,j=1,\ldots ,N,~q\in \text{domain}f_{ij}\right\} .
\end{eqnarray*}

\begin{definition}
\label{def:rate-cond} We say that $f$ satisfies rate conditions of order $k$
if $\xi >0$ and for any $j\in \left\{ 1,\ldots ,k\right\} $%
\begin{equation}
\frac{\mu }{\xi ^{j+1}}<1.  \label{eq:rate-cond}
\end{equation}
\end{definition}

\begin{remark}
The definition is a simplified version of the rate condition considered in 
\cite{CZ}. There are two differences. The first is that in \cite{CZ} three
coordinates are considered: the unstable, the stable and central coordinate.
Here we only have two: the central coordinate $x$ and the stable coordinate $%
y$. The second difference is that the rate conditions considered in \cite{CZ}
include also bounds needed to establish the existence of the invariant
manifold. Here we do this using the slightly modified method from Theorem %
\ref{th:lip-curve}. Because of these two differences, the nine inequalities
for rate conditions from \cite{CZ} are reduced to the single inequality (\ref%
{eq:rate-cond}). (The condition (\ref{eq:rate-cond}) corresponds to the first
inequality in equation (4) from \cite{CZ}.)
\end{remark}
Below theorem is a reformulation of the smoothness result from \cite{CZ}, adapted to our simplified setting.

\begin{theorem}
\label{th:Ck}If in addition to all assumptions of Theorem \ref{th:lip-curve}
the map $f$ satisfies rate conditions of order $k,$ then $h^{\ast }$
established in Theorem \ref{th:lip-curve} is $C^{k}$.
\end{theorem}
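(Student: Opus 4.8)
The plan is to reduce Theorem~\ref{th:Ck} to the smoothness theorem of \cite{CZ} by exhibiting the invariant curve $h^*$ as a graph in each local chart $\gamma_i$ and verifying that the rate conditions of Definition~\ref{def:rate-cond} supply exactly the hypotheses needed in that reference. First I would fix $\theta_0$ and an index $i$ with $h^*(\theta_0)\in U_i$, and note that by the cone conditions established in Theorem~\ref{th:lip-curve} the set $\gamma_i^{-1}(h^*(\mathbb{S}^1))$, restricted to a neighborhood of $\gamma_i^{-1}(h^*(\theta_0))$ in $M_i$, lies in a cone $Q_i(p)$ about each of its points $p$. Since the cones $Q_i$ are well aligned, this local piece meets each half-line $\gamma_i^{-1}(l_\theta)$ in exactly one point, so it is the graph of a Lipschitz function $y=\varphi_i(x)$ with $|\varphi_i'|\le a_i$ wherever the derivative exists. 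This is the $C^0$ (indeed Lipschitz) statement already available; the work is to bootstrap regularity.

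Next I would set up the graph transform in these coordinates. Using the local maps $f_{ji}=\gamma_j^{-1}\circ f\circ\gamma_i$, invariance of $h^*$ means that the graph of $\varphi_i$ is carried by $f_{ji}$ onto the graph of $\varphi_j$ for the appropriate $j$. Writing a point on the graph as $(x,\varphi_i(x))$ and its image as $(\bar x,\bar y)=f_{ji}(x,\varphi_i(x))$, the horizontal component $\bar x(x)$ has derivative bounded below by $\xi>0$ (this is precisely how $\xi$ is defined, together with the cone bound $|\varphi_i'|\le a_i$), so $x\mapsto\bar x$ is a local diffeomorphism with uniformly expanding inverse; the vertical component contracts the graph directions by a factor at most $\mu$ (again by the definition of $\mu$ and $|\varphi_i'|\le a_i$). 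Thus the induced action on the ``slope'' variable $s=\varphi_i'(x)$ is a fibered contraction over the base expansion: schematically $s\mapsto (\text{slope of image})/(\partial_x\bar x)$ with the base map expanding by $\xi$ and the fiber contracting, so the combined Lipschitz/Hölder constant in the $j$-th order jet variables is governed by $\mu/\xi^{j+1}$. The inequality $\mu/\xi^{j+1}<1$ for $j=1,\dots,k$ is exactly the spectral gap needed to run the $C^k$ section theorem (the fiber contraction theorem of Hirsch--Pugh--Shub, in the form used in \cite{CZ}) on the bundle of $k$-jets: the invariant graph, whose existence and continuity we already have, is automatically the unique invariant section and is therefore $C^k$.

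The main obstacle, and the place where I would be most careful, is the passage from a single local chart to the global curve: the curve $h^*$ is covered by finitely many charts $\gamma_i$, the transition happens at the overlaps $U_i\cap U_j$, and the graph transform as written hops between charts. I would handle this exactly as in the proof of Theorem~\ref{th:lip-curve} — using that there are finitely many charts, each $\gamma_i$ is $C^1$ (in fact one needs $\gamma_i\in C^k$ here, which should be assumed or noted), and $U$ is compact, so the constants $\xi,\mu$ are uniform over all chart pairs — and observe that $C^k$ regularity is a local property, so once the invariant graph is shown $C^k$ in each chart the global curve $h^*$ is $C^k$. A secondary technical point is checking that the domains $\operatorname{domain}f_{ji}$ cover a neighborhood of the curve compatibly, so that every point of $h^*$ has a chart in which the jet-bundle argument applies; this follows from $f(U)\subset\operatorname{int}U$ together with the covering $U=\bigcup\gamma_i(M_i)$. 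Modulo these bookkeeping matters, the theorem is a direct translation of the smoothness result of \cite{CZ} into the present simplified two-coordinate setting, with the nine rate inequalities there collapsing to the single family (\ref{eq:rate-cond}) because the existence of the invariant manifold is supplied separately by Theorem~\ref{th:lip-curve}.
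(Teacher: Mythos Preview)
Your proposal is correct and follows essentially the same route as the paper: both arguments reduce the smoothness claim to the machinery of \cite{CZ} by observing that, in local charts, invariance of $h^*$ under the graph transform together with the rate inequality $\mu/\xi^{j+1}<1$ is precisely the hypothesis needed there. The only presentational difference is that the paper invokes Lemma~48 of \cite{CZ} in its ``iterative'' form---start the graph transform from a $C^k$ seed curve (the piecewise affine circle with corners smoothed) and use the lemma to pass $C^k$ regularity to the limit---whereas you phrase it as a fiber-contraction/$C^k$-section argument applied directly to the already-known invariant graph; these are equivalent viewpoints and your extra care about chart transitions and the need for $\gamma_i\in C^k$ (automatic here since the $\gamma_i$ are affine) is appropriate.
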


\begin{proof}
The result follows from Lemma 48 in \cite{CZ}. In our setting the only
needed condition to apply Lemma 48 from \cite{CZ} is the condition (\ref%
{eq:rate-cond}); see the remark in the third bullet list item on page 6226
in \cite{CZ}. Lemma 48 from \cite{CZ} ensures that when we iterate the graph
transform from the proof of Theorem \ref{th:lip-curve} starting 
with a $C^{k}$ curve $h^{0},$ then the $C^{k}$ smoothness persists as we
pass to the limit. In the proof of Theorem \ref{th:lip-curve} we take $h^{0}$
to be a piecewise affine circle. We can smooth out the corners of such curve
to make it $C^{k}$, so the fact that the smoothness is preserved in the
limit ensures the claim.
\end{proof}

\begin{figure}[ptb]
\begin{center}
\includegraphics[height=4cm]{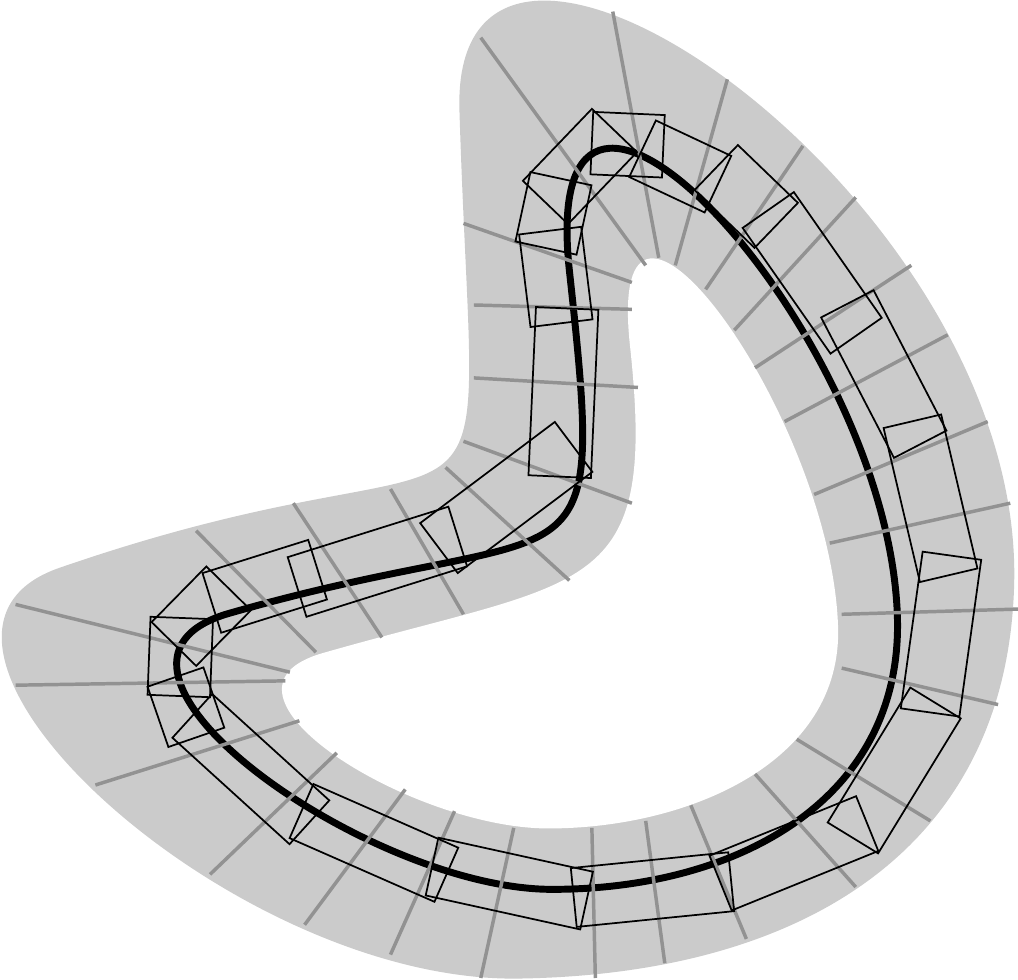}
\end{center}
\caption{Generalization to a vector bundle setting. The vector bundle $E$ is in grey, its base is the curve $p^*$, which is in black, with the fibers $E_{\theta}$ represented as the grey lines. The set $U$ consists of the union of the small rectangles. Note that in this picture $p^*$ is not the invariant curve, rather it is the base of the vector bundle.}
\label{fig:vector-bundle}
\end{figure}

\correction{comment 16}{\subsection{Generalization to the setting of vector bundles}\label{sec:generalization}
In this section we present how the results from Sections \ref{sec:Lip-curves} and \ref{sec:Ck} can be generalized. Let us
consider a closed curve $p^{\ast}\subset\mathbb{R}^{2}$, parameterized by
$\theta\in\mathbb{S}^{1}$, i.e. $p^{\ast}:\mathbb{S}^{1}\rightarrow
\mathbb{R}^{2}$. Consider a vector bundle $E$ in $\mathbb{R}^{2}$ with
$p^{\ast}(\mathbb{S}^{1})$ as its base and with fibers $E_{\theta}$ at
$p^{\ast}\left(  \theta\right)  $, for $\theta\in\mathbb{S}^{1}$. Assume that
the set $U\subset\mathbb{R}^{2}$ of the form (\ref{eq:U-set}) is a subset of
$E$ (see Figure \ref{fig:vector-bundle}). We will say that the family of cones $Q_{i}$ introduced
in the Section \ref{sec:Lip-curves} is well aligned with $E_{\theta}$, if it satisfies the
assumptions of Definition \ref{def:well-aligned-cones}, with $l_{\theta}$
changed to $E_{\theta}$. We will say that $h:\mathbb{S}^{1}%
\rightarrow\mathbb{R}^{2}$ is a closed curve around $p^{\ast}$ iff $h\left(
\mathbb{S}^{1}\right)  \cap E_{\theta}=h\left(  \theta\right)  $ for all $\theta\in \mathbb{S}^1$. With such
modiffications Theorems \ref{th:lip-curve} and \ref{th:Ck} remain true. Their
proofs in this more general setting are identical, with the only difference
that $l_{\theta}$ needs to be changed to $E_{\theta}$ and $q^{\ast}$ needs to
be changed to $p^{\ast}$ throughout the arguments.}

\correction{}{The main difficulty in this setting is actually constructing the needed vector bundles
in particular examples, 
and this is the technicality overcome using the star-shaped assumption in our 
earlier arguments. 
The interested reader is referred to \cite{CZ0,CZ} for more general discussion. }

\subsection{Validation of assumptions\label{sec:Lip-curves-validation}}

We finish this Section by describing how the assumptions of Theorems \ref%
{th:lip-curve}, \ref{th:Ck} are validated in practice. For the applications we
have in mind we take $\gamma _{i}$ to be affine maps, so checking that $Q_{i}$ are
well aligned is a simple linear algebra exercise of checking that $%
Q_{i}\left( p\right) $ and $\gamma _{i}^{-1}(l_{\theta })\ $ intersect
at one and only one point. For instance, when $\gamma _{i}\left( q\right)
=A_{i}q+q_{i}$ (with a matrix $A_{i}$ and point $q_{i}\in \mathbb{R}^{2}$)
one checks that for any $\theta $ such that $U_{i}\cap l_{\theta }\neq
\emptyset $ for $v_{\theta }=A_{i}^{-1}\left( \left( \cos \theta ,\sin
\theta \right) \right) $ we have $\left\vert \pi _{y}v_{\theta }\right\vert
>a_{i}\left\vert \pi _{x}v_{\theta }\right\vert $. This condition is 
checked with computer assistance.

Next we take a sequence of points $q_{1}\ldots,q_{N}$ around $q^{\ast}$ (in
our application we take the points $q_i$ to be the same as those used to
define the affine maps $\gamma_{i}$) and validate that lines joining $q_{i}$
with $q_{i+1}$ (and the line joining $q_{N}$ with $q_{1}$) lie inside the
cones $\tilde{Q}_{i}(q_{i})$ and $\tilde{Q}_{i+1}(q_{i+1})$, respectively
(and the cones $\tilde{Q}_{N}\left( q_{N}\right) $ and $\tilde{Q}_{1}(q_{1})$%
, respectively). This is also done with computer assistance.

To check that $f$ is uniformly contracting in $U$ we chevk that for
any $i\in \{1,\ldots,N\}$ and any matrix $A\in\left[ Df\left( U_{i}\right) %
\right] $ we have $\left\vert \det\left( A\right) \right\vert <1.$ This is
particularly simple to do in our case, as the computation of determinants of $%
2\times2$ matrices is straightforward. Once again, this is done with
computer assistance.

The condition that $f\left( U\right) $ is a subset of $U$ can be done
directly by taking $U_{i}=\bigcup_{k=1}^{m}U_{i,k}$ for some chosen $U_{i,k}$
and checking that for any $i\in \left\{ 1,\ldots ,N\right\} $ and $k\in
\left\{ 1,\ldots ,m\right\} $ there exists a $j\in \left\{ 1,\ldots
,N\right\} $ such that $f\left( U_{i,k}\right) \subset U_{j}$. In our
computer assisted approach we do this by taking 
\begin{equation*}
M_{i,k}:=\left[ -R_{i}+(k-1)\frac{2R_{i}}{m},-R_{i}+k\frac{2R_{i}}{m}\right]
\times \left[ -r_{i},r_{i}\right] ,
\end{equation*}%
$U_{i,k}:=\gamma _{i}\left( M_{i,k}\right) $ and checking that for some $%
j\in \left\{ 1,\ldots ,N\right\} $ 
\begin{equation}
\gamma _{j}^{-1}\circ f\circ \gamma _{i}\left( M_{i,k}\right) \subset M_{j}.
\label{eq:fragment-contraction}
\end{equation}%

The computation just described, which is done in the \textquotedblleft local
coordinates\textquotedblright , is well suited to interval arithmetic
implementation provided the maps $\gamma _{i}$ are well aligned with the
invariant curve we wish to established. When $\gamma _{l}$ are affine maps,
i.e. $\gamma _{l}\left( q\right) =A_{l}q+q_{l}$ condition (\ref%
{eq:fragment-contraction}) is validated by using Lemma \ref%
{lem:Df-difference} by taking a point $v_{i,k}\in M_{i,k}$ and checking that %
\begin{equation}
\gamma _{j}^{-1}\circ f\circ \gamma _{i}\left( M_{i,k}\right) \subset \gamma
_{j}^{-1}\circ f\circ \gamma _{i}\left( v_{i,k}\right) +\left( A_{j}^{-1} 
\left[ Df\left( \gamma _{i}\left( M_{i,k}\right) \right) \right]
A_{i}\right) \left( M_{i,k}-v_{i,k}\right) .
\label{eq:contraction-lip-curve-validation}
\end{equation}%
With a good choice of the matrices $A_{l}$ the matrix $A_{j}^{-1}\left[
Df\left( \gamma _{i}\left( M_{i,k}\right) \right) \right] A_{i}$ can be made
close to diagonal, which helps to reduce the wrapping effect of interval
arithmetic computations. In this case the derivatives of the local maps are
bounded by%
\begin{equation}
Df_{ji}(q)\in A_{j}^{-1}\left[ Df\left( \gamma _{i}\left( M_{i,k}\right)
\right) \right] A_{i}\qquad \text{for }q\in M_{i,k},
\label{eq:local-map-derivative}
\end{equation}%
and these can be used to compute the coefficients $\mu ,\xi $ needed for the
rate conditions.

The next lemma is used to validate cone conditions. We express it in a
more general setting where the map is defined on $\mathbb{R}^{n}$,
as this setting is needed in Section \ref{sec:homoclinic-maps}.
Below we state the result using the notations $f$ and $M$, but for our
purposes here one would apply it for a local map $f_{ji}$ on a set $M_{i,k}$,
with the bound on the derivative from (\ref%
{eq:local-map-derivative}). (We remove the subscripts to simplify
the statement and to make it more compatible with the story from Section \ref%
{sec:homoclinic-maps}.)
\begin{lemma}
\label{lem:cone-cond}Let $f:\mathbb{R}\times \mathbb{R}^{n-1}\rightarrow 
\mathbb{R}\times \mathbb{R}^{n-1}$ and let $M\subset \mathbb{R}^{n}$. Let $a_{1},a_{2}>0$ and 
\begin{eqnarray*}
Q_{1}\left( p\right) &=&\left\{ \left( x,y\right) \in \mathbb{R}\times 
\mathbb{R}^{n-1}:a_{1}\left\vert x-\pi _{x}p\right\vert \geq \left\Vert
y-\pi _{y}p\right\Vert \right\} , \\
Q_{2}\left( p\right) &=&\left\{ \left( x,y\right) \in \mathbb{R}\times 
\mathbb{R}^{n-1}:a_{2}\left\vert x-\pi _{x}p\right\vert \geq \left\Vert
y-\pi _{y}p\right\Vert \right\} .
\end{eqnarray*}%
%Assume that%
%\begin{equation*}
%\left[ Df\left( M\right) \right] =\left( 
%\begin{array}{ll}
%\mathbf{A} & \mathbf{C}_{12} \\ 
%\mathbf{C}_{21} & \mathbf{B}%
%\end{array}%
%\right) ,
%\end{equation*}%
%where $\mathbf{A}=\left[ a_{1},a_{2}\right] $ is a closed interval, $\mathbf{%
%B}$, $\mathbf{C}_{12}$ and $\mathbf{C}_{21}$ are $\left( n-1\right) \times
%\left( n-1\right) $, $1\times \left( n-1\right) $ and $\left( n-1\right)
%\times 1$ interval matrices, respectively.
\correction{comment 28}{}If for any $v=\left( 1,v_{y}\right) \in Q_{1}\left( 0\right) $ and any $A\in %
\left[ Df\left( M\right) \right] $ we have $Av\in Q_{2}\left( 0\right) $
then for any $p_{1},p_{2}\in M$ such that $p_{2}\in Q_{1}\left( p_{1}\right) 
$ we have $f\left( p_{2}\right) \in Q_{2}\left( f(p_{1})\right) $.
\end{lemma}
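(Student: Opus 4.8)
The plan is to deduce the cone condition for $f$ directly from the mean value theorem of Lemma~\ref{lem:Df-difference}, exploiting that $Q_2(0)$, viewed as a cone based at the origin, is invariant under multiplication by any real scalar (it is defined through $\abs{x}$ and $\norm{y}$, hence symmetric). First I would treat the degenerate case. Suppose $p_1,p_2\in M$ with $p_2\in Q_1(p_1)$ and $\pi_x p_1=\pi_x p_2$. Then $a_1\abs{\pi_x p_2-\pi_x p_1}=0\geq\norm{\pi_y p_2-\pi_y p_1}$, which forces $p_1=p_2$; consequently $f(p_2)=f(p_1)\in Q_2(f(p_1))$, since the centre of a cone always lies in the cone.

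In the remaining case set $\delta:=\pi_x p_2-\pi_x p_1\neq 0$ and $w:=\pi_y p_2-\pi_y p_1$, so that $p_2-p_1=(\delta,w)$ and the hypothesis $p_2\in Q_1(p_1)$ is exactly $\norm{w}\leq a_1\abs{\delta}$. Define the normalized vector $v:=\tfrac1\delta(p_2-p_1)=(1,\,w/\delta)$; then $\norm{\pi_y v}=\norm{w}/\abs{\delta}\leq a_1$, so $v=(1,v_y)\in Q_1(0)$. Applying Lemma~\ref{lem:Df-difference} to $f$ on $M$ produces a matrix $A\in[Df(M)]$ with $f(p_2)-f(p_1)=A(p_2-p_1)=\delta\,(Av)$. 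By the hypothesis of the lemma, $Av\in Q_2(0)$, i.e. $\norm{\pi_y(Av)}\leq a_2\abs{\pi_x(Av)}$. Multiplying this inequality by $\abs{\delta}$ and using $\pi_x\bigl(f(p_2)-f(p_1)\bigr)=\delta\,\pi_x(Av)$ together with $\pi_y\bigl(f(p_2)-f(p_1)\bigr)=\delta\,\pi_y(Av)$ gives $\norm{\pi_y(f(p_2)-f(p_1))}\leq a_2\abs{\pi_x(f(p_2)-f(p_1))}$, which is precisely $f(p_2)\in Q_2(f(p_1))$.

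I expect no real obstacle; the only delicate point is the applicability of Lemma~\ref{lem:Df-difference}, which requires the segment joining $p_1$ and $p_2$ to lie inside the domain on which $[Df(M)]$ is formed. This is automatic in the intended applications, where $M$ is a Cartesian product of intervals $M_{i,k}$, and in general one reads the statement with $M$ convex (or, equivalently, replaces $[Df(M)]$ by the enclosure of $Df$ over the convex hull of $\{p_1,p_2\}$, which is contained in $[Df(M)]$).
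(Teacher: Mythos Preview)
Your proof is correct and follows essentially the same argument as the paper: reduce to the case $\pi_x(p_2-p_1)\neq 0$, normalize the difference to a vector in $Q_1(0)$ with first coordinate $1$, invoke Lemma~\ref{lem:Df-difference} to obtain $A\in[Df(M)]$, and use the hypothesis $Av\in Q_2(0)$ together with the scaling invariance of $Q_2(0)$. Your normalization by the signed quantity $\delta$ (rather than $|\pi_x(p_1-p_2)|$ as in the paper) is in fact slightly cleaner, since it lands directly on a vector of the form $(1,v_y)$ without needing to argue separately about the sign; your closing remark on the convexity of $M$ required for Lemma~\ref{lem:Df-difference} is also a fair point that the paper leaves implicit.
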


\begin{proof}
If $p_{1}=p_{2}$ the result is automatic. Assume that $p_{1}\neq p_{2}$.
Since $p_{2}\in Q_{1}\left( p_{1}\right) $ we see that $\pi _{x}\left(
p_{1}-p_{2}\right) \neq 0$. By Lemma \ref{lem:Df-difference} for some $A\in %
\left[ Df\left( B\right) \right] $ we have $f\left( p_{1}\right) -f\left(
p_{2}\right) =A\left( p_{1}-p_{2}\right) $. Take $v=\frac{p_{1}-p_{2}}{%
\left\vert \pi _{x}\left( p_{1}-p_{2}\right) \right\vert }$. Then since $%
Av\in Q_{2}\left( 0\right) $ we have $\left\Vert \pi _{y}Av\right\Vert \leq
a_{2}\left\vert \pi _{x}Av\right\vert $ so in turn \correction{comment 29}{}
\begin{align*} 
\left\Vert \pi _{y}(f\left( p_{1}\right) -f\left( p_{2}\right) )\right\Vert
& =\left\vert \pi _{x}\left( p_{1}-p_{2}\right) \right\vert \left\Vert \pi
_{y}Av\right\Vert  \\
& \leq \left\vert \pi _{x}\left( p_{1}-p_{2}\right) \right\vert
a_{2}\left\vert \pi _{x}Av\right\vert \\
& =a_{2}\left\vert \pi _{x}\left(
f\left( p_{1}\right) -f\left( p_{2}\right) \right) \right\vert ,
\end{align*}%
as required.
\end{proof}

%TCIDATA{Version=5.00.0.2606}
%TCIDATA{LaTeXparent=0,0,MMFedit.tex}
\section{Heteroclinic connections between fixed points of maps\label%
{sec:homoclinic-maps}}

In this Section we discuss how to prove the existence of
heteroclinic orbits between two
fixed points of a map. We are interested in the case when one of the
fixed points is hyperbolic, and the other fixed point is a stable focus. The
heteroclinic orbits will be found in three steps. The first is to establish an
attracting neighborhood for which all trajectories converge to the
stable focus. This is discussed in Section \ref{sec:focus}. The second step
is to establish a bound on the unstable manifold of the hyperbolic fixed
point. This is described in Section \ref{sec:Wu}. Lastly we propagate the
unstable manifold by our map. If it reaches the attracting neighborhood of
the stable focus then we have established a heteroclinic orbit. This is
discussed in Section \ref{sec:heteroclinic-subsection}.

\subsection{Establishing attracting fixed points\label{sec:focus}}

In this Section we show how one can obtain the existence of an attracting
fixed point within a prescribed neighborhood. We start with a technical
lemma.
\begin{lemma}
\label{lem:contraction}Let $f:\mathbb{R}^{n}\rightarrow\mathbb{R}^{n}$ be $%
C^{1}$, and $\lambda>0$ be a fixed constant. Let $B$ be a cartesian product
of closed intervals in $\mathbb{R}^{n}$ (an $n$-dimensional cube). If for
any $A\in\left[ Df\left( B\right) \right] $ the matrix $\lambda Id-A^{\top}A$
is strictly positive definite, then for any $p_{1},p_{2}\in B$ 
\begin{equation*}
\left\Vert f\left( p_{1}\right) -f\left( p_{2}\right) \right\Vert
^{2}<\lambda\left\Vert p_{1}-p_{2}\right\Vert ^{2}.
\end{equation*}
\end{lemma}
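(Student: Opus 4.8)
The plan is to use Lemma \ref{lem:Df-difference} to reduce the statement to a pointwise estimate on matrices in the interval enclosure $[Df(B)]$, and then to exploit the positive definiteness hypothesis directly. Concretely, given $p_1, p_2 \in B$, Lemma \ref{lem:Df-difference} supplies a matrix $A \in [Df(B)]$ (indeed $A = \int_0^1 Df(p_2 + t(p_1-p_2))\,dt$, and since $B$ is a product of intervals the segment between $p_1$ and $p_2$ stays in $B$, so this average lies in the enclosure) with $f(p_1) - f(p_2) = A(p_1 - p_2)$. Writing $v := p_1 - p_2$, the claim becomes $\|Av\|^2 < \lambda \|v\|^2$, i.e. $\langle Av, Av \rangle < \lambda \langle v, v\rangle$.

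The key step is then to rewrite $\|Av\|^2 = \langle A^\top A v, v\rangle$ and $\lambda\|v\|^2 = \langle \lambda\, Id\, v, v\rangle$, so that the difference is $\langle (\lambda\, Id - A^\top A) v, v\rangle$. By hypothesis $\lambda\, Id - A^\top A$ is strictly positive definite, hence this quantity is strictly positive for every $v \neq 0$; when $p_1 = p_2$ the inequality $0 < 0$ fails, so one handles that case by noting that for $p_1 = p_2$ both sides are zero and the strict inequality should be read as an equality — or, more cleanly, one simply assumes $p_1 \neq p_2$, which is the only case of interest (and is how the lemma will be invoked, e.g. in the interval Newton / contraction-mapping arguments of Section \ref{sec:focus}). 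I would state the proof for $p_1 \neq p_2$ and remark that the degenerate case is trivial.

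I do not anticipate any real obstacle: the proof is essentially two lines once Lemma \ref{lem:Df-difference} is cited, the only mild subtlety being the $p_1 = p_2$ boundary case and the verification that the averaged derivative genuinely belongs to $[Df(B)]$ — but the latter is immediate since each entry of the average of $Df$ over the segment lies between the infimum and supremum of that entry over $B$, which is exactly the defining property of the interval enclosure. So the proof is: invoke Lemma \ref{lem:Df-difference} to get $A \in [Df(B)]$ with $f(p_1)-f(p_2) = A(p_1-p_2)$; expand $\lambda\|p_1-p_2\|^2 - \|f(p_1)-f(p_2)\|^2 = \langle (\lambda\, Id - A^\top A)(p_1-p_2), p_1-p_2\rangle$; conclude strict positivity from the positive-definiteness hypothesis whenever $p_1 \neq p_2$.
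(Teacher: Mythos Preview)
Your proof is correct and follows exactly the same route as the paper: invoke Lemma~\ref{lem:Df-difference} to write $f(p_1)-f(p_2)=A(p_1-p_2)$ for some $A\in[Df(B)]$, then expand $\lambda\|p_1-p_2\|^2-\|f(p_1)-f(p_2)\|^2=(p_1-p_2)^{\top}(\lambda\,Id-A^{\top}A)(p_1-p_2)>0$ by positive definiteness. Your observation about the degenerate case $p_1=p_2$ is well taken (the paper glosses over it), and your handling of it is fine.
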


\begin{proof}
By Lemma \ref{lem:Df-difference} we can choose an $A\in\left[ Df\left(
B\right) \right] $ such that $f\left( p_{1}\right) -f\left( p_{2}\right)
=A\left( p_{1}-p_{2}\right) $. (The choice of $A$ depends on $p_{1}$ and $%
p_{2}$.) We therefore have%
\begin{align*}
& \lambda\left\Vert p_{1}-p_{2}\right\Vert ^{2}-\left\Vert f\left(
p_{1}\right) -f\left( p_{2}\right) \right\Vert ^{2} \\
& =\lambda\left( p_{1}-p_{2}\right) ^{\top}\left( p_{1}-p_{2}\right) -\left(
f\left( p_{1}\right) -f\left( p_{2}\right) \right) ^{\top }\left( f\left(
p_{1}\right) -f\left( p_{2}\right) \right) \\
& =\lambda\left( p_{1}-p_{2}\right) ^{\top}\left( p_{1}-p_{2}\right) -\left(
p_{1}-p_{2}\right) ^{\top}A^{\top}A\left( p_{1}-p_{2}\right) \\
& =\left( p_{1}-p_{2}\right) ^{\top}\left( \lambda Id-A^{\top}A\right)
\left( p_{1}-p_{2}\right) \\
& >0,
\end{align*}
where the last line follows from the fact that $\lambda Id-A^{\top}A$ is
strictly positive definite.
\end{proof}

\begin{remark}
To check that a matrix $2\times2$ matrix $C$ is strictly positive definite,
it is enough to establish that%
\begin{equation*}
\det\left( C\right) >0\qquad\text{and\qquad trace}\left( C\right) >0.
\end{equation*}
\end{remark}

The next lemma establishes that we have an attracting fixed point within a
prescribed neighborhood.
\begin{lemma}
\label{lem:contraction-main}Let $f:\mathbb{R}^{n}\rightarrow\mathbb{R}^{n}$
be $C^{1}$. Let $\lambda\in\left( 0,1\right) $ be a fixed constant. Let $B$
be a cartesian product of closed intervals in $\mathbb{R}^{n}$ (an $n$%
-dimensional cube). If $f\left( B\right) \subset B$ and for any $A\in\left[
Df\left( B\right) \right] $ the matrix $\lambda Id-A^{\top}A$ is strictly
positive definite, then there exists an attracting fixed point of $f$ in $B$%
. (By \textquotedblleft attracting" we mean that for any $p\in B$, $f^{k}(p)
$ will converge to the fixed point as $k$ tends to infinity.)
\end{lemma}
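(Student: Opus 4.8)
The plan is to combine the contraction estimate from Lemma \ref{lem:contraction} with the invariance hypothesis $f(B)\subset B$ to produce a fixed point as the limit of a Cauchy sequence. First I would fix any point $p_0\in B$ and define the orbit $p_{n+1}:=f(p_n)$; by the invariance hypothesis $f(B)\subset B$ this orbit stays in $B$ for all $n\geq 0$, so all the points to which we apply Lemma \ref{lem:contraction} genuinely lie in the cube $B$ where the positive-definiteness hypothesis holds. Applying Lemma \ref{lem:contraction} to the pair $p_{n+1},p_n$ gives $\|p_{n+2}-p_{n+1}\|^2<\lambda\|p_{n+1}-p_n\|^2$, hence by induction $\|p_{n+1}-p_n\|\leq (\sqrt{\lambda})^n\|p_1-p_0\|$. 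Since $0<\lambda<1$ we have $\sqrt{\lambda}<1$, so the series $\sum_n \|p_{n+1}-p_n\|$ converges, the sequence $(p_n)$ is Cauchy, and it therefore converges to some $p^\ast\in\overline{B}=B$ (using that $B$, a product of closed intervals, is closed).

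Next I would check that $p^\ast$ is a fixed point. Since $f$ is $C^1$ it is in particular continuous, so $f(p^\ast)=f(\lim_n p_n)=\lim_n f(p_n)=\lim_n p_{n+1}=p^\ast$. For uniqueness within $B$: if $q^\ast$ were another fixed point in $B$, Lemma \ref{lem:contraction} applied to $p^\ast,q^\ast$ would give $\|p^\ast-q^\ast\|^2=\|f(p^\ast)-f(q^\ast)\|^2<\lambda\|p^\ast-q^\ast\|^2$, which forces $\|p^\ast-q^\ast\|=0$. Finally, to establish that $p^\ast$ is attracting in the sense stated, I would take an arbitrary $p\in B$, note again that $f^k(p)\in B$ for all $k$ by invariance, and apply Lemma \ref{lem:contraction} to the pair $f^{k}(p),p^\ast$ (both in $B$), obtaining $\|f^{k+1}(p)-p^\ast\|=\|f(f^k(p))-f(p^\ast)\|<\sqrt{\lambda}\,\|f^k(p)-p^\ast\|$; iterating gives $\|f^k(p)-p^\ast\|\leq(\sqrt\lambda)^k\|p-p^\ast\|\to 0$.

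I do not expect any serious obstacle here; the statement is essentially the Banach fixed point theorem with the contraction factor supplied by Lemma \ref{lem:contraction}. The only point requiring a little care is that Lemma \ref{lem:contraction} gives a \emph{strict} inequality rather than a uniform contraction constant strictly below one for all pairs simultaneously — but this is harmless because the chained estimate along the orbit only ever compares consecutive iterates (or an iterate with $p^\ast$), and for each such comparison we may invoke the lemma with the fixed constant $\lambda$, yielding the geometric bound $(\sqrt\lambda)^n$ uniformly. One should also remember to invoke the preceding remark to note that in the $2\times 2$ case the positive-definiteness hypothesis is easy to verify, though that is not needed for the proof itself.
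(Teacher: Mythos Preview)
Your proof is correct and is precisely the approach the paper takes: the paper's proof simply notes that Lemma~\ref{lem:contraction} makes $f$ a contraction on $B$ with constant $\sqrt{\lambda}<1$ and invokes the Banach fixed point theorem, which is exactly what you have spelled out in detail. Your worry about the strict inequality is unnecessary, since Lemma~\ref{lem:contraction} already furnishes the \emph{uniform} constant $\sqrt{\lambda}$ valid for every pair in $B$.
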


\begin{proof} \correction{comment 30}{Since $\lambda\in (0,1)$, by Lemma \ref{lem:contraction} we see that $f$ is contracting, so the result follows from the Banach fixed point theorem.}
\end{proof}

\subsection{Establishing unstable manifolds of hyperbolic fixed points\label%
{sec:Wu}}

We now give a method for establishing mathematically rigorous bounds 
for a local unstable manifold of a hyperbolic fixed point. We restrict to the case where
the unstable manifold is of dimension $1$ as this is 
the case seen in the applications. Our method is 
based on \cite{Cap-Lyap}, and a more general procedure is found
in \cite{Zgliczynski-cone-cond}.

Let $p^{\ast}$ be a hyperbolic fixed point of a $C^{1}$ map $f:\mathbb{R}%
^{n}\rightarrow\mathbb{R}^{n}$. Assume that the unstable eigenspace of $%
p^{\ast}$ is of dimension $u=1$. Assume that the unstable eigenvalue of $%
Df(p^{\ast})$ is $\lambda$, with $|\lambda|>1$.

Let $B_{u}$ be a closed interval and let $B_{s}$ be an $s:=n-u=n-1$
dimensional product of closed intervals (a closed cube in $\mathbb{R}^{s}$).
Let $B=B_{u}\times B_{s}$ and assume that $p^{\ast}\subset\mathrm{int}B.$
For any point $p\in\mathbb{R}^{n}=\mathbb{R}^{u}\times\mathbb{R}^{s}$ we
shall write $p=\left( p_{u},p_{s}\right) $. The subscripts $u$ and $s$ stand
of \textquotedblleft unstable" and \textquotedblleft stable", respectively.
This notation is chosen since in our approach these coordinates will be
roughly aligned with the unstable/stable eigenspaces of $p^{\ast}$. We will
use the notation $\pi_{u}p=p_{u}$ and $\pi_{s}p=p_{s}$ for the projections.

Let $L>0$ be a fixed constant. For any $p=(p_u,p_s)\in\mathbb{R}^{u}\times%
\mathbb{R}^{s}$ we define a cone centered at $p$ as%
\begin{equation}
Q\left( p\right) :=\left\{ q=\left( q_{u},q_{s}\right) :\left\Vert
q_{s}-p_{s}\right\Vert \leq L\left\vert q_{u}-p_{u}\right\vert \right\} .
\label{eq:cone-def}
\end{equation}

\begin{definition}
We say that $h:B_{u}\rightarrow B_{u}\times B_{s}$ is a horizontal disc in $%
B $ if it is continuous, if for any $x\in B_{u}$, $\pi_{u}h\left( x\right) =x$
and if $h\left( B_{u}\right) \subset Q\left( h(x)\right) $.
\end{definition}

In other words, horizontal discs are one dimensional curves in $B_{u}\times
B_{s}$, which are graphs of Lipschitz functions with the Lipschitz constant
$L.$

The next lemma is our main tool for establishing bounds on the unstable
manifold of $p^*$.

\begin{lemma}
\label{lem:wu}\cite{Cap-Lyap}Assume that for any $p_{1},p_{2}\in B$ such
that $p_{2}\in Q\left( p_{1}\right) $ we have 
\begin{equation}
f(p_{2})\in Q\left( f\left( p_{1}\right) \right) .  \label{eq:cone-cond-hyp}
\end{equation}
Let $m\in\left( 1,\left\vert \lambda\right\vert \right) $ be a fixed number.
Assume that for any $p\in(Q(p^{\ast})\cap B)\setminus\{p^{\ast}\}$ we have%
\begin{equation}
\left\Vert f\left( p\right) -p^{\ast}\right\Vert >m\left\Vert p-p^{\ast
}\right\Vert .  \label{eq:expansion-hyp}
\end{equation}
Then the unstable manifold of $p^{\ast}$ is contained in $Q\left( p^{\ast
}\right) $. Moreover, there exists a horizontal disc $w^{u}:B_{u}\rightarrow
B$ in $B$ such that the unstable manifold of $p^{\ast}$ is the graph of $%
w^{u}.$
\end{lemma}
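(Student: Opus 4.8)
The plan is to realize $w^u$ as the unique fixed point of a graph transform acting on horizontal discs, and then to identify its graph with the local unstable manifold of $p^{\ast}$ in $B$. Translate so that $p^{\ast}=0$, write $p=(p_u,p_s)\in\mathbb{R}^u\times\mathbb{R}^s$, and let $\mathcal{D}$ be the set of horizontal discs $h:B_u\to B$ with $0\in h(B_u)$; equivalently $h=(\mathrm{id},\eta)$ for a Lipschitz-$L$ map $\eta:B_u\to B_s$ with $\eta(0)=0$, and $\mathcal{D}$ is complete under the sup-norm of the $\eta$'s. Two structural facts drive the argument. First, the cone condition (\ref{eq:cone-cond-hyp}) makes $f$ send horizontal discs to graph-like curves: if $q_1=f(p_1)$ and $q_2=f(p_2)$ lie on $f(h(B_u))$ then $p_2\in Q(p_1)$, since two points of a Lipschitz-$L$ graph satisfy this, hence $q_2\in Q(q_1)$ by (\ref{eq:cone-cond-hyp}); as $Q(\cdot)$ depends only on the difference this relation is symmetric, so $\pi_u$ is injective on $f(h(B_u))$ and the image is again a Lipschitz-$L$ graph over $\pi_u(f(h(B_u)))$. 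Second, the expansion (\ref{eq:expansion-hyp}) applied at the endpoints of $h$, which lie in $(Q(0)\cap B)\setminus\{0\}$, drives the $\pi_u$-extent of $f(h(B_u))$ out past $B_u$, so that $f(h(B_u))\cap B$ contains a curve spanning $B_u$ in the unstable direction.

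These two facts let me define $\mathcal{T}:\mathcal{D}\to\mathcal{D}$ by letting $\mathcal{T}(h)$ be the horizontal disc over $B_u$ carved out of the component of $f(h(B_u))\cap B$ through $0$. I would then verify that $\mathcal{T}$ is a sup-norm contraction: for $h_1,h_2\in\mathcal{D}$ and $x\in B_u$, the points $\mathcal{T}(h_1)(x)$ and $\mathcal{T}(h_2)(x)$ are $f$-images of points $q_i\in h_i(B_u)$ sharing the $\pi_u$-coordinate $x$; the unstable expansion, with the cone condition bounding slopes, forces the $\pi_u$-coordinates of $q_1,q_2$ to be close, while the stable directions contract, so the $\pi_s$-discrepancy at $x$ is a fixed fraction of $\sup_{B_u}\|\eta_1-\eta_2\|$. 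This is exactly the quantitative estimate developed in \cite{Cap-Lyap,Zgliczynski-cone-cond}. By the Banach fixed point theorem there is a unique $w^u\in\mathcal{D}$ with $\mathcal{T}(w^u)=w^u$; the fixed-point relation gives $w^u(B_u)\subset f(w^u(B_u))$, so every point of $w^u(B_u)$ has a full backward orbit contained in $w^u(B_u)\subset Q(0)\cap B$.

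Now the two inclusions. For $\mathrm{graph}(w^u)\subset W^u(p^{\ast})$: a point $p\in w^u(B_u)\setminus\{0\}$ has a backward orbit in $w^u(B_u)\subset (Q(0)\cap B)\setminus\{0\}$, so (\ref{eq:expansion-hyp}) applies at every step and gives $\|f^{-1}(q)\|<m^{-1}\|q\|$; iterating, $\|f^{-n}(p)\|<m^{-n}\|p\|\to 0$, whence $p\in W^u(p^{\ast})$. For the reverse inclusion and the cone bound: the cone condition at $0$ (the case $p_1=0$ of (\ref{eq:cone-cond-hyp})) together with $|\lambda|>1$ places the unstable eigenspace in the interior of $Q(0)$, so $W^u(p^{\ast})$, being $C^1$ and tangent to it at $0$, lies in $Q(0)$ near $0$; if $p\in W^u(p^{\ast})\cap B$ has backward orbit in $B$, then $f^{-n}(p)\to 0$ forces $f^{-n}(p)\in Q(0)$ for large $n$, and (\ref{eq:cone-cond-hyp}) propagates this forward to $p\in Q(0)$. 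Hence $W^u(p^{\ast})\subset Q(p^{\ast})$, and the same propagation plus the spanning property from the expansion shows $W^u(p^{\ast})\cap B$ is a horizontal disc invariant under $\mathcal{T}$, hence equal to $w^u(B_u)$ by uniqueness.

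The main obstacle, I expect, is in the second paragraph: making the transform genuinely well defined --- the spanning/covering property hinges on quantifying how strongly the expansion rate $m$ dominates the cone width $L$ (possibly only after iterating $f$ several times) --- and then checking the sup-norm contraction with an explicit constant. In other words, the real content is turning the qualitative hypotheses (\ref{eq:cone-cond-hyp})--(\ref{eq:expansion-hyp}) into the quantitative graph-transform estimates of \cite{Cap-Lyap}. A lesser but necessary point is the bookkeeping separating the global unstable manifold from its portion inside $B$ and controlling behaviour near $\partial B$, which is handled throughout by the ``full backward orbit in $B$'' characterization.
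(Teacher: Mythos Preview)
The paper does not supply its own proof of this lemma: it is stated with the citation \cite{Cap-Lyap} and used as a black box, so there is nothing to compare against directly. Your graph-transform outline is the standard route for results of this type and is consistent with the approach in the cited references \cite{Cap-Lyap,Zgliczynski-cone-cond}; in particular, the two structural ingredients you isolate --- cone preservation (\ref{eq:cone-cond-hyp}) to keep images graph-like, and expansion (\ref{eq:expansion-hyp}) to ensure the image spans $B_u$ --- are exactly what those papers use. Your own assessment of the gaps is accurate: the substantive work is the quantitative spanning and contraction estimates, which you have deferred rather than carried out, and the boundary bookkeeping. As a sketch pointing to the literature this is fine; as a self-contained proof it is incomplete precisely where you say it is.
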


We now discuss validation of the assumptions (\ref{eq:cone-cond-hyp}), (%
\ref{eq:expansion-hyp}) of Lemma \ref{lem:wu}. To verify (\ref%
{eq:cone-cond-hyp}) we  use Lemma \ref{lem:cone-cond} (taking $M=B$ and $%
a_{1}=a_{2}=L$). To verify (\ref{eq:expansion-hyp}) we use the following
lemma.

\begin{lemma}
\cite{Cap-Lyap}Assume that%
\begin{equation*}
\left[ Df\left( B\right) \right] =\left( 
\begin{array}{ll}
\mathbf{A} & \mathbf{C}_{12} \\ 
\mathbf{C}_{21} & \mathbf{B}%
\end{array}%
\right) ,
\end{equation*}%
where $\mathbf{A}=\left[ a_{1},a_{2}\right] $ is a closed interval, $\mathbf{%
B}$, $\mathbf{C}_{12}$ and $\mathbf{C}_{21}$ are $s\times s$, $1\times s$
and $s\times 1$ interval matrices, respectively. If 
\begin{equation*}
a_{1}-\left\Vert \mathbf{C}_{12}\right\Vert L>m\sqrt{1+L^{2}}
\end{equation*}%
then for any $p\in (Q(p^{\ast })\cap B)\setminus \{p^{\ast }\}$ we have (\ref%
{eq:expansion-hyp}).
\end{lemma}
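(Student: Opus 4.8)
The plan is to estimate the quantity $\|f(p)-p^\ast\|$ from below for $p\in (Q(p^\ast)\cap B)\setminus\{p^\ast\}$ by writing $p=p^\ast+(q_u,q_s)$ with $\|q_s\|\le L|q_u|$ (this is the cone condition, and note $q_u\neq 0$ since otherwise $q_s=0$ and $p=p^\ast$). Since $p^\ast$ is a fixed point, $f(p)-p^\ast = f(p)-f(p^\ast)$, and by Lemma \ref{lem:Df-difference} we may choose $A\in[Df(B)]$ with $f(p)-f(p^\ast)=A(p-p^\ast)=A(q_u,q_s)^\top$. First I would extract the unstable component: using the block decomposition $A=\begin{pmatrix}a & \mathbf{c}_{12}\\ \mathbf{c}_{21} & B_0\end{pmatrix}$ with $a\in[a_1,a_2]$, $\mathbf{c}_{12}\in\mathbf{C}_{12}$ etc., the first coordinate of $f(p)-p^\ast$ is $a\,q_u + \mathbf{c}_{12} q_s$, so
\begin{equation*}
|\pi_u(f(p)-p^\ast)| \geq a_1|q_u| - \|\mathbf{C}_{12}\|\,\|q_s\| \geq \big(a_1 - \|\mathbf{C}_{12}\| L\big)|q_u|,
\end{equation*}
where the last step uses $\|q_s\|\le L|q_u|$ and the hypothesis $a_1-\|\mathbf{C}_{12}\|L>0$ (which is implied by the assumed inequality since $m\sqrt{1+L^2}>0$), so in particular this lower bound is positive.

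Next I would relate this unstable-component bound to the full Euclidean norm. On one hand $\|f(p)-p^\ast\|\ge |\pi_u(f(p)-p^\ast)|$. On the other hand, since $\|q_s\|\le L|q_u|$ we have $\|p-p^\ast\|=\sqrt{q_u^2+\|q_s\|^2}\le\sqrt{1+L^2}\,|q_u|$, equivalently $|q_u|\ge \|p-p^\ast\|/\sqrt{1+L^2}$. Combining these,
\begin{equation*}
\|f(p)-p^\ast\| \geq \big(a_1 - \|\mathbf{C}_{12}\|L\big)|q_u| \geq \frac{a_1-\|\mathbf{C}_{12}\|L}{\sqrt{1+L^2}}\,\|p-p^\ast\|,
\end{equation*}
and the hypothesis $a_1-\|\mathbf{C}_{12}\|L > m\sqrt{1+L^2}$ gives exactly $\frac{a_1-\|\mathbf{C}_{12}\|L}{\sqrt{1+L^2}} > m$, which yields \eqref{eq:expansion-hyp}.

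This argument is essentially a chain of elementary estimates, so there is no serious obstacle; the only point requiring a little care is making sure $q_u\neq 0$ so that dividing is legitimate and the cone inequality is being used in the correct direction (bounding $\|q_s\|$ by $L|q_u|$, not the reverse). One should also note that the result as stated in the excerpt is a known lemma from \cite{Cap-Lyap}, so the proof here is just the short self-contained verification; I would keep it to the few lines above rather than reproving the surrounding machinery.
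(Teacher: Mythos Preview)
Your argument is correct. The paper does not actually supply a proof of this lemma; it is simply quoted from \cite{Cap-Lyap}. Your self-contained verification via Lemma~\ref{lem:Df-difference}, projecting onto the unstable coordinate and combining the cone bound $\|q_s\|\le L|q_u|$ with $\|p-p^\ast\|\le\sqrt{1+L^2}\,|q_u|$, is exactly the intended elementary estimate and matches the argument in the cited reference.
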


\subsection{Establishing heteroclinic connections\label%
{sec:heteroclinic-subsection}}
In this Section we combine the results of Sections \ref{sec:focus}, \ref{sec:Wu} to
obtain a heteroclinic orbit between two fixed points of a map $f:\mathbb{R}%
^{n}\rightarrow\mathbb{R}^{n}$ in the special case that one of the fixed points 
is attracting.  The existence of the attracting fixed point is established using the tools from 
Section \ref{sec:focus}. The other fixed point is hyperbolic, and has a one dimensional 
unstable manifold, as in Section \ref{sec:Wu}. The next theorem is used to establish 
homoclinic connections between such two points.
\correction{comment 4}{Computer assisted methods of proof for more general 
configurations are discussed in }
\cite{MR3461310,MR3281845,MR3068557,MR3207723}.

\begin{theorem}
\label{th:connecting-curve} Let $f:\mathbb{R}^{n}\rightarrow\mathbb{R}^{n}$
be $C^{1}$ and $B^{1},B^{2}\subset\mathbb{R}^{n}$ be two sets which are
cartesian products of closed intervals in $\mathbb{R}^n$. Assume that 
the set $B^{1}$ satisfies the assumptions of Lemma \ref{lem:contraction-main}. 
(that is, the assumptions hold for $B=B^{1}$.)

Assume also that $p_{2}^{\ast }\in
B^{2}=B_{u}\times B_{s}$ 
is a hyperbolic fixed point and that the
assumptions of Lemma \ref{lem:wu} are
satisfied.

\begin{enumerate}
\item If there exists an $n\geq 0$ and $\bar{x}\in B_{u}$ such that $%
f^{n}(Q\left( p_{2}^{\ast }\right) \cap \left\{ p:\pi _{u}p=\bar{x}\right\}
)\subset B^{1}$, then there exists an attracting fixed point $p_{1}^{\ast
}\in B^{1}$ and a homoclinic orbit from $p_{1}^{\ast }$ to $p_{2}^{\ast }$.

\item If there exists an $n\geq 0$, an interval $I\subset B_{u}$, and an $%
\bar{x}\in I$ such that $\pi _{u}f\left( Q\left( p_{2}^{\ast }\right) \cap
\left\{ p:\pi _{u}p=\bar{x}\right\} \right) \subset I$ and $f^{n}(\left\{
p\in Q\left( p_{2}^{\ast }\right) :\pi _{u}p\in I\right\} )\subset B^{1}$,
then there exists a $C^{0}$ curve, invariant under $f$, which joins $%
p_{1}^{\ast }$ and $p_{2}^{\ast }$.
\end{enumerate}
\end{theorem}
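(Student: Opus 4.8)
The plan is to handle the two items in turn, in each case combining the three ingredients already set up: the attracting fixed point in $B^1$ coming from Lemma~\ref{lem:contraction-main}, the horizontal disc description of $W^u(p_2^*)$ coming from Lemma~\ref{lem:wu}, and a ``propagate a piece of the unstable manifold into the basin of attraction'' step. First I would invoke Lemma~\ref{lem:contraction-main} on $B=B^1$ to produce the attracting fixed point $p_1^*\in B^1$ together with the fact that every point of $B^1$ converges to $p_1^*$ under forward iteration; this is the target of both connecting orbits. Next, Lemma~\ref{lem:wu} applied to $B^2=B_u\times B_s$ gives that $W^u(p_2^*)\subset Q(p_2^*)$ and that $W^u(p_2^*)$ is the graph of a horizontal disc $w^u\colon B_u\to B^2$; in particular, for each $\bar x\in B_u$ the slice $Q(p_2^*)\cap\{\pi_u p=\bar x\}$ is a compact set whose forward image under $f^n$ is what the hypotheses control.

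For item (1): I would fix $\bar x\in B_u$ as in the hypothesis and note that the single point $w^u(\bar x)$ lies in $Q(p_2^*)\cap\{\pi_u p=\bar x\}$, hence $f^n(w^u(\bar x))\in B^1$. Since $w^u(\bar x)\in W^u(p_2^*)$ we have $f^{-k}(w^u(\bar x))\to p_2^*$ as $k\to\infty$ (this is the defining property of the unstable manifold and the fact that $w^u$ parameterizes it, with the unstable eigenvalue satisfying $|\lambda|>1$ so that backward iterates contract toward the fixed point along the disc). On the other side, $f^n(w^u(\bar x))\in B^1$ implies $f^{n+j}(w^u(\bar x))\to p_1^*$ as $j\to\infty$ by the attracting property of $B^1$. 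Therefore the full orbit $\{f^m(w^u(\bar x))\}_{m\in\mathbb{Z}}$ is a heteroclinic orbit from $p_2^*$ to $p_1^*$ (the statement says ``homoclinic''; I would keep the paper's wording but the meaning is a connection between the two distinct fixed points).

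For item (2): the extra input is the interval $I\subset B_u$ with $\pi_u f(Q(p_2^*)\cap\{\pi_u p=\bar x\})\subset I$ and $f^n(\{p\in Q(p_2^*):\pi_u p\in I\})\subset B^1$. The idea is that the ``fattened slice'' $S:=\{p\in Q(p_2^*):\pi_u p\in I\}$ contains a whole nontrivial arc $\Gamma$ of the unstable manifold, namely $\Gamma=w^u(I)=W^u(p_2^*)\cap S$, which is a $C^0$ curve running from a point near $p_2^*$ to the boundary of $S$; by cone-invariance (\ref{eq:cone-cond-hyp}) and the invariance of $W^u$, $f(\Gamma)$ is again a curve lying in $Q(p_2^*)$, and the condition $\pi_u f(Q(p_2^*)\cap\{\pi_u p=\bar x\})\subset I$ ensures that iterating does not immediately expel the arc from the region where the slice hypothesis applies, so some forward iterate $f^n(\Gamma)\subset B^1$. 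Concatenating: the closure of $W^u(p_2^*)$ together with all its forward images that land in $B^1$ forms an $f$-invariant set; the local unstable manifold arc limits onto $p_2^*$ backward, and everything that enters $B^1$ limits onto $p_1^*$ forward. Taking $h^*$ to be (a parameterization of) the union of the arc $w^u([\,\cdot\,])$ with its forward orbit and with $\{p_1^*\}$ as the accumulation endpoint gives a $C^0$ curve, invariant under $f$, joining $p_1^*$ and $p_2^*$; continuity at the $p_1^*$ end follows because $f^n(\Gamma)\subset B^1$ and $B^1$ is a trapping cube contracting to $p_1^*$, while continuity at the $p_2^*$ end is the continuity of $w^u$ at the fixed point.

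The main obstacle I anticipate is item (2): verifying that the pieced-together object is genuinely a \emph{curve} (a continuous injective image of an interval, or at least a continuous arc) rather than just an invariant set, and in particular that the forward iterates $f^n(\Gamma)$ glue continuously onto $\Gamma$ and onto the limit point $p_1^*$ without the curve self-intersecting or failing to close up. This is where one must use that $W^u(p_2^*)$ restricted to the slice is a single graph over $I$ (so it is an embedded arc), that $f$ is a homeomorphism (so it carries embedded arcs to embedded arcs), and that the trapping/contraction on $B^1$ forces the tail to converge to $p_1^*$ monotonically enough to append a single limit point; the bookkeeping of how the arc's endpoint in $\{\pi_u p=\bar x\}$ maps under $f$ into $I$, keeping the iteration inside the controlled region until step $n$, is the delicate part, and I would spell it out carefully rather than waving at it.
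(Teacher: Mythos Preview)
Your approach matches the paper's essentially step for step. Item~(1) is identical: pick the point $w^u(\bar x)$ in the slice, push it forward into $B^1$, and use attraction on one side and the unstable-manifold property on the other.

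For item~(2) the paper sharpens the ``delicate part'' you flag by working with an explicit fundamental domain rather than all of $w^u(I)$. Concretely, set $x_1:=\bar x$ and $x_2:=\pi_u f(w^u(\bar x))\in I$; then $f(w^u(x_1))=w^u(x_2)$, so the arc $w^u([x_1,x_2])$ joins $w^u(\bar x)$ to its image $f(w^u(\bar x))$. The iterates $f^k(w^u([x_1,x_2]))$ therefore share endpoints consecutively, making
\[
\gamma_N:=w^u(B_u)\cup\bigcup_{k=1}^{N}f^k\bigl(w^u([x_1,x_2])\bigr)
\]
a connected curve for every $N$. Since $[x_1,x_2]\subset I$, the hypothesis gives $f^n(w^u([x_1,x_2]))\subset B^1$, after which contraction in $B^1$ forces the tail to shrink to $p_1^*$; adjoining $\{p_1^*\}$ to $\bigcup_N\gamma_N$ yields the desired $C^0$ invariant curve. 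This fundamental-domain device is exactly the bookkeeping you anticipated having to spell out, and it removes the need to track whether later iterates remain in $Q(p_2^*)$ or in $B^2$.
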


\begin{proof}
We start by proving the first claim. We have $B_{u}\times B_{s}=B^{2}$ and
by Lemma \ref{lem:wu} there exists the function $w^{u}:B_{u}\rightarrow B^{2}
$ which parameterizes the unstable manifold of $p_{2}^{\ast }$. Since $w^{u}$
is a horizontal disc, it has the properties that $\pi _{u}w^{u}(x)=x$ and
that for any $x\in B_{u}$, $w^{u}\left( B_{u}\right) \subset Q\left(
w^{u}(x)\right) $; in particular $w^{u}\left( B_{u}\right) \subset Q\left(
p_{2}^{\ast }\right) $. This means that $w^{u}\left( \bar{x}\right) \in
Q\left( p_{2}^{\ast }\right) \cap \left\{ p:\pi _{u}p=\bar{x}\right\} $.
This by the assumption of our lemma implies that 
\begin{equation*}
f^{n}(w^{u}\left( \bar{x}\right) )\subset f^{n}\left( Q\left( p_{2}^{\ast
}\right) \cap \left\{ p:\pi _{u}p=\bar{x}\right\} \right) \subset B^{1}.
\end{equation*}
By Lemma \ref{lem:contraction-main} the point $p_{1}^{\ast }$ is attracting
in $B^{1}$, which means that \[\lim_{k\rightarrow +\infty}f^{k}(w^{u}\left( \bar{x}\right) )=p_{1}^{\ast }.\] Since $w^{u}$
parameterizes the unstable manifold of $p_{2}^{\ast }$ we also have \[
\lim_{k\rightarrow -\infty }f^{k}(w^{u}\left( \bar{x}\right) )=p_{2}^{\ast },\]
 which concludes the proof of the first claim.

To prove the second claim observe that $w^{u}\left( \bar{x}\right) \in
Q\left( p_{2}^{\ast }\right) \cap \left\{ p:\pi _{u}p=\bar{x}\right\} $, so 
\begin{equation*}
\pi _{u}f\left( w^{u}\left( \bar{x}\right) \right) \subset \pi _{u}f\left(
Q\left( p_{2}^{\ast }\right) \cap \left\{ p:\pi _{u}p=\bar{x}\right\}
\right) \subset I.
\end{equation*}%
Let $x_{1}:=\bar{x}$ and $x_{2}:=\pi _{u}f\left( w^{u}\left( \bar{x}\right)
\right) $. The curve $w^{u}\left( \left[ x_{1},x_{2}\right] \right) $ is a
fragment of the unstable manifold, which joins the point $w^{u}\left( \bar{x}%
\right) $ with the point $f\left( w^{u}\left( \bar{x}\right) \right) $. This
means that for any $N\in \mathbb{N}$ we can define a continuous curve 
\begin{equation*}
\gamma _{N}:=w^{u}\left( B_{u}\right) \cup
\bigcup_{k=1}^{N}f^{k}(w^{u}\left( \left[ x_{1},x_{2}\right] \right) ),
\end{equation*}%
which coincides with a fragment of the unstable manifold. (The larger the $N$
the larger the fragment). From our assumption%
\begin{equation*}
f^{n}(w^{u}\left( \left[ x_{1},x_{2}\right] \right) \subset f^{n}(\left\{
p\in Q\left( p_{2}^{\ast }\right) :\pi _{u}p\in I\right\} )\subset B^{1}.
\end{equation*}
Since $f$ is contracting on $B^{1}$, $f^{k}(w^{u}\left( \left[ x_{1},x_{2}%
\right] \right) $ converge to $p_{1}^{\ast }$ as $k$ tends to infinity. This
means that 
\begin{equation*}
\gamma :=\bigcup_{N=1}^{\infty }\gamma _{N}\cup \left\{ p_{1}^{\ast
}\right\} 
\end{equation*}%
is a continuous curve joining $p_{1}^{\ast }$ and $p_{2}^{\ast }$, as
required.
\end{proof}

%TCIDATA{Version=5.00.0.2606}
%TCIDATA{LaTeXparent=0,0,MMFedit.tex}

\section{Attracting invariant tori of ODEs in $\mathbb{R}^{3}$\label%
{sec:tori-3d}}

Consider a $C^{l}$, $l\ge 1$ vector field $F:\mathbb{R}^{3}\rightarrow \mathbb{R}^{3}$.
We are interested in the dynamics of the ODE%
\begin{equation}
x^{\prime }=F\left( x\right) .  \label{eq:ode-3d}
\end{equation}%
Our goal is to establish two types of invariant tori for the
flow of (\ref{eq:ode-3d}). First, an attracting torus which is
either $C^{k}$ smooth, with $k\le l$, or Lipschitz. The second 
is a torus that results from homoclinic connections of stable/unstable 
manifolds of periodic orbits.

Both types of tori are established by considering a section 
$\Sigma \subset \mathbb{R}^{3}$ and a section to section map $P:\Sigma \rightarrow
\Sigma $ induced by the flow of the ODE. The first type of torus follows from the construction of  invariant
curves by taking $f=P$ and using the tools from Section \ref%
{sec:contractiing-maps}. The second type follows from homoclinic
connections between $m$-periodic orbits of $P$, which are established by
taking $f=P^{m}$ and using the methods of Section \ref{sec:homoclinic-maps}.

The following theorem ensures that the invariant
circle established using tools from Section \ref{sec:contractiing-maps}
leads to an invariant Lipschitz torus.
Let $\Phi _{t}$ denote the flow induced by (\ref{eq:ode-3d}).

\begin{theorem}
\label{th:Lip-torus} Assume that $h^{\ast }:\mathbb{S}^{1}\rightarrow \Sigma 
$ is a closed invariant curve (invariant for $f=P$). Let 
\begin{equation*}
\mathcal{T}:=\left\{ \Phi _{t}(v):v\in h^{\ast }(\mathbb{S}^{1}),t\in 
\mathbb{R}\right\} .
\end{equation*}

\begin{enumerate}
\item If $h^{\ast }$ satisfies cone conditions
 (in the sense of Definition \ref{def:closed-curve-cone-cond}) 
 then $\mathcal{T}$ is a (two
dimensional) Lipschitz invariant torus for (\ref{eq:ode-3d}). 

\item If $h^{\ast }$ is $C^{k}$ then $\mathcal{T}$ is a $C^{k}$ invariant
torus for (\ref{eq:ode-3d}).
\end{enumerate}
\end{theorem}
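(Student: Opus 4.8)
The plan is to realize $\mathcal{T}$ as a suspension of the invariant curve $h^*$ under the return map $P$, and then to transport the local graph/cone structure of $h^*$ in the section $\Sigma$ to local graph structure of $\mathcal{T}$ in $\mathbb{R}^3$ using the flow $\Phi_t$. The key geometric facts are that $F$ is transverse to $\Sigma$ along $h^*(\mathbb{S}^1)$ (so the flow direction is a genuine extra direction not tangent to $\Sigma$), that $\Phi_t$ is a $C^l$ diffeomorphism for each fixed $t$ and depends $C^l$ on $t$, and that the first return time $\tau:\Sigma \supset V \to \mathbb{R}$ is well-defined and as smooth as the data near $h^*(\mathbb{S}^1)$. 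Invariance of $\mathcal{T}$ is immediate from the definition: $\Phi_s(\Phi_t(v)) = \Phi_{s+t}(v)$, and since $P(v) = \Phi_{\tau(v)}(v) \in h^*(\mathbb{S}^1)$ whenever $v \in h^*(\mathbb{S}^1)$, every flow line through a point of $h^*(\mathbb{S}^1)$ stays in the union of flow lines through $h^*(\mathbb{S}^1)$. That $\mathcal{T}$ is a compact topological 2-torus follows because $h^*(\mathbb{S}^1)$ is a compact circle on which $P$ acts as a homeomorphism (it is the continuous injective image of $\mathbb{S}^1$, and injectivity uses that $h^*$ is a graph over the angular variable together with invariance), and $\mathcal{T}$ is the mapping torus of $(h^*(\mathbb{S}^1), P|_{h^*(\mathbb{S}^1)})$, hence homeomorphic to $\mathbb{S}^1 \times \mathbb{S}^1$.

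For claim (1) I would verify the cone-condition definition of a Lipschitz manifold (Definition \ref{def:lip-manifold}) with $k=2$, $n=3$. Fix a point $w \in \mathcal{T}$, write $w = \Phi_{t_0}(v_0)$ with $v_0 \in h^*(\mathbb{S}^1)$, and choose the local chart $\gamma_i$ in $\Sigma$ near $v_0$ supplied by the cone condition on $h^*$. Build a chart $\Gamma_U: B \to U \subset \mathbb{R}^3$ near $w$ by composing $\gamma_i$ with the flow: $\Gamma_U(x,y,t) := \Phi_{t_0 + t}(\gamma_i(x,y))$, so that the third coordinate is the flow time. This is a $C^1$ (indeed $C^l$) diffeomorphism onto a neighborhood of $w$ precisely because $F$ is transverse to $\Sigma$. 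In these coordinates $\mathcal{T}$ locally is $\{(x,y,t) : (x,y) \in \gamma_i^{-1}(h^*(\mathbb{S}^1)), \ t \text{ arbitrary}\}$; since $h^*$ satisfies the planar cone condition \eqref{eq:h-cc}, $\gamma_i^{-1}(h^*(\mathbb{S}^1))$ lies in a $2$-D cone $Q_i(\cdot)$ around each of its points, and taking the product with the free $t$-direction gives the required $3$-D cone $\mathbf{Q}_2$ around each point of $\mathcal{T} \cap U$, after adjusting the cone constant $a$ to absorb the (finitely many, uniformly bounded) changes of chart and the boundedness of the return time. This is the routine but slightly fiddly bookkeeping step. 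The main obstacle is making this cone transport uniform: one must check that the cone aperture needed in the $3$-D chart can be chosen independently of $w$, which uses compactness of $h^*(\mathbb{S}^1)$, the finiteness of the cover $\{U_i\}$, uniform transversality of $F$ to $\Sigma$ along $h^*(\mathbb{S}^1)$, and a uniform bound on the return time $\tau$ and on $D\Phi_t$ for $t$ in a bounded interval.

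For claim (2), if $h^*$ is $C^k$ then $\gamma_i^{-1}(h^*(\mathbb{S}^1))$ is locally a $C^k$ graph, and with $k \le l$ the flow map $(x,y,t) \mapsto \Phi_{t_0+t}(\gamma_i(x,y))$ is $C^k$; hence in the chart $\Gamma_U$ the set $\mathcal{T}$ is locally the $C^k$ graph $\{(x, g(x), t)\}$ for the $C^k$ function $g$ describing $h^*$, with $t$ free. Thus $\mathcal{T}$ is a $C^k$ embedded submanifold. Globally, the suspension construction glues these charts by $\Phi_{\tau}$ on overlaps, which is $C^k$ (again using $k \le l$), so the $C^k$ structure is consistent and $\mathcal{T}$ is a $C^k$ torus. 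One should remark that here the constraint $k \le l$ enters: the torus can be no smoother than the vector field, and on $\mathbb{S}^1$ the embedding inherits the regularity of $h^*$ but the suspension direction inherits only the regularity of $\Phi$, so the global regularity is $\min(k, l) = k$.
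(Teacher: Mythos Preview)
Your proposal is correct and follows essentially the same approach as the paper: build a 3D chart near $w=\Phi_{t_0}(v_0)$ by composing the planar chart $\gamma_i$ with the flow, so that in these coordinates $\mathcal{T}$ is locally the product of the planar curve with a free time coordinate, and the 2D cone condition on $h^*$ lifts directly to the required 3D cone condition of Definition~\ref{def:lip-manifold}. The only refinement in the paper's version is that the rescaling is built into the chart itself via $\tilde{\gamma}_i(x_1,x_2,x_3):=\Phi_{t+x_2}\bigl(\gamma_i(a_i x_1,x_3)\bigr)$, which normalizes the cone constant to $a=1$ in every chart and thereby replaces your uniformity/compactness bookkeeping with a single explicit chain of inequalities.
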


\begin{proof}
The set $\mathcal{T}$ is a torus by construction. So we  need to show that
it is Lipschitz in the sense of Definition \ref{def:lip-manifold}.

Take $p=\Phi _{t}\left( v\right) \in \mathcal{T}$. Assume that $v\in U_{i}$,
meaning that $v\in \Sigma $ is in the local coordinates given by $\gamma
_{i} $ on $\Sigma $. (Throughout the reminder of the proof the $p,v$ and $t$
shall remain fixed.) We extend $\gamma _{i}$ to a neighborhood of $p$
by defining 
\begin{equation*}
\tilde{\gamma}_{i}\left( x_{1},x_{2},x_{3}\right) :=\Phi _{t+x_{2}}\left(
\gamma _{i}\left( a_{i}x_{1},x_{3}\right) \right) .
\end{equation*}%
(Note that in $\tilde{\gamma}_{i}$ we have added a rescaling on the
coordinate $x_{1}$. The $a_i$ used for the rescaling are the parameters from 
the cones $Q_i$ in (\ref{eq:cones-Lip-torus}).) Take a small ball $B$ around $\tilde{\gamma}%
_{i}^{-1}\left( p\right) $ and define $U:=\tilde{\gamma}_{i}\left( B\right) $
and 
\begin{equation*}
\gamma _{U}:=\tilde{\gamma}_{i}|_{B}.
\end{equation*}
Above $U$ and $\gamma_U$ are those needed for Definition \ref{def:lip-manifold}.

We need that 
\begin{equation*}
U\cap \mathcal{T}\subset \gamma _{U}\left( \mathbf{Q}\left( \gamma
_{U}^{-1}\left( \bar{q}_{1}\right) \right) \cap B\right) .
\end{equation*}%
This is equivalent to the condition that for any $q_{1},q_{2}\in U\cap \mathcal{T}
$%
\begin{equation}
\left\Vert \pi _{x_{1},x_{2}}\left( \gamma _{U}^{-1}\left( q_{1}\right)
-\gamma _{U}^{-1}\left( q_{2}\right) \right) \right\Vert \geq \left\Vert \pi
_{x_{3}}\left( \gamma _{U}^{-1}\left( q_{1}\right) -\gamma _{U}^{-1}\left(
q_{2}\right) \right) \right\Vert , \label{eq:lip-temp-prf}
\end{equation}%
which is what we show below.

Before proving (\ref{eq:lip-temp-prf}) we make the following 
auxiliary observation. Consider first $\bar{q}_{1},\bar{q}_{2}\in \mathcal{T}\cap \Sigma $. (Here
we do not need $\bar{q}_{1},\bar{q}_{2}$ to be in $U$. In fact, if $p$ is
far from $\Sigma $ such $\bar{q}_{1},\bar{q}_{2}$ will not be in $U$.) Since 
$\bar{q}_{1},\bar{q}_{2}\in h^{\ast }(\mathbb{S}^{1})$ from the fact that $%
h^{\ast }$ satisfies cone conditions it follows that $q_{2}\notin \tilde{Q}%
_{i}\left( q_{1}\right) $ hence 
\begin{equation}
\bar{q}_{2}\notin \tilde{Q}_{i}\left( q_{1}\right) =\gamma _{i}\left(
Q_{i}\left( \gamma _{i}^{-1}\left( \bar{q}_{1}\right) \right) \right) =%
\tilde{\gamma}_{i}\left( \mathbf{Q}\left( \tilde{\gamma}_{i}^{-1}\left( \bar{%
q}_{1}\right) \right) \right) \cap \Sigma =\gamma _{U}\left( \mathbf{Q}%
\left( \gamma _{U}^{-1}\left( \bar{q}_{1}\right) \right) \right) \cap \Sigma
.  \label{eq:cone-bar-alignment}
\end{equation}%
Since $\bar{q}_{1},\bar{q}_{2}\in \Sigma $ 
\begin{equation*}
\pi _{x_{2}}\tilde{\gamma}_{i}^{-1}\left( \bar{q}_{1}\right) =\pi _{x_{2}}%
\tilde{\gamma}_{i}^{-1}\left( \bar{q}_{2}\right) =-t,
\end{equation*}%
so (\ref{eq:cone-bar-alignment}) implies%
\begin{equation}
\left\Vert \pi _{x_{1},x_{2}}\left( \gamma _{U}^{-1}\left( \bar{q}%
_{1}\right) -\gamma _{U}^{-1}\left( \bar{q}_{2}\right) \right) \right\Vert
\geq \left\Vert \pi _{x_{3}}\left( \gamma _{U}^{-1}\left( \bar{q}_{1}\right)
-\gamma _{U}^{-1}\left( \bar{q}_{2}\right) \right) \right\Vert .
\label{eq:Lip-on-Sigma}
\end{equation}

Then we are ready to show (\ref{eq:lip-temp-prf}).
Take $q_{1},q_{2}\in U\cap \mathcal{T}$ where $q_{1}=\Phi _{t_{1}}\left( 
\bar{q}_{1}\right) ,$ $q_{2}=\Phi _{t_{2}}\left( \bar{q}_{2}\right) \ $for $%
\bar{q}_{1},\bar{q}_{2}\in \mathcal{T}\cap \Sigma $ and some $t_{1},t_{2}\in 
\mathbb{R}$. By (\ref{eq:Lip-on-Sigma}) we obtain%
\begin{eqnarray*}
\left\Vert \pi _{x_{1},x_{2}}\left( \gamma _{U}^{-1}\left( q_{1}\right)
-\gamma _{U}^{-1}\left( q_{2}\right) \right) \right\Vert  &\geq &\left\Vert
\pi _{x_{1}}\left( \gamma _{U}^{-1}\left( q_{1}\right) -\gamma
_{U}^{-1}\left( q_{2}\right) \right) \right\Vert  \\
&=&\left\Vert \pi _{x_{1}}\left( \gamma _{U}^{-1}\left( \bar{q}_{1}\right)
-\gamma _{U}^{-1}\left( \bar{q}_{2}\right) \right) \right\Vert  \\
&=&\left\Vert \pi _{x_{1},x_{2}}\left( \gamma _{U}^{-1}\left( \bar{q}%
_{1}\right) -\gamma _{U}^{-1}\left( \bar{q}_{2}\right) \right) \right\Vert 
\\
&\geq &\left\Vert \pi _{x_{3}}\left( \gamma _{U}^{-1}\left( \bar{q}%
_{1}\right) -\gamma _{U}^{-1}\left( \bar{q}_{2}\right) \right) \right\Vert 
\\
&=&\left\Vert \pi _{x_{3}}\left( \gamma _{U}^{-1}\left( q_{1}\right) -\gamma
_{U}^{-1}\left( q_{2}\right) \right) \right\Vert ,
\end{eqnarray*}%
as required.

The second claim follows directly from the fact that $(t,x)\rightarrow \Phi
_{t}(x)$ is $C^{k}$.
\end{proof}

%\begin{remark}
%We say that $p \in \mathbb{R}^2$ is a period $k$ point for $P$ if 
%$p$ is a fixed point of $P^k$ but not fixed for any $P^j$ with $j < k$.
%Note that the orbit of $p$ is a collection of $k$ points in $\Sigma$.  
%Suppose now that $h^*$ from the previous theorem contains an attracting 
%$k$-periodic point $p_1^{\ast}$ as well as a hyperbolic $k$-periodic 
%point $p_2^{\ast}$.  Suppose in addition that
%$\gamma$ is the union of the orbits of $p_1^{\ast}$ and $p_2^{\ast}$ with 
%the iterates of a fundamental domain for the unstable manifold of the 
%orbit of $p_2^{\ast}$, and that
%the iterates of the fundamental  domain arrive in the basin of 
%attraction of $p_1^{\ast}$.  Note that a fundamental domain for the unstable manifold
%of the orbit of $p_2^{\ast}$ is comprised of $2k$ disjoint arcs.  Then  
%\begin{equation*}
%\mathcal{T}:=\left\{ \Phi _{t}(v):v\in \gamma ,t\in \mathbb{R}\right\}
%\end{equation*}%
%is a two dimensional torus invariant under the flow of (\ref{eq:ode-3d}).
%\end{remark}

The next result ensures that a homoclinic connection established using
the tools from Section \ref{sec:homoclinic-maps}  gives an invariant
torus for the ODE.
\begin{lemma}\label{lem:resonant-torus}
Assume that $p_{1}^{\ast }$ is a point on a contracting $k$-periodic orbit
of the Poincare map $P$ and $p_{2}^{\ast }$ is a point on a hyperbolic $k$%
-periodic orbit of $P$. If there exists a curve $\gamma $, invariant under $%
P^{k}$ (i.e. $P^{k}\left( \gamma \right) =\gamma $), which joins $%
p_{1}^{\ast }$ with $p_{2}^{\ast }$, \correction{comment 1}{ such that $\bigcup_{i=1}^{k}  P^i(\gamma)$ is a closed curve, }then 
\begin{equation*}
\mathcal{T}:=\left\{ \Phi _{t}(v):v\in \gamma ,t\in \mathbb{R}\right\}
\end{equation*}%
is a two dimensional torus invariant under the flow of (\ref{eq:ode-3d}).
\end{lemma}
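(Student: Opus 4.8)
The plan is to reduce the statement to the suspension of a single genuinely $P$-invariant closed curve and then to recognize the resulting surface as a mapping torus of a circle, which forces it to be $\mathbb{S}^1\times\mathbb{S}^1$. First I would set $\Gamma:=\bigcup_{i=1}^{k}P^i(\gamma)$. By hypothesis $\Gamma$ is a closed curve (homeomorphic to $\mathbb{S}^1$), and it is $P$-invariant: since $P^k(\gamma)=\gamma$ we have $P^{k+1}(\gamma)=P(\gamma)$, hence
\[
P(\Gamma)=\bigcup_{i=1}^{k}P^{i+1}(\gamma)=\Bigl(\bigcup_{i=2}^{k}P^i(\gamma)\Bigr)\cup P(\gamma)=\Gamma .
\]
Moreover, for $v\in\gamma$ one has $P^i(v)=\Phi_{T_i(v)}(v)$ with $T_i(v)>0$ the time of the $i$-th return to $\Sigma$, so $\Gamma\subset\{\Phi_t(v):v\in\gamma,\ t\ge 0\}$, and combined with $\gamma\subset\Gamma$ this gives $\mathcal{T}=\{\Phi_t(v):v\in\gamma,\ t\in\mathbb{R}\}=\{\Phi_t(w):w\in\Gamma,\ t\in\mathbb{R}\}$. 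In this second form invariance of $\mathcal{T}$ under the flow is immediate, since $\Phi_s(\mathcal{T})=\{\Phi_{t+s}(w):w\in\Gamma,\ t\in\mathbb{R}\}=\mathcal{T}$.

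Next I would parametrize $\mathcal{T}$ by the suspension of the circle homeomorphism $P|_{\Gamma}$. Let $\tau\colon\Gamma\to(0,\infty)$ be the first-return time of $P$, which is continuous and strictly positive because $\Gamma$ is a compact subset of the transversal section $\Sigma$. Form the mapping torus
\[
\mathcal{S}:=\bigl\{(w,s):w\in\Gamma,\ 0\le s\le\tau(w)\bigr\}\big/\bigl((w,\tau(w))\sim(P(w),0)\bigr),
\]
and define $\psi\colon\mathcal{S}\to\mathcal{T}$ by $\psi(w,s):=\Phi_s(w)$. Then $\psi$ is well defined (the identification is consistent with $P(w)=\Phi_{\tau(w)}(w)$), continuous, and surjective by the previous paragraph. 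The crucial point is injectivity: if $\Phi_s(w)=\Phi_{s'}(w')$ with $0\le s<\tau(w)$, $0\le s'<\tau(w')$ and, say, $s\le s'$, then $w=\Phi_{s'-s}(w')\in\Sigma$ while $0\le s'-s<\tau(w')$, so the first-return property forces $s'-s=0$ and hence $w=w'$. Thus $\psi$ is a continuous bijection from a compact space onto $\mathcal{T}$, hence a homeomorphism; transversality of $\Sigma$ to $F$ together with the local flow-box structure also shows that $\mathcal{T}$ is an embedded topological surface.

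Finally I would identify $\mathcal{S}$: as a mapping torus of a homeomorphism of $\mathbb{S}^1$ it is a closed surface of Euler characteristic $0$, hence either the torus or the Klein bottle; since $\mathcal{T}\cong\mathcal{S}$ is a closed surface embedded in $\mathbb{R}^3$ and the Klein bottle admits no embedding into $\mathbb{R}^3$, $\mathcal{S}$ is orientable, so $\mathcal{S}\cong\mathbb{S}^1\times\mathbb{S}^1$, which finishes the proof. The step I expect to need the most care is the injectivity of $\psi$, i.e.\ ruling out self-intersections of the flowed-out curve: this is precisely where the first-return (rather than merely ``some return'') nature of $P$ enters, and where one must use that $\Gamma$ is an honest embedded closed curve in $\Sigma$. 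The orientability/Klein-bottle dichotomy at the end is classical but should at least be flagged; alternatively one can avoid it by arguing directly that $P|_{\Gamma}$ is orientation-preserving, again using the flow structure transverse to $\Sigma$.
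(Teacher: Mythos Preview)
Your argument is correct and follows the same underlying idea as the paper: replace $\gamma$ by the $P$-invariant closed curve $\Gamma=\bigcup_{i=1}^{k}P^{i}(\gamma)$ and sweep it by the flow. The paper's own proof is a single sentence (``Continuing the curve $\bigcup_{i=1}^{k}P^{i}(\gamma)$ along the flow gives a two dimensional torus, as required''), so there is no additional content there to compare against.

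What you add is genuine rigor the paper leaves implicit: the identification $\mathcal{T}=\{\Phi_t(w):w\in\Gamma,\ t\in\mathbb{R}\}$, the explicit suspension/mapping-torus model $\mathcal{S}$ with the first-return time, the injectivity argument for $\psi$ ruling out self-intersections, and the Euler-characteristic/orientability step excluding the Klein bottle. These are exactly the points one would have to supply to turn the paper's assertion into a proof, and your identification of the injectivity of $\psi$ as the delicate step is apt. Your alternative suggestion of showing $P|_{\Gamma}$ is orientation-preserving directly (via the flow-box structure transverse to $\Sigma$) is also a clean way to avoid invoking non-embeddability of the Klein bottle in $\mathbb{R}^3$.
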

\begin{proof}
Continuing the curve $\bigcup_{i=1}^{k}  P^i(\gamma)$ along the flow gives a two dimensional torus, as required.
\end{proof}

%%TCIDATA{Version=5.00.0.2606}
%%TCIDATA{LaTeXparent=0,0,MMFedit.tex}
%                      

\section{Applications} \label{sec:examples}
In this Section we apply our methods to two explicit examples. 
The first is the Van der Pol system
with periodic external forcing, where we prove the existence of smooth Lipschitz 
tori by means of the tools from Section \ref{sec:contractiing-maps}. The second example 
is an autonomous vector field introduced by Langford \cite{Langford} which exhibits a 
 \correction{comment 17}{Neimark-Sacker bifurcation}.
 For this system we establish the existence of $C^0$ tori by 
means of the tools from Section \ref{sec:homoclinic-maps}. An interesting aspect of the 
second example is that the tori are
neither differentiable nor Lipschitz, so that $C^0$ is in fact the most that can be established.
\correction{comment 37}{In all our computer assisted proofs we have used the CAPD\footnote{Computer Assisted Proofs in Dynamics: http://capd.ii.uj.edu.pl/} library.}

\subsection{Regular tori for the time dependent Van der Pol system}
%\marginpar{I was a bit torn here. Usually one writes Van der Pol system,
% but the name is van der Pol, with small `v'. I used capital `V' when referring 
%to the system and small `v' when referring to the name... not sure if this is the best idea...}
In this Section we apply the methods from Sections \ref%
{sec:contractiing-maps}, \ref{sec:tori-3d} to establish the existence of $C^k$ and
Lipschitz tori in a periodically forced nonlinear oscillator.
For our example application we consider the Van der Pol \correction{comment 32}{equation}
with periodic forcing%
\begin{equation*}
x^{\prime \prime }-v(1-x^{2})x^{\prime }+x-\varepsilon \cos \left( t\right)
=0.
\end{equation*}
The system is a canonical example in dynamical systems theory going back to 
its introduction by Balthasar van der Pol in 1920 as a mathematical 
model for an electrical circuit containing a vacuum tube \cite{vanDerPol_1}.
For almost a century the system has been studied as a simple and 
physically relevant example of a differential equation 
exhibiting spontaneous nonlinear oscillations.
Later van der Pol himself considered the circuit when driven by a periodic
external forcing \cite{vanDerPolNature_forced}, and saw
what would today be called an attracting invariant torus.  
For a much more complete theoretical discussion of the 
dynamics of the forced Van der Pol system, 
as well as a thorough review of the literature,
we refer to the classic study of  \cite{gukenheimerVanDerPol}.
The interested reader is referred also to the works of    
\cite{deClassified,vanDerPolPlasma} for interesting applications of 
the forced system.

%-----------------
%
%\begin{itemize}
%\item It would be good to have some introduction to the problem.
%
%\item There seems to have been some attention attracting this problem in the
%50s-70s. I gathered some sources and placed them on dropbox
%
%\item If we fix a $v$ and increase $\varepsilon $, then quite close to zero
%the torus dissapears. It would be good to investigate why this happens.
%\end{itemize}
%
%-----------------

We prove the existence of a smooth and attracting invariant torus 
for the following pairs of parameters%
\begin{equation}
\left( v,\varepsilon \right) \in \left\{ \left( 0.1,0.002\right) ,\left(
0.2,0.005\right) ,\left( 0.3,0.01\right) ,\left( 0.4,0.015\right) ,\left(
0.5,0.05\right) ,\left( 1,0.1\right) \right\} .  \label{eq:VanDerPol-params}
\end{equation}%
To do so we consider the system in the extended
phase space%
\begin{eqnarray}
x^{\prime } &=&y,  \notag \\
y^{\prime } &=&v(1-x^{2})y-x+\varepsilon \cos \left( t\right) ,
\label{eq:Van-der-Pol-extended} \\
t^{\prime } &=&1,  \notag
\end{eqnarray}%
and take the time section $\Sigma =\left\{ t=0\right\} $. We consider
the time shift map $f^{v,\varepsilon }:\Sigma \rightarrow \Sigma $ defined
as $f^{v,\varepsilon }\left( x,y\right) =\Phi _{2\pi }^{v,\varepsilon
}\left( x,y,0\right) $, where $\Phi _{s}^{v,\varepsilon }$ is the flow
induced by (\ref{eq:Van-der-Pol-extended}).

For each pair $\left( v,\varepsilon \right) $ of parameters we take a
sequence of points $\left\{ p_{i}^{v,\varepsilon }\right\}
_{i=1}^{N^{v,\varepsilon }}\subset \Sigma $, which lie approximately on the
intersection of the torus with $\Sigma $. We draw these points in Figure \ref%
{fig:VenDerPol}. (These points are computed numerically. We choose them so
that they are roughly uniformly spread along the curves.)

We then choose local coordinates $\gamma _{i}^{v,\varepsilon }:\left[
-R_{i}^{v,\varepsilon },R_{i}^{v,\varepsilon }\right] \times \left[
-r^{v,\varepsilon },r^{v,\varepsilon }\right] \rightarrow \mathbb{R}^{2}$ as 
\begin{equation*}
\gamma _{i}^{v,\varepsilon }\left( x,y\right) =q_{i}^{v,\varepsilon
}+A_{i}^{v,\varepsilon }\left( 
\begin{array}{c}
x \\ 
y%
\end{array}%
\right) \qquad \text{for }i=1,\ldots ,N^{v,\varepsilon },
\end{equation*}%
with%
\begin{eqnarray*}
q_{1}^{v,\varepsilon } &=&\frac{1}{2}\left( p_{2}^{v,\varepsilon
}+p_{N}^{v,\varepsilon }\right) , \\
q_{N}^{v,\varepsilon } &=&\frac{1}{2}\left( p_{1}^{v,\varepsilon
}+p_{N-1}^{v,\varepsilon }\right) , \\
q_{i}^{v,\varepsilon } &=&\frac{1}{2}\left( p_{i+1}^{v,\varepsilon
}+p_{i-1}^{v,\varepsilon }\right) ,\qquad \text{for }i=2,\ldots
,N^{v,\varepsilon }-1,
\end{eqnarray*}\correction{comment 33}{
\begin{eqnarray*}
A_{1}^{v,\varepsilon } &=&\frac{1}{R_1^{v,\varepsilon }}\left( 
\begin{array}{ll}
\pi _{x}\frac{1}{2}\left( p_{2}^{v,\varepsilon }-p_{N}^{v,\varepsilon
}\right) & -\pi _{y}\frac{1}{2}\left( p_{2}^{v,\varepsilon
}-p_{N}^{v,\varepsilon }\right) \\ 
\pi _{y}\frac{1}{2}\left( p_{2}^{v,\varepsilon }-p_{N}^{v,\varepsilon
}\right) & \pi _{x}\frac{1}{2}\left( p_{2}^{v,\varepsilon
}-p_{N}^{v,\varepsilon }\right)%
\end{array}%
\right) , \\
A_{N}^{v,\varepsilon } &=&\frac{1}{R_{N}^{v,\varepsilon }}\left( 
\begin{array}{ll}
\pi _{x}\frac{1}{2}\left( p_{1}^{v,\varepsilon }-p_{N-1}^{v,\varepsilon
}\right) & -\pi _{y}\frac{1}{2}\left( p_{1}^{v,\varepsilon
}-p_{N-1}^{v,\varepsilon }\right) \\ 
\pi _{y}\frac{1}{2}\left( p_{1}^{v,\varepsilon }-p_{N-1}^{v,\varepsilon
}\right) & \pi _{x}\frac{1}{2}\left( p_{1}^{v,\varepsilon
}-p_{N-1}^{v,\varepsilon }\right)%
\end{array}%
\right) , \\
A_{i}^{v,\varepsilon } &=&\frac{1}{R_{i}^{v,\varepsilon }}\left( 
\begin{array}{ll}
\pi _{x}\frac{1}{2}\left( p_{i+1}^{v,\varepsilon }-p_{i-1}^{v,\varepsilon
}\right) & - \pi _{y}\frac{1}{2}\left( p_{i+1}^{v,\varepsilon
}-p_{i-1}^{v,\varepsilon }\right) \\ 
\pi _{y}\frac{1}{2}\left( p_{i+1}^{v,\varepsilon }-p_{i-1}^{v,\varepsilon
}\right) & \pi _{x}\frac{1}{2}\left( p_{i+1}^{v,\varepsilon
}-p_{i-1}^{v,\varepsilon }\right)%
\end{array}%
\right) ,
\end{eqnarray*}}
for $ i=2,\ldots ,N^{v,\varepsilon }-1,$ and%
\begin{eqnarray*}
R_{1}^{v,\varepsilon } &=&\left\Vert \frac{1}{2}\left( p_{2}^{v,\varepsilon
}-p_{N}^{v,\varepsilon }\right) \right\Vert , \\
R_{N}^{v,\varepsilon } &=&\left\Vert \frac{1}{2}\left( p_{1}^{v,\varepsilon
}-p_{N-1}^{v,\varepsilon }\right) \right\Vert , \\
R_{i}^{v,\varepsilon } &=&\left\Vert \frac{1}{2}\left(
p_{i+1}^{v,\varepsilon }-p_{i-1}^{v,\varepsilon }\right) \right\Vert ,\qquad 
\text{for }i=2,\ldots ,N^{v,\varepsilon }-1.
\end{eqnarray*}%
The choice is motivated by the fact that the set $\gamma _{i}\left( \left[
-R_{i}^{v,\varepsilon },R_{i}^{v,\varepsilon }\right] \times \left\{
0\right\} \right) $ is a line connecting the points $p_{i-1}^{v,\varepsilon
} $ and $p_{i+1}^{v,\varepsilon }$. As a result we obtain overlapping sets $%
U_{i}^{v,\varepsilon }:=\gamma _{i}\left( \left[ -R_{i}^{v,\varepsilon
},R_{i}^{v,\varepsilon }\right] \times \left[ -r^{v,\varepsilon
},r^{v,\varepsilon }\right] \right) $ which cover the true invariant curve
for $f^{v,\varepsilon }$. We establish the existence of the curve using the
method outlined in Section \ref{sec:Lip-curves-validation}. We outline some of the aspects of our computer assisted proof below.

For the first five parameters from (\ref{eq:VanDerPol-params}) it turned out
to be enough to consider $N^{v,\varepsilon }=1000$. For these five
parameters we have chosen $r^{v,\varepsilon }=5\cdot 10^{-4}$, and we have
chosen the slope of the cones (\ref{eq:cones-Lip-torus}) as $a_{i}=0.3$. Each set $U_i$ was additionally subdivided into
 $m=6$ parts for the validation of condition (\ref{eq:fragment-contraction}).
This condition was the most time consuming part of the proof. The computer
assisted proof for each of the four tori took under a minute and a half on a
single 3 GHz Intel i7 Core processor. \correction{comment 34}{(The parameters $r_i,a_i$ and $m$ were chosen by trial and error.)}

The validation of (\ref{eq:fragment-contraction}) was based on (\ref%
{eq:contraction-lip-curve-validation}) so as a 
\correction{comment 15}{byproduct}
 from (\ref%
{eq:local-map-derivative}) we obtained bounds on the derivatives of the
local maps, which allows us to compute the bounds $\mu ,\xi $ needed for
the rate conditions (see Definition \ref{def:rate-cond}). By using Theorem %
\ref{th:Ck} we validate the $C^{k}$ regularity for the torus at parameter
pairs $\left( v,\varepsilon \right) =\left( 0.1,0.002\right) ,\left(
0.2,0.005\right) ,\left( 0.3,0.01\right) $ as $k=9,5,2$, respectively. For
the remaining parameters from (\ref{eq:VanDerPol-params}) we only obtained
that the tori are Lipschitz. This is due to the fact that the higher the $v$
the less `uniform' the dynamics on the torus. What we mean by this is that
there are regions on the torus in which the dynamics restricted to the torus
is expanding or contracting (the torus does not behave uniformly as a
central coordinate). This affects the bounds on parameters $\mu ,\xi $ (the
second parameter in particular) which results in weaker regularity bounds
obtained from our method.

Non-uniformity of the dynamics on the torus for higher $v$
has also made the proof for the parameters $\left( v,\varepsilon \right)
=\left( 1,0.1\right) $ more computationally demanding. We  take $%
N^{v,\varepsilon }=5000$ and $m=20$, covering the curve with
a larger number of fragments. We also take $r^{v,\varepsilon }=2\cdot
10^{-5} $, and the slope of the cones (\ref{eq:cones-Lip-torus}) were taken
as $a_{i}=0.1$. With $a_i$ a larger number, but using smaller 
sets $U_{i}$ allows us to validate the needed conditions in this example.
%\marginpar{ Did you mean to say $a_{i}$ here, sentence needs to be rewritten.}. 
The computer assisted proof
for this parameter pair took 31 minutes on a single 3GHz Intel i7 Core
processor.

This demonstrates the following weakness of our method. It performs well if
the dynamics on the torus is uniform. If it is not, then proofs require many
subdivisions. In the next Section we consider another example in which this
problem is even more visible.

\correction{comment 5}{
\begin{remark}
In the computer assisted proof we can use a small interval of parameters instead of a single parameter value. By invoking parallel computations on a cluster, one could use our approach to cover whole parameter ranges.
\end{remark}}

We finish with the comment that by Theorem \ref{th:Lip-torus} the invariant curves established for the map $%
f^{v,\varepsilon }$ lead to two dimensional invariant tori of (\ref%
{eq:Van-der-Pol-extended}).

\begin{figure}[tbp]
\begin{center}
\includegraphics[height=5.5cm]{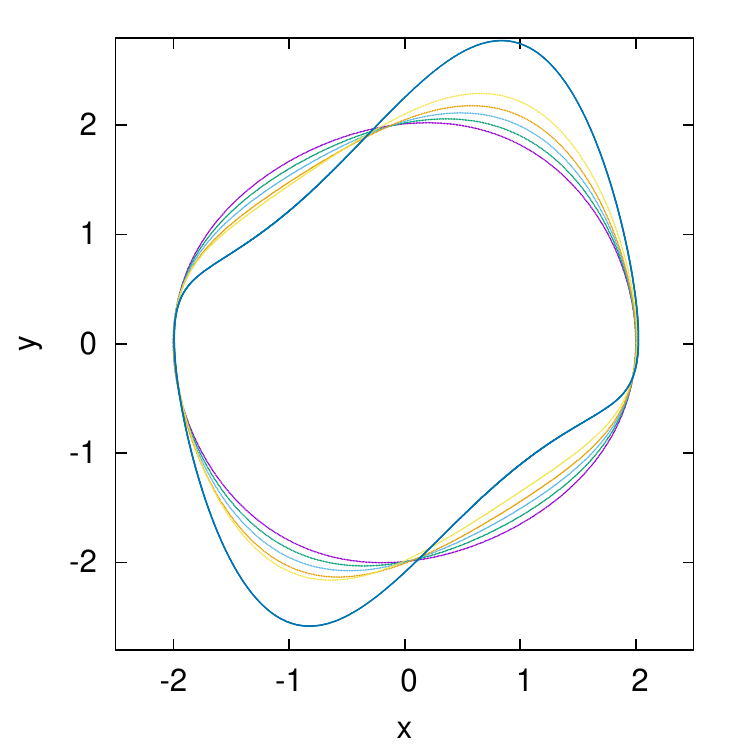}
\end{center}
\caption{{}Intersections of the invariant Lipschitz tori for the Van der Pol
system with the $t = 0$ section for each parameter from (\protect\ref{eq:VanDerPol-params}). 
The smaller the $\protect\mu $ the more circular/smooth the curve.}
\label{fig:VenDerPol}
\end{figure}

\subsection{Resonant tori in the Langford system}

Consider the autonomous vector field $F:\mathbb{R}%
^{3}\rightarrow \mathbb{R}^{3}$ given by the formula 
\begin{equation*}
F(x,y,z)=%
\begin{pmatrix}
(z-\beta )x-\delta y \\ 
\delta x+(z-\beta )y \\ 
\gamma +\alpha z-\frac{z^{3}}{3}-(x^{2}+y^{2})(1+\epsilon z)+\zeta zx^{3}%
\end{pmatrix}%
,
\end{equation*}%
where $\epsilon =0.25$, $\gamma =0.6$, $\delta =3.5$, $\beta =0.7$, $\zeta
=0.1$, and $\alpha =0.95$ are the `classical' parameter
values. This system is 
a toy model for dissipative vortex dynamics, or for a rotating viscus fluid,
and was first studied by Langford in \cite{Langford}. 
%For $p \in \mathbb{R}^3$, solving $f(p)=0$, yields three equilibrium solutions for the "classical" parameter values, one unstable focus $p_0$, one stable focus $p_1$, and one attracting equilibrium $p_2$. 
We define a section $\Sigma =\left\{ x=0\right\} $, and the first return
time section to section map $P:\Sigma \rightarrow \Sigma $. 
%We compute the fixed point of the Poincar\'e map which yielded periodic orbit $\gamma$ in our ODE for the "classical" parameter values. 

%\begin{figure}[ptb]
%\begin{center}
%\includegraphics[width=1\textwidth,height = .35\textwidth]{fig_aizawa_11.pdf}
%\end{center}
%\caption{(Right) The Aizawa attractor for the "classical" parameter values $\epsilon = 0.25$, $\gamma = 0.6$, $\delta = 3.5$, $\beta = 0.7$, $\zeta = 0.1$,
%and $\alpha = 0.95$ . 
%(Left) Phase space geography of the Aizawa system which main featureFs are
%the periodic orbit $\gamma$, and the unstable saddle focus at $p_0$. For the "classical" parameter values we also have 2 other equilibrium points $p_1, p_2 \in \mathbb{R}^3$ \cite{Aizawa}. 
% }
%\label{fig:Aizawa}
%\end{figure}

We treat the parameter $\alpha $ as our bifurcation parameter. For all 
$\alpha \in \lbrack 0,0.95]$ there exists a fixed point in $\Sigma $ of $%
P^{2}$ which corresponds to a periodic orbit $\tau $ of the ODE. The
periodic orbit has complex conjugate Floquet multipliers which are stable
for small $\alpha $ but which later cross the unit circle, loosing stability
in a \correction{comment 17}{Neimark-Sacker bifurcation} \cite{Aizawa}, which occurs at $%
\alpha \approx 0.69714$ and gives birth to a $C^{k}$ torus. We give a plot
of such torus for one of the parameters in Figure \ref{fig:Aizawa075}.

\begin{figure}[tbp]
\begin{center}
\includegraphics[width=1\textwidth,height=4cm]{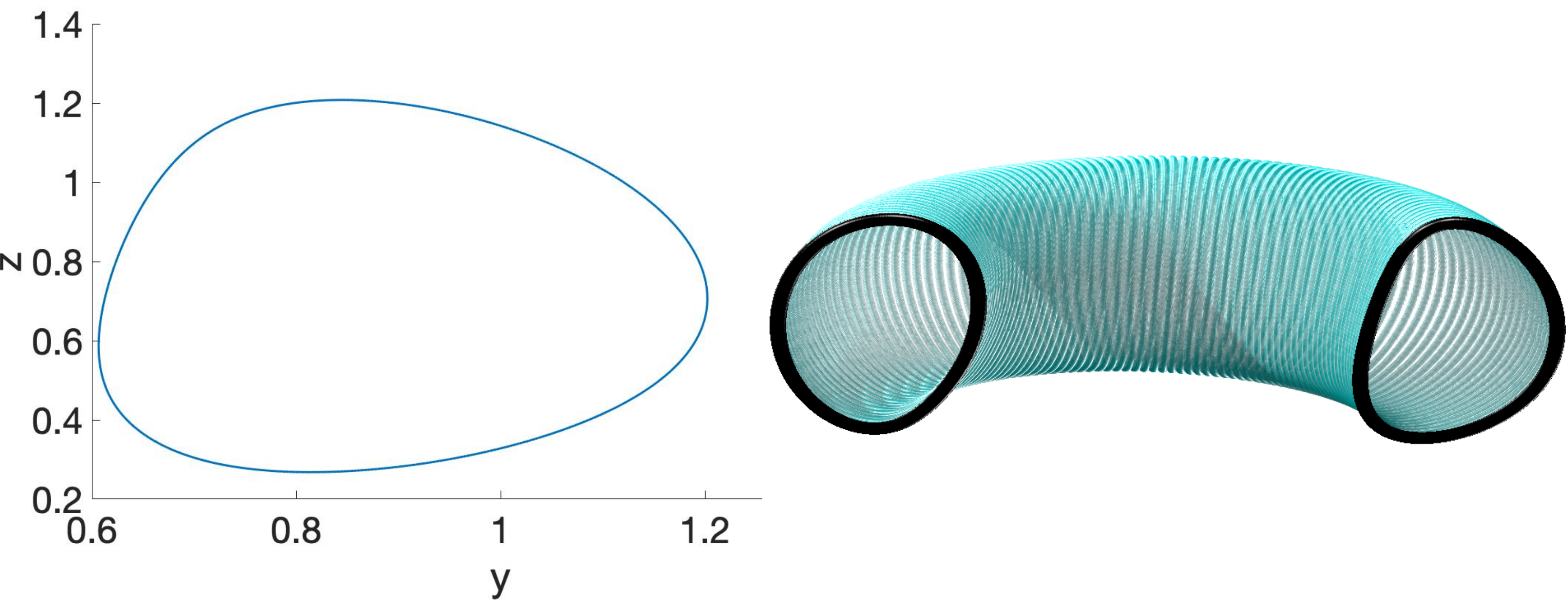}
\end{center}
\caption{At $\protect\alpha =0.75$ we have an attracting limit cycle of $P^2$ on $%
\Sigma $ (figure on the left) which is the intersection of the two
dimensional $C^k$ torus of the ODE with $\Sigma \cap \{y>0\}$. On the right
we plot half of the torus. In black we have both components of the torus
intersection with $\Sigma$; one for $y<0$ and the other for $y>0$.}
\label{fig:Aizawa075}
\end{figure}

%\marginpar{
%In Figure \ref{fig:Aizawa075} the shape on the left does not match the
%shapes on the right?}

Additionally there exist two period six orbits of $P$ in $\Sigma .$ One is a
saddle periodic orbit which we denote as $c_{h}$, and the other is a stable
focus periodic orbit, which we denote as $c_{s}$. (We use the subscript $h$ to
stand for `hyperbolic' and $s$ to stand for `stable'.) We found that one
branch of $W^{u}(c_{h})$ wraps around the torus, while the other reaches $%
c_{s}$; see left plot in Figure \ref{fig:Aizawa4}. This happens right until $%
\alpha \approx 0.822$.

As we increase further our bifurcation parameter $\alpha $, our invariant
two-dimensional $C^{k}$ torus bifurcates to a 
$C^{0}$ torus. This bifurcation happens by $c_{h},c_{s}$ colliding with the
torus. Another way of interpreting this is by looking at what happens with $%
W^{u}(c_{h})$ and $W^{s}(c_{h})$. Before the bifurcation one branch of $%
W^{u}(c_{h})$ goes inside, wrapping around the torus; see left plot from
Figure \ref{fig:Aizawa3}. After the bifurcation both branches of $%
W^{u}(c_{h})$ lead to $c_{s}$; see right plot from Figure \ref{fig:Aizawa3}
and also Figure \ref{fig:Aizawa5}. Since after the bifurcation the tori
include the periodic orbits, we refer to them as \emph{resonant tori}.

\begin{figure}[tbp]
\begin{center}
\includegraphics[width=0.485\textwidth,height
=0.42\textwidth]{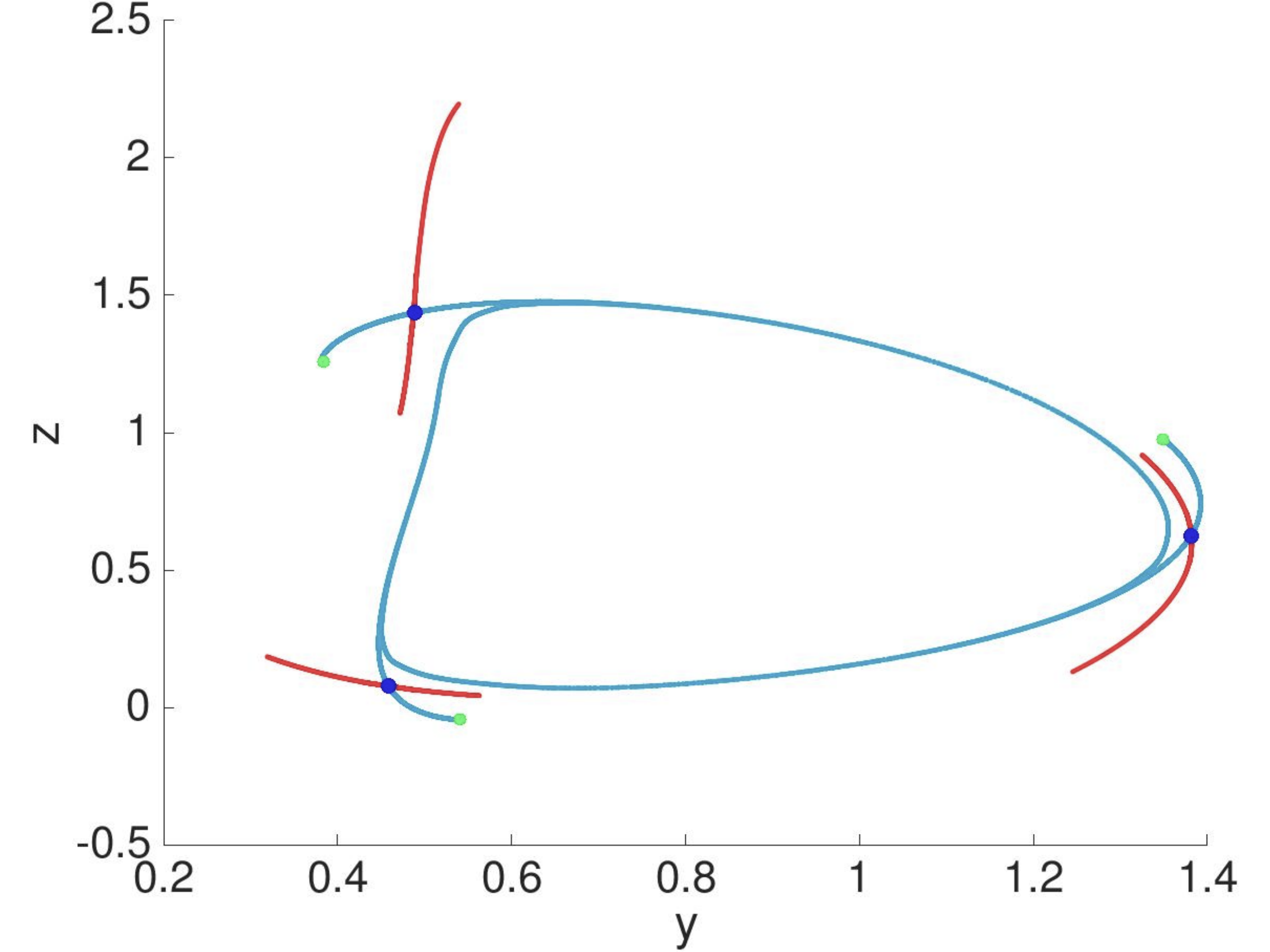}\quad \includegraphics[width=0.485%
\textwidth,height =0.42\textwidth]{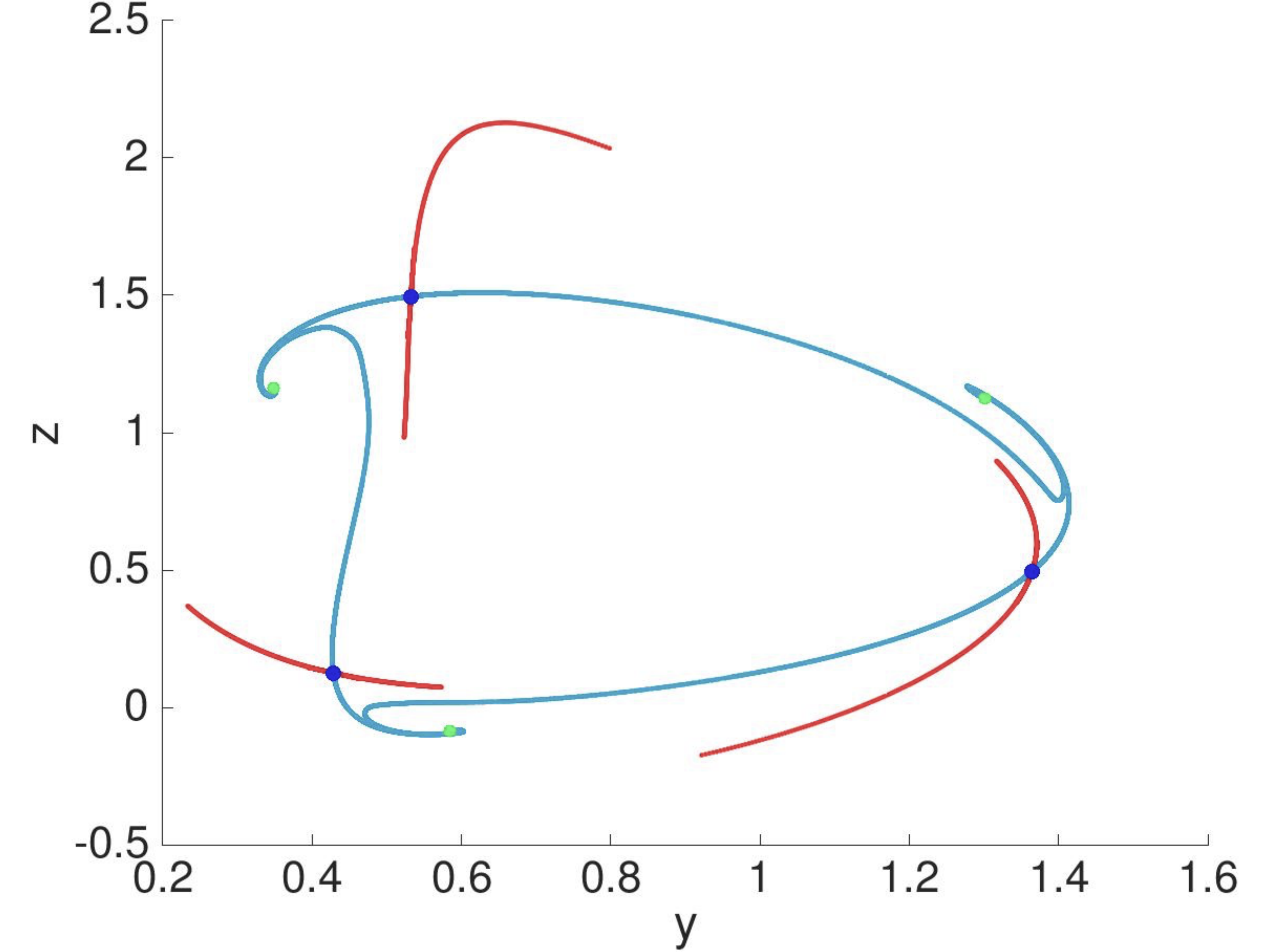}
\end{center}
\caption{The plot of the periodic orbits $c_{h}$ and $c_{s}$ on $\Sigma \cap
\{y>0\}$. (The orbits are of period $6$, but we plot only half of the points
with $y>0$.) The hyperbolic orbit $c_{u}$ is in blue, and the attracting
orbit $c_{s}$ is in green. The manifold $W^{s}(c_{h})$ is in red and $%
W^{u}(c_{h})$ is blue. (Left) at $\protect\alpha =0.815$, we see that a
branch of $W^{u}(c_{h})$ goes inside and wraps around the attracting
invariant circle. (Right) at $\protect\alpha =0.835$, we observe that a
branch of $W^{u}(c_{h})$ goes to the other side of $W^{s}(c_{h})$ and gets
caught in the basin of attraction of $c_{s}$.}
\label{fig:Aizawa3}
\end{figure}

%\begin{figure}[tbp]
%\begin{center}
%\includegraphics[width=0.485\textwidth,height
%=0.42\textwidth]{fig_aizawa_0815.pdf}\quad \includegraphics[width=0.485%
%\textwidth,height =0.42\textwidth]{fig_aizawa_0835.pdf}
%\end{center}
%\caption{Colors have the same meaning as in Figure \protect\ref{fig:Aizawa3}. (Left) at $\protect\alpha =0.815$, we have plotted $W^{u}(c_{h})$ and $W^{s}(c_{h})$
%for all the points from $c_{h}$  on $\Sigma \cap
%\{y>0\}$. (Right) We did the same for $\protect\alpha =0.835$.}
%\label{fig:Aizawa6}
%\end{figure}
%\marginpar{
%In figure \ref{fig:Aizawa3} I would include $W^{u}(c_{h})$ and $W^{s}(c_{h})$
%for all the points from $c_{h}$. In Figure \ref{fig:Aizawa4} we are missing
%the point from $c_{h}$ on the right. In Figure \ref{fig:Aizawa5} we should
%use the same colours as in Figures \ref{fig:Aizawa3}, \ref{fig:Aizawa4}.}

\begin{figure}[tbp]
\begin{center}
\includegraphics[width=1\textwidth,height
=0.52\textwidth]{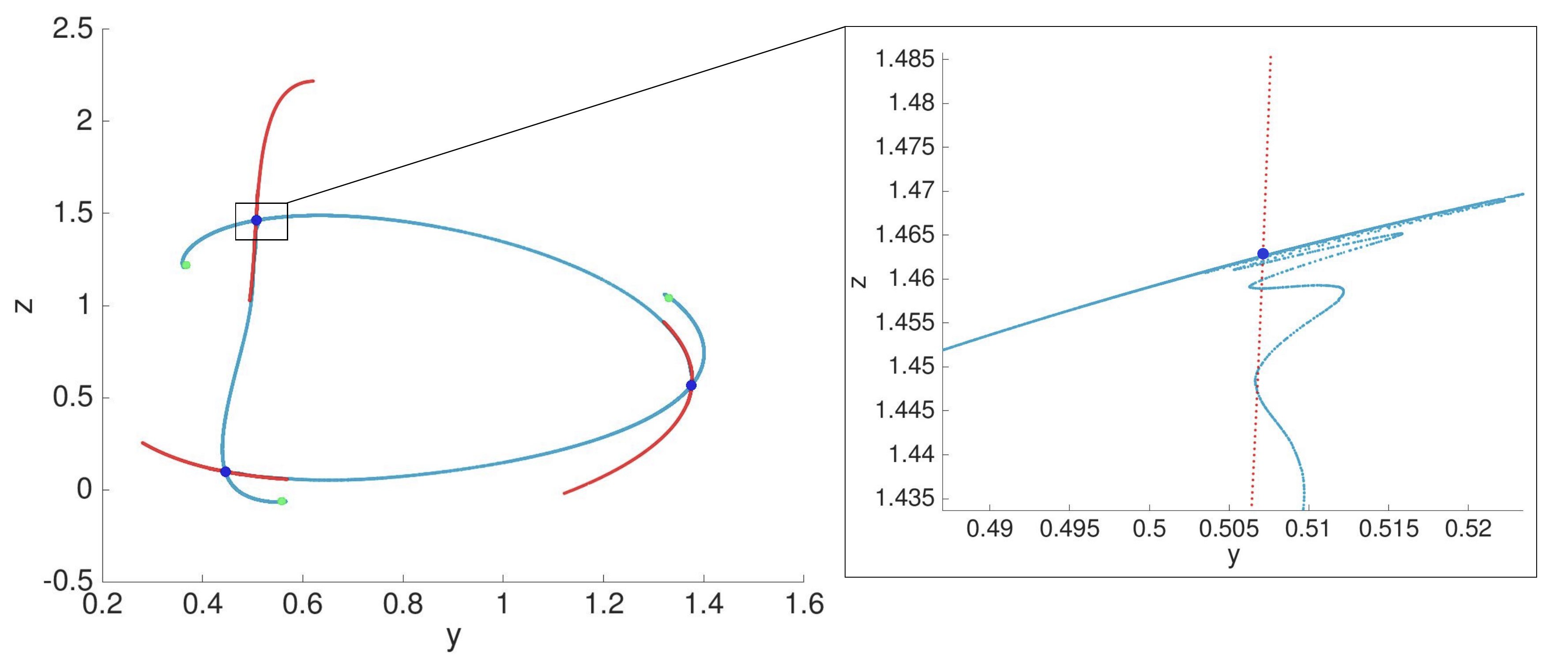}
\end{center}
\caption{Colors have the same meaning as in Figure \protect\ref{fig:Aizawa3}%
. At $\protect\alpha =0.8225$ we have transverse intersections of $%
W^{u}(c_{h})$ and $W^{s}(c_{h})$ which leads to chaotic dynamics.}
\label{fig:Aizawa4}
\end{figure}

For parameters between the case of $C^{k}$ tori and the case of resonant
tori we have transverse intersections of $W^{u}(c_{h})$ and $W^{s}(c_{h})$
as seen in Figure \ref{fig:Aizawa4}. This transverse intersection leads to
the presence of Smale horseshoes and thus chaotic dynamics. These transverse
intersections are born and terminated at parameters for which we have
tangential intersections of $W^{s}(c_{h})$ with $W^{u}(c_{h})$.

%\begin{figure}[ptb]
%\begin{center}
%\includegraphics[width=1\textwidth,height =0.4\textwidth]{fig_aizawa_12.pdf}
%\end{center}
%\caption{ {\bf Phase portrait of the dynamics of the Aizawa system surrounding the contracting invariant tori for $\alpha=0.81$}. The saddle orbit is blue while the stable orbit is red. The magenta orbit is the repelling fixed point of our Poincar\'e map.  The black curves represent the unstable manifold of the saddle cycle on the Poincar\'e section. The half torus colored in blue rings is obtained by integrating the invariant set from the our Poincar\'e section (figure on the left) for half the period of the unstable orbit. One side of the unstable manifold of the saddle cycle accumulates on the attracting invariant circle (red), while the other side converges to the attracting period 3 orbit.  }
%\label{fig:Aizawa2}
%\end{figure}

%\begin{figure}[ptb]
%\begin{center}
%\includegraphics[width=1\textwidth,height =0.4\textwidth]{fig_aizawa_15.pdf}
%\end{center}
%\caption{ {\bf Resonance torus for $\alpha=0.90$}. Colors have the same meaning as in \ref{fig:Aizawa2}.  The left frame recalls the invariant set in the Poincar\'e section which is a resonance torus formed by two period three orbits. The right frame presents the corresponding invariant set in phase space.}
%\label{fig:Aizawa5}
%\end{figure}

\begin{figure}[tbp]
\begin{center}
\includegraphics[width=1\textwidth,height =0.4\textwidth]{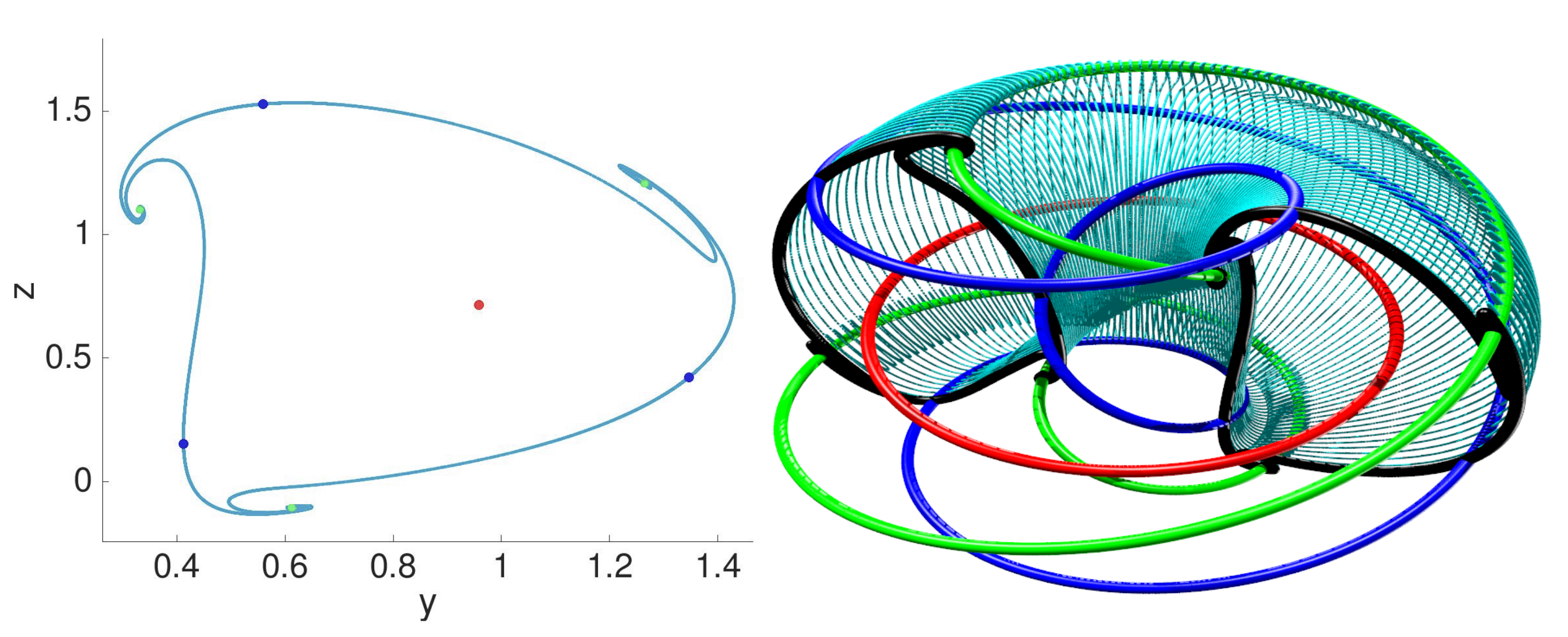}
\end{center}
\caption{A resonance torus for $\protect\alpha =0.85$. In red we plot the
periodic orbit from which the tori have initially originated through the
Hopf type bifurcation. }
\label{fig:Aizawa5}
\end{figure}

%Finally, increasing our bifurcation parameter $\alpha$ even further to $%
%\alpha=0.95$, we get the famous Aizawa attractor as seen in ~\ref{fig:Aizawa}.
%\marginpar{
%I am not sure if we need to plot the attractor. It is not connected with our
%story. I removed it for now.}

%\begin{figure}[ptb]
%\begin{center}
%\includegraphics[width=0.5\textwidth,height =
%.35\textwidth]{fig_aizawa_095.pdf}
%\end{center}
%\caption{The Aizawa attractor for the ``classical" parameter values $\protect%
%\epsilon = 0.25$, $\protect\gamma = 0.6$, $\protect\delta = 3.5$, $\protect%
%\beta = 0.7$, $\protect\zeta = 0.1$, and $\protect\alpha = 0.95$. }
%\label{fig:Aizawa}
%\end{figure}

Our objective is to apply the tools from Section \ref%
{sec:heteroclinic-subsection} and prove the existence of a resonant tori. Below we
describe the proof when 
$\alpha =0.85$.  The resonant torus in our proof 
is depicted in Figure \ref{fig:Aizawa5}. 

\begin{remark}
We emphasize that the resonant torus from Figure \ref{fig:Aizawa5} is
continuous, but \emph{not} $C^{k}$ for $k>0$. In fact, it is not even
Lipschitz due to the rotation around the attractinmg periodic orbit.
\end{remark}

We now outline the details of the proof. 
Consider the map $f:\mathbb{R}^{2}\rightarrow \mathbb{R}^{2}$ 
\begin{equation*}
f:=P^{6}.
\end{equation*}%
First we find bounds on two fixed points $%
p_{1}^{\ast },p_{2}^{\ast }\in \Sigma $ of $f$, for which $p_{1}^{\ast }\in
c_{s}$ and $p_{2}^{\ast }\in c_{h}$. We do this using the following shooting
method. Consider%
\begin{equation*}
G:\underset{6}{\underbrace{\mathbb{R}^{2}\times \ldots \times \mathbb{R}^{2}%
}}\rightarrow \mathbb{R}^{12},
\end{equation*}%
defined as%
\begin{equation*}
G\left( q_{1},\ldots ,q_{6}\right) :=\left( P\left( q_{6}\right)
-q_{1},P\left( q_{1}\right) -q_{2},P\left( q_{2}\right) -q_{3},\ldots
,P\left( q_{5}\right) -q_{6}\right) .
\end{equation*}%
(Note that $q_{i}=\left( y_{i},z_{i}\right) \in \Sigma $, for $i=1,\ldots 6$%
.) Establishing that%
\begin{equation*}
G\left( q_{1},\ldots ,q_{6}\right) =0,
\end{equation*}%
gives points $q_{1},\ldots ,q_{6}$ on a period $6$ orbit of $P$.

We using the interval Newton method (Theorem \ref{th:interval-Newton}) to
validate that the point $\left( q_{1},\ldots ,q_{6}\right) $ is in $%
\prod_{i=1}^{12}I_{i}$, for some closed intervals $I_{i}$, for $i=1,\ldots
,12$. Once this is done, the two dimensional box $I_{1}\times I_{2}$ is
an enclosure of $q_{1}$ which is a fixed point of $P^{6}.$ We find that
\begin{eqnarray*}
p_{1}^{\ast } &\in &\left( 
\begin{array}{r}
0.611160359286522+4.6\cdot 10^{-13}\cdot \left[ -1,1\right] \\ 
-0.104496536895459+8.8\cdot 10^{-13}\cdot \left[ -1,1\right]%
\end{array}%
\right) , \\
p_{2}^{\ast } &\in &\left( 
\begin{array}{c}
0.413216691560642+2.5\cdot 10^{-13}\cdot \left[ -1,1\right] \\ 
0.150271844775546+9.4\cdot 10^{-13}\cdot \left[ -1,1\right]%
\end{array}%
\right) .
\end{eqnarray*}

Now take the two $2\times 2$ matrices 
\begin{eqnarray*}
A_{1} &:&=\left( 
\begin{array}{ll}
0.953174 & -0.0468255 \\ 
0.169128 & 0.2639%
\end{array}%
\right) , \\
A_{2} &:&=\left( 
\begin{array}{ll}
0.0138304 & -1 \\ 
-1 & 0.674926%
\end{array}%
\right),
\end{eqnarray*}%
and define the local maps $f_{1}$ and $f_{2}$ around $p_{1}^{\ast }$
and $p_{2}^{\ast }$, respectively, as%
\begin{equation*}
f_{i}\left( q\right) :=A_{i}^{-1}\left( f\left( A_{i}q+p_{i}^{\ast }\right)
-p_{i}^{\ast }\right) \qquad \text{for }i=1,2.
\end{equation*}%
Note that $p_{1}^{\ast }$ and $p_{2}^{\ast }$ are shifted to zero in their
respective local coordinates. The choices of $A_{1}$ and $A_{2}$ are such
that they put the derivatives of $f_{1}$ and $f_{2},$ respectively, at zero
approximately in Jordan form. Such $A_{1}$ and $A_{2}$ are computed using standard
numerics (we do not need interval arithmetic validation at this stage). 
%This is because a
%change of coordinates that gives an approximate Jordan form is enough for
%our needs.

We consider the two cubes $B^{1}$ and $B^{2}$ in $\mathbb{R}^{2}$ defined by 
\begin{eqnarray*}
B^{1} &:=&[-0.0005,0.0005]\times \lbrack -0.0005,0.0005], \\
B^{2} &:=&[-0.0001,0.0001]\times \lbrack -2\cdot 10^{-8},2\cdot 10^{-8}].
\end{eqnarray*}%
With computer assistance we established that zero is an attracting fixed
point of $f_{1}$ in $B^{1}$. This was done in interval arithmetic by using
Lemma \ref{lem:contraction-main}. We also established that the unstable
manifold of zero for the map $f_{2}$ is contained in the cone $Q(0)$ of the
form (\ref{eq:cone-def}) with $L=2\cdot 10^{-4}$. We did this by using Lemma %
\ref{lem:wu}. The validation of the assumptions of Lemmas \ref%
{lem:contraction-main} and \ref{lem:wu} was based on interval arithmetic
bounds on the derivative of the map. Here we write out the bounds we have obtained: 
\begin{eqnarray*}
\left[ Df_{1}\left( B^{1}\right) \right] &=&\left( 
\begin{array}{ll}
\lbrack 0.150243,0.220614] & [-0.561824,-0.521109] \\ 
\lbrack 0.41934,0.663593] & [0.10723,0.263629]%
\end{array}%
\right) , \\
\left[ Df_{2}\left( B^{2}\right) \right] &=&\left( 
\begin{array}{ll}
\lbrack 2.16813,2.16975] & [-0.000485,0.000485] \\ 
\lbrack -0.000352,0.000351] & [0.195584,0.195806]%
\end{array}%
\right) .
\end{eqnarray*}

We now consider%
\begin{equation*}
\bar{x}=4.5\cdot 10^{-5}.
\end{equation*}%
With computer assistance we have validated that $\pi _{u}f_{2}\left(
Q(0) \cap \{p:\pi_x p=\bar{x}\}\right) \subset I:=[\bar{x},0.0001]$ and that for $%
n=25$ 
\begin{equation}
A_{1}^{-1}\left( f^{n}\left( A_{2} \left(
Q(0) \cap \{p:\pi_x p\in I\}\right)
+p_{2}^{\ast }\right) -p_{1}^{\ast }\right) \subset B^{1}.
\label{eq:fundamental-domain-condition}
\end{equation}%
This by Theorem \ref{th:connecting-curve} establishes the existence of an
invariant curve for $f$, which joins $p_{1}^{\ast }$ and $p_{2}^{\ast }$.
The resonant torus from Figure \ref{fig:Aizawa5} follows from Lemma
 \ref{lem:resonant-torus}.

\begin{remark}
The condition (\ref{eq:fundamental-domain-condition}) required $n=25$
iterates of the map $f$, which is $6n=150$ iterates of the map $P$; this
requires a long integration time of the ODE. This was the most time
consuming part of the computer assisted proof, since it required a
subdivision of $Q(0) \cap \{p:\pi_x p\in I\}$ into $200$ fragments and checking (%
\ref{eq:fundamental-domain-condition}) for each of them separately.
\end{remark}

The computer assisted proof of the resonant torus for $\alpha =0.85$ took
under $6$ minutes on a single 3GHz Intel i7 Core processor.

There is nothing particularly special about the parameter $\alpha =0.85$.
Using the same techniques, we have obtained proofs of resonant tori for other parameters,
including $\alpha =0.835$ for which we have the plot of the torus in the
right hand plot from Figure \ref{fig:Aizawa3}.

We finish this Section by commenting on difficulties we have encountered
when trying to validate the $C^{k}$ tori for smaller parameters $\alpha $.
We ran into these when considering for instance $\alpha =0.75$ for which the
torus is plotted in Figure \ref{fig:Aizawa075}. Judging by the shape of the
torus it would seem to be well suited for the validation
methods of Section \ref{sec:contractiing-maps}. Our problem in this
particular example is that the dynamics near the torus are not uniformly 
contracting. There
are some regions of expansion, and other regions of strong
contraction. In total the torus is an attractor, but it is not a uniform one
and the methods of Section \ref{sec:contractiing-maps} do not
apply. When $\alpha =0.75$ such uniform contraction is
achieved for $f=P^{16}$. We have been able to enclose the curve in a set $U$
which consists of $10000$ cubes and validate that $f$ is contracting in $U$
and that $f(U)\subset U$. (Such validation has been very time consuming and
took 5 hours and 27 minutes on a single 3GHz Intel i7 Core
processor.) This establishes the existence of an invariant set in $U$, but
does not prove that this set is a torus. Using the results from \cite{CK}
one obtains that this invariant set projects surjectively onto a torus, but
other than this we do not get any information about its topology.

To prove that the invariant set is a torus we would need to also validate
cone conditions. The fact that $f$ consists of $16$ iterates of $P$ leads to
long integration of the ODE. This resulted in insufficiently sharp estimates
on the derivative of $f$ and we were unable to validate cone conditions. An
additional difficulty we have encountered is that in the neighborhood of
the invariant curve of $f$ we do not have `vertical' contraction towards the
curve, but also strong twist dynamics. This makes the validation of cone
conditions even harder, since the angle between the center and the stable
bundles becomes very small. 

\correction{comment 36}{
To be more precise, if on the section $\Sigma$ we choose the tangent vector and 
the normal vector to the invariant circle as the basis for our coordinates, then for 
a point $p$ from the invariant circle the derivative of $f=P^{16}$ is of the form
\[
Df(p)=\left(
\begin{array}
[c]{cc}%
1 & \delta\\
0 & \lambda
\end{array}
\right),
\]
with $\left\vert \lambda\right\vert <1$, but $\left\vert \lambda\right\vert
\approx1$, and $\left\vert \delta\right\vert \gg1.$ This means that in order
to validate cone conditions, the fact that $(x,y)\in Q(0)$, i.e. $\left\vert
y\right\vert <a\left\vert x\right\vert $, should imply that $Df(p)(x,y)\in
Q(0)$, i.e. $\left\vert \lambda y\right\vert < a \left\vert x+y\delta\right\vert
$. The choice of $y=-\delta^{-1}x$ will result in zero on the right hand side,
which means that a necessary condition is to have $a\leq\left\vert
\delta\right\vert ^{-1}$. This means that in the case when $\delta$ is a large
number (we have a strong twist), we have to choose small $a$, which means that we need to use sharp cones. The smaller the $a$ the more difficult is the validation
of cone conditions. On top of that, we need a large number of iterates of $P$ to compute $f$, which leads to long integration times, resulting
in insufficiently accurate bounds on the derivatives of $f$ in order to
validate the cone conditions.
}

\correction{}{
We encounter exactly the same problem when the parameters of the
system are close to the Neimark-Sacker bifurcation. In such setting the torus is not strongly attracting, meaning that a large number of iterates of $P$ is needed for the contraction to be strong enough so that we can validate $f(U)\subset U$. This results in the appearance of a large twist parameter $\delta$ in derivatives of $f$, and we run into identical problems as those described above. }

This demonstrates that our method has limitations
in the presence of twist and nonuniform contraction of the invariant tori. Developing 
a computer assisted proof strategy which overcomes these difficulties would be an 
interesting future project.  Another interesting project would be to formulate functional 
analytic methods for studying rotational invariant tori which could possibly lead to
sharp or sharper regularity bounds.  

%\marginpar{Is the title of \cite{Aizawa} up to date? If it is on arxiv we could add a link.}

\section{Acknowledgements}
We would like to thank the anonymous Reviewers for their comments, suggestions and corrections, which helped us improve our paper.

\bibliographystyle{unsrt}
\bibliography{refs}

\medskip
% The data information below will be filled by AIMS editorial staff
Received xxxx 20xx; revised xxxx 20xx.
\medskip

\end{document}